  \definecolor{Lightgray}{RGB}{235,235,235}
\definecolor{orange}{rgb}{0.698,0.133,0.133} 
\definecolor{green}{rgb}{0.33,0.42,0.18} 
\definecolor{greenf}{rgb}{0.13,0.55,0.13} 
\definecolor{newcolor}{rgb}{0.556364, 0.367273, 0.0763636}
\definecolor{colorfive}{rgb}{1.,0.27,0.}
\definecolor{colorone}{rgb}{0.70,0.13,0.}
\newcommand{\bdots}{\mathinner{\mkern1mu\raise\p@\vbox{\kern8\p@\hbox{.}}\mkern2mu\raise4\p@\hbox{.}\mkern2mu\raise7\p@\hbox{.}\mkern1mu}}
\newtheorem{thm}{Theorem}[subsection]
\newtheorem{defn}[thm]{Definition}
\newtheorem{prop}[thm]{Proposition}
\newtheorem{cor}[thm]{Corollary}
\newtheorem{lem}[thm]{Lemma}
\newtheorem{ur:remark}[thm]{Remark}
\newtheorem{ur:nota}[thm]{Notation}
\newenvironment{nota}{\begin{ur:nota}\rm}{\end{ur:nota}}
\newtheorem{ur:remarks}[thm]{Remarks}
\newenvironment{rems}{\begin{ur:remarks}\rm}{\end{ur:remarks}}
\newtheorem{ur:exemple}[thm]{Example}
\newenvironment{example}{\begin{ur:exemple}\rm}{\end{ur:exemple}}
\newtheorem{ur:examples}[thm]{Examples}
\newenvironment{bfen}{\begin{enumerate}[label=\bfseries{\arabic*.},ref=\bfseries{\arabic*.}]}{\end{enumerate}}
\newenvironment{ien}{\begin{enumerate}[label=(\roman*),ref=(\roman*)]}{\end{enumerate}}
\newtheorem{pr}{}
\newcommand{\bpr}{\begin{pr} \begin{rm}}
\newcommand{\epr}{\end{rm} \end{pr}}
\newcommand{\nc}{{\mathbb C}}
\newcommand{\fraks}{{\mathfrak S}}
\newcommand{\nz}{{\mathbb Z}}
\def\ri{{\mathrm i}}
\newcommand{\p}[1]{\left({#1}\right)}
\newcommand{\av}[1]{\left|{#1}\right|}
\newcommand{\qu}[1]{\left[{#1}\right]}
\newcommand{\bm}[1]{\mbox{\boldmath${#1}$\unboldmath}}
\newcommand{\mb}[1]{\mathbf{{#1}}}
\newcommand{\brr}[1]{\left\{{#1}\right\}}
\def\Mat{{\mathrm{Mat}}}
\def\Id{{\mathrm{Id}}}
\def\Hom{{\mathrm{Hom}}}
\def\Sym{{\mathrm{Sym}}}
\def\expodot{\exp_{\odot}}
\def\odots{\odot\cdots\odot}
\newlength{\Oldarrayrulewidth}
\begin{document}
\title{{\bf Formal first integrals and higher variational equations}}
\author{{\Large Sergi Simon} \\ \\
School of Mathematics and Physics, University of Portsmouth \\
Lion Gate Bldg, Lion Terrace 
Portsmouth PO1 3HF, UK\\
\texttt{sergi.simon@port.ac.uk}
}
\maketitle

\begin{abstract}

The question of how Algebra can be used to solve dynamical systems and
characterize chaos was first posed in a fertile mathematical context by Ziglin, Morales,
Ramis and Simó using differential Galois theory. Their study was aimed at first-order,
later higher-order, variational equations of Hamiltonian systems. Recent work by this
author formalized a compact yet comprehensive expression of higher-order variationals
as one infinite linear system, thereby simplifying the approach. More importantly, the
dual of this linear system contains all information relevant to first integrals, regardless of
whether the original system is Hamiltonian. This applicability to formal calculation of
conserved quantities is the centerpiece of this paper, following an introduction to the
requisite context. Three important examples, namely particular cases of Dixon's system, the SIR epidemiological model with vital dynamics and the Van der Pol oscillator, are tackled, and explicit convergent first integrals are provided for the first two.

\noindent \textbf{Keywords.}  Integrability, Ziglin-Morales-Ramis-Sim\'o theory, formal calculus, chaos, Dixon's system, SIR epidemiological model.

\noindent \textbf{2000 Mathematics Subject Classification:} 34A05, 37C79, 37J99, 37J30, 34M15, 34C28, 37C10, 15A69, 16W60, 13F25, 37N25. 
\end{abstract}

\tableofcontents 

\section{Introduction}
Given an arbitrary dynamical system, the formulation of its higher variational equations as a linear (infinite) system $\p{\mathrm{LVE}^\star}$ has shown potential to make strong inroads in the study of integrability \cite{SimonLVE,SimonFRW}. The study, part of which we will explain more in detail in \S \ref{intro}, is twofold:
\begin{ien}
\item on one hand, the original set of equations $\p{\mathrm{LVE}}$ is amenable to the Ziglin-Morales-Ramis-Sim\'o non-integrability framework  whenever the system is \emph{Hamiltonian} and the first integrals whose existence is obstructed are \emph{meromorphic} (\cite{Mo99a,MoralesRamis,MoralesRamisII,MoRaSi07a} and a long assorted array of references derived therefrom, including \cite{SimonLVE,SimonFRW}); there is further, as of yet unfinished study in the direction of situations where the system is not Hamiltonian (\cite{HuangShiLi,LiShi1,LiShi2}).
\item \label{item2} on the other, the \emph{dual} system $\p{\mathrm{LVE}}^\star$ has jets of formal first integrals among its solutions (\cite{ABSW,SimonLVE}), which entails the possibility to furnish first integrals instead of finding obstructions to them; the advantage of this second approach is that the original system need not be Hamiltonian, and the formal first integrals, if convergent need not be meromorphic. The only difficulties in this case are computational in nature, namely in the context of resummation techniques.
\end{ien}

In the present work we will exploit the second item \ref{item2}, namely by describing a method to automatically produce Taylor terms of formal first integrals by way of an automatic, easily recursified sequence of quadratures. First we recount the minimal background exposition necessary, then we present the main results in \S \ref{filtersec} and finally we apply it to two simple examples to test its accuracy and usefulness as well as make novel statements about the integrability of the examples themselves.

\subsection{The algebraic study of integrability} \label{intro}

\subsubsection{Basics} \label{basics}
Let $\psi\p{t,\cdot}$ be the flow and ${\phi}\left(t\right)=\psi\p{t,\bm{x}}$ a particular solution of a given autonomous system
\begin{equation} \label{sys}
\dot{{\bm z}} = X \left( \bm{z}\right) , \qquad X:\nc^m\to \nc^m,
\end{equation}
respectively. The \textbf{variational system} of \eqref{sys} along ${\phi}$ has $\frac{\partial}{\partial{z}}\psi\p{t,{\phi}}$ as a fundamental matrix:
\begin{equation}
\label{VE}
\tag{$\mathrm{VE}_{\phi}$}\dot{Y} =  A_1 Y, \quad A_1\p{t}  :=  \left.\frac{\partial X}{\partial \bm{z}}\right|_{{\small {z}={\phi}\p{t}}}\in \mathrm{Mat}_n\p{K},
\end{equation}
$K=\nc\p{\bm\phi}$ being the smallest differential field (\cite[Def. 1.1]{SingerVanderput}) containing all entries of $\bm\phi\p{t}$. $\frac{\partial^k}{\partial{\small \bm z}^k}\psi\p{t,{\phi}}$ are multilinear $k$-forms appearing in the Taylor expansion of the flow along $\bm\phi$:
\begin{equation}\label{taylorexp}
\psi\p{t,{z}} = \psi\p{t,{\phi}} + \sum_{k=1}^\infty \frac1{k!}\frac{\partial^k \psi\p{t,{\phi}}}{\partial{z}^k} \brr{{z}-{\phi}}^k;
\end{equation}
$\partial^k_z\psi\p{t,{\phi}}$ also satisfy an echeloned set of systems, depending on the previous $k-1$ partial derivatives and usually called \textbf{order-$k$ variational equations} $\mathrm{VE}_\phi^k$ (\cite[p. 859-861]{MoRaSi07a}, \cite[Corollary 3]{SimonLVE}). 
Thus we have, \eqref{sys} given, a \emph{linear} system ${{\mathrm{VE}_\phi}}=:{{\mathrm{VE}_\phi^1}}=:{{\mathrm{LVE}_\phi^1}}$
and a family of \emph{non-linear} systems $\brr{{{\mathrm{VE}_\phi^k}} }_{k\ge 2}$.

\cite{SimonLVE} presented an explicit \emph{linearized} version ${\mathrm{LVE}_\phi^k}$, $k\ge 1$, by means of symmetric products $\odot$ of finite and infinite matrices based on already-existing definitions by Bekbaev, e.g. \cite{bekbaevSept2009}. This was done in preparation for the Ziglin-Morales-Ramis-Sim\'o (ZMRS) theoretical framework based on monodromy and differential Galois groups \cite{Mo99a,SingerVanderput,Ziglin}, but has other consequences as well. More specifically, our outcomes in \cite{SimonLVE} have two applications for system \eqref{sys}, \emph{Hamiltonian or not}: 
\begin{itemize}
\item full structure of ${\mathrm{VE}_{\phi}^k}$ and ${\mathrm{LVE}_{\phi}^k}$, i.e. \emph{recovering the flow}, which underlies the ZMRS theoretical corpus in practicality, albeit as a tool rather than as a goal;
\item  a byproduct is the full structure of dual systems $\p{{\mathrm{LVE}_{\phi}^k}}^\star$, i.e. \emph{recovering formal first integrals of \eqref{sys}} in ways which simplified earlier results in \cite{ABSW} significantly. 
\end{itemize}
As said in the introduction, results in the present paper are based on the second of these applications. For examples of the first application, see \cite[\S 6]{SimonLVE} or the bulk of \cite{SimonFRW}. See also \cite{martsim1,martsim2} for examples where the non-linearized $\mathrm{VE}_k$ were used.

\subsection{Symmetric products, powers and exponentials}\label{symsection}

\begin{nota}\rm \label{notalex} The conventions listed below were already introduced in \cite{ABSW,SimonLVE,SimonFRW}:
\begin{bfen} 
\item \emph{Multi-index modulus, arithmetic, order and lexicographic order}: for $\mathbf{i}=\left(i_1 ,\ldots , i_n\right)\in\nz_{\ge 0}^n$, $i=\left|\mathbf{i}\right|:=\sum_k i_k$; addition and subtraction are defined entrywise as usual; $\mathbf{i}\le \mathbf{j}$ means $ i_k \le j_k$ for every $k\ge 1$; $\mathbf{i}<_{\mathrm{lex}}\mathbf{j}$ if 
$ i_1 = j_1,\ldots,i_{k-1}=j_{k-1}$ and $i_k < j_k$ for some $k\ge 1$. 
\item \label{notalex3}
Whenever such derivation is possible, we define the 
	{\em lexicographically sifted differential of $F\p{z_1,\dots,z_m}$ of order $m$}   as the row vector
\begin{equation}\label{lexdef}
F^{(m)}\left(\bm z\right):=\mathrm{lex}\left(\frac{\partial^{m} F}{\partial z^{i_1}_1 \ldots \partial z^{i_n}_n }\left(\bm z\right)\right), \end{equation}
where $ \av{\mb{i}} = m$ and entries are ordered as per $<_{\mathrm{lex}}$ on multi-indices. For instance, for $n=2$ the first two differentials would be
\[
F^{(1)} = \p{\begin{array}{ccc} \frac{\partial F}{\partial z_1} & \frac{\partial F}{\partial z_2} & \frac{\partial F}{\partial z_3} \end{array}},\quad
F^{(2)} = \p{\begin{array}{cccccc} 
\frac{\partial^2 F}{\partial z_1^2} & 
\frac{\partial^2 F}{\partial z_1z_2} & 
\frac{\partial^2 F}{\partial z_1z_3} & 
\frac{\partial^2 F}{\partial z_2^2} & 
\frac{\partial^2 F}{\partial z_2z_3} & 
\frac{\partial^2 F}{\partial z_3^2} 
\end{array}}.
\]
\item We define 
$
d_{n,k} := \binom{n+k-1}{n-1} , \; D_{n,k} := \sum_{i=1}^k d_{n,i}. 
$
It is easy to check there are $d_{n,k}$ $k$-ples of integers in $\brr{1,\dots,n}$, and just as many
homogeneous monomials of degree $n$ in $k$ variables. 
\item Given integers $k_1,\dots,k_n\ge 0$, we define the usual multinomial coefficient as
{\small\[
\binom{k_1+\dots+k_n}{k_1,\dots,k_n}:=\binom{k_1+\dots+k_n}{\mb{k}}:=\frac{\p{k_1+\dots+k_n}!}{k_1!k_2!\cdots k_n!}.
\]}
For a multi-index $\mb{k}\in\nz_{\ge 0}^n$,  define $\mb{k!}:=k_1!\cdots k_n!$. For any two such $\mb{k},\mb{j}$, 
we define 
{\small\begin{equation} \label{newbinom} \binom{\mathbf{k}}{\mathbf{p}} := \frac{k_1!k_2!\cdots k_n!}{p_1!p_2!\cdots p_n!\p{k_1-p_1}!\p{k_2-p_2}!\cdots \p{k_n-p_n}!} 
=  \binom{k_1}{p_1} \binom{k_2}{p_2} \cdots \binom{k_n}{p_n},
\end{equation}}
and the multi-index counterpart to the multinomial,
$
\binom{\mb{k}_1+\dots+\mb{k}_m}{\mb{k}_1,\dots,\mb{k}_m}:=\frac{\p{\mb{k}_1+\dots+\mb{k}_n}!}{\mb{k}_1!\mb{k}_2!\cdots \mb{k}_n!}.
$
\end{bfen}
\end{nota}

The compact formulation called for by Notation \ref{notalex} (\textbf{3}) was achieved in \cite{SimonLVE} through an operation $\odot$ that had already been defined by Bekbaev (e.g.  \cite{bekbaevSept2009,bekbaevOct2010}) and was systematized using basic categorical properties of the tensor product. 
Let $K$ be a field and $V$ a $K$-vector space. Let $\Sym^rV$ be the symmetric power of $V$. We write $\bm{w}_1\odot \bm{w}_2$ for equivalence classes of tensor products of these vectors.
Hence,  product $\odot$ operates exactly like products of homogeneous polynomials in several variables.
\begin{nota}When dealing with matrix sets, we will use super-indices and subindices:
\begin{bfen}
\item The space of \textbf{$\p{i,j}$-matrices} $\Mat_{m,n}^{i,j}\p{K}$ is defined equivalently as the set of $d_{m,i}\times d_{n,j}$ matrices with entries in $K$, or vector space $\Hom_K\p{\Sym^jK^m;\Sym^iK^n}$. 
\item It is clear from the above that $\Mat_{n}^{0,0}\p{K}$ is the set of all scalars $\alpha\in K$ and $\Mat_{n}^{0,k}\p{K}$ (resp. $\Mat_{n}^{k,0}\p{K}$) is made up of all row (resp. column) vectors whose entries are indexed by $d_{n,k}$ lexicographically ordered $k$-tuples.
\item Reference to $K$ may be dropped and notation may be abridged if dimensions are repeated or trivial, e.g. $\Mat_{n}^{i,j}:=\Mat_{n,n}^{i,j}$, $\Mat_{m,n}^{i}:=\Mat_{m,n}^{i,i}$, $\Mat_n:=\Mat_n^{1}$, etcetera. 
\item $\Mat^{n,m}\p{K}$ denotes the set of block matrices 
$A =\p{A_{i,j}}_{i,j\ge 0}$ with $A_{i,j}:\Sym^{i}K^m\to \Sym^{j}K^n,$ hence $A_{i,j}\in \mathrm{M}_{d_{n,i}\times d_{m,j}} \p{K} = \mathrm{Mat}^{i,j}_{n,m}\p{K}$:
{\small\[
A = 
\p{\begin{tabular}{>{\centering\arraybackslash}m{.5cm}|>{\centering\arraybackslash}m{2.2cm}|>{\centering\arraybackslash}m{1.2cm}|>{\centering\arraybackslash}m{-0.3cm}|c}
$\ddots$  &$\vdots$  & $\vdots$ & $\vdots$ &  \\ \cline{1-5}
\multirow{5}{*}{$\cdots$}    & \multirow{5}{*}{$A_{2,2}$} & \multirow{5}{*}{$A_{2,1}$} & \multirow{5}{*}{}&\multirow{5}{*}{$\hspace{-0.45cm}\leftarrow A_{2,0}$} \\ 
  &  &  &  &  \\ 
    &  &  &  &  \\ 
      &  &  &  &  \\ 
        &  &  &  &  \\  \cline{1-5}
\multirow{3}{*}{$\cdots$}    & \multirow{3}{*}{$A_{1,2}$} & \multirow{3}{*}{$A_{1,1}$} & \multirow{3}{*}{}&\multirow{3}{*}{$\hspace{-0.45cm}\leftarrow A_{1,0}$} \\ 
  &  &  &  &  \\  
    &  &  &  &  \\  \cline{1-5}
$\cdots$    & $A_{0,2}$ & $A_{0,1}$ & & ${\hspace{-0.45cm}\leftarrow A_{0,0}}$ \\ 
\cline{1-5}
\end{tabular}
}
\]}We write $\Mat:=\Mat^{n,n}$ if $n$ is unambiguous. 
Conversely, $\mathrm{Mat}^{i,j}_{n,m}$ is embedded in $\Mat^{n,m}$ by identifying every matrix $A_{i,j}$ with an element of $\Mat^{n,m}$ equal to $0$ save for block $A_{i,j}$.
\end{bfen}
\end{nota}
\begin{defn}[Symmetric products of finite and infinite matrices]\label{defSymProd}\cite{bekbaevSept2009,SimonLVE}
\begin{ien}
\item Let $A\in \Mat_{m,n}^{i_1,j_1}\p{K}$, $B\in \Mat_{m,n}^{i_2,j_2}\p{K}$.
Given any multi-index $\mathbf{k}=\p{k_1,\dots,k_n}\in\nz^n_{\ge 0}$ such that $\av{\mathbf{k}} = k_1+\dots + k_n = j_1+j_2$, 
define $C:= A\odot B  \in \Mat_{m,n}^{i_1+i_2,j_1+j_2}$ by 
{\small\begin{equation} \label{SymProd}
C \p{\bm{e}_1^{\odot k_1}\cdots \bm{e}_n^{\odot k_n}} = \frac{1}{\binom{j_1+j_2}{j_1}}\sum_{ \mb{p}} \binom{\mathbf{k}}{\mathbf{p}} A\p{ \bm{e}_1^{\odot p_1}\cdots \bm{e}_n^{\odot p_n}} \!\odot \!
B\p{ \bm{e}_1^{\odot k_1-p_1}\cdots \bm{e}_n^{\odot k_n-p_n}},
\end{equation}}notation abused by removing $\odot$ to reduce space, binomials as in \eqref{newbinom}
and summation taking place for specific multi-indices $\mathbf{p}$, namely those such that
\[ \av{\mathbf{p}} = j_1 \qquad \mbox{ and \qquad } 0 \le p_i\le k_i, \quad i=1, \dots , n. \]
\item For any $A,B\in \Mat^{n,m}\p{K}$, define $A\odot B = C\in \Mat^{n,m}\p{K}$ by \begin{equation}\label{SymProdInf} 
C=\p{C_{i,j}}_{i,j\ge 0}, \qquad C_{i,j} = \sum_{0\le i_1\le i,\; 0\le j_1\le j} \binom{j}{j_1}  A_{i_1,j_1}\odot B_{i-i_1,j-j_1}. 
\end{equation}
Same as always, ${}^{\odot k}$ will stand for powers built with this product. 
\end{ien}
\end{defn}

The following is a mere exercise in induction:
\begin{lem}\label{original} Defining $\bigodot_{i=1}^rA_i$ recursively by $\p{\bigodot_{i=1}^{r-1}A_i}\odot A_r$ with 
$A_i\in\Mat_{m,n}^{k_i,j_i}$, 
\begin{equation}  \label{multipleproduct}
\p{A_1\odots A_r}\bm{e}^{\odot\mb{k}} = \frac{1}{\binom{j_1+\dots+j_r}{j_1,j_2,\dots,j_r}}\sum_{\mb{p}_1,\dots,\mb{p}_r}\binom{\mb{k}}{\mb{p}_1,\dots,\mb{p}_r}\bigodot_{i=1}^r A_i \bm{e}^{\odot\mb{p}_i},
\end{equation}
if $ \av{\mb{k}}=j_1+\dots+j_r$, sums obviously taken for $\mb{p}_1+\dots+\mb{p}_r=\mb{k}$ and $\av{\mb{p}_i}=j_i$, for every $i=1,\dots,r$. \quad $\square$
\end{lem}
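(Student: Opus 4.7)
The plan is to prove Lemma \ref{original} by induction on $r$, with the base case $r=2$ being essentially a restatement of Definition \ref{defSymProd}\,(i). In that case, the multinomial $\binom{j_1+j_2}{j_1,j_2}$ reduces to the ordinary binomial $\binom{j_1+j_2}{j_1}$ appearing in \eqref{SymProd}, the sum ranges over $\mb{p}_1+\mb{p}_2=\mb{k}$ with $|\mb{p}_1|=j_1$ (and then $|\mb{p}_2|=j_2$ is forced), and $\binom{\mb{k}}{\mb{p}_1,\mb{p}_2}=\binom{\mb{k}}{\mb{p}_1}$ as a multi-index binomial in the sense of \eqref{newbinom}, so the formula matches \eqref{SymProd} term-by-term.

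For the inductive step, I assume the formula holds for $r-1$ factors and use the recursive definition $\bigodot_{i=1}^{r} A_i = \left(\bigodot_{i=1}^{r-1} A_i\right)\odot A_r$. Applying \eqref{SymProd} with $B=A_r$ (so the row of $B$ has degree $j_r$ and the row of $A=\bigodot_{i=1}^{r-1}A_i$ has degree $J:=j_1+\cdots+j_{r-1}$) produces
\[
\Bigl(\bigodot_{i=1}^{r} A_i\Bigr)\bm{e}^{\odot\mb{k}} = \frac{1}{\binom{J+j_r}{J}}\sum_{\mb{q}} \binom{\mb{k}}{\mb{q}}\,\Bigl(\bigodot_{i=1}^{r-1}A_i\Bigr)\bm{e}^{\odot\mb{q}}\;\odot\;A_r\,\bm{e}^{\odot(\mb{k}-\mb{q})},
\]
the sum ranging over $|\mb{q}|=J$ with $\mb{0}\le\mb{q}\le\mb{k}$. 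I then insert the inductive hypothesis for $\bigl(\bigodot_{i=1}^{r-1}A_i\bigr)\bm{e}^{\odot\mb{q}}$, yielding an inner sum over $\mb{p}_1+\cdots+\mb{p}_{r-1}=\mb{q}$ with $|\mb{p}_i|=j_i$, weighted by $\binom{J}{j_1,\dots,j_{r-1}}^{-1}\binom{\mb{q}}{\mb{p}_1,\dots,\mb{p}_{r-1}}$. Setting $\mb{p}_r:=\mb{k}-\mb{q}$ turns the two nested sums into a single sum over all partitions $\mb{p}_1+\cdots+\mb{p}_r=\mb{k}$ with $|\mb{p}_i|=j_i$, exactly as claimed in \eqref{multipleproduct}.

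What remains is the bookkeeping of the scalar prefactor. The scalar multinomial identity
\[
\binom{J+j_r}{J}\binom{J}{j_1,\dots,j_{r-1}} = \binom{j_1+\cdots+j_r}{j_1,\dots,j_r},
\]
together with the multi-index counterpart
\[
\binom{\mb{k}}{\mb{q}}\binom{\mb{q}}{\mb{p}_1,\dots,\mb{p}_{r-1}} = \binom{\mb{k}}{\mb{p}_1,\dots,\mb{p}_r},
\]
which follows entrywise from \eqref{newbinom} applied to each coordinate of the multi-indices, collapse the two prefactors into the ones appearing on the right-hand side of \eqref{multipleproduct}. The main (and only genuine) obstacle is this bookkeeping: keeping track of which binomials are scalar versus multi-index, and verifying that every partition of $\mb{k}$ into $r$ parts with the prescribed moduli is counted exactly once. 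Once those identities are in place, the recursive assembly of $\bigodot_{i=1}^{r}A_i\,\bm{e}^{\odot\mb{p}_i}$ on the right is immediate from the associativity built into the recursive definition, completing the induction.
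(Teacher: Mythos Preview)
Your proof is correct and follows exactly the approach the paper indicates: the paper states just before the lemma that it ``is a mere exercise in induction'' and provides no further argument. Your base case correctly identifies \eqref{SymProd} as the $r=2$ instance, and your inductive step, together with the two multinomial identities you wrote down, is precisely the bookkeeping the paper leaves to the reader.
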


The following is straightforward and has already been seen e.g. in \cite{bekbaevSept2009,bekbaevOct2010,SimonLVE}, or can be easily derived therefrom:
\begin{prop}\label{odotproperties}
For any $A$, $B$, $C$, and whenever products make sense, 
\begin{enumerate}
\item $A\odot B = B\odot A$.
\item $\p{A+B} \odot C = A \odot C + B \odot C$. 
\item $\p{A\odot B} \odot C = A\odot \p{B\odot C}$. 
\item $\p{\alpha A}\odot B = \alpha \p{A\odot B}$ for every $\alpha \in K$. 
\item If $A$ is square and invertible, then $\p{A^{-1}}^{\odot k} = \p{A^{\odot k}}^{-1}$.
\item $A\odot B =0$ if and only if $A=0$ or $B=0$.
\item If $A$ is a square $(1,1)$-matrix, then 
$ A\bm{v}_1 \odot A\bm{v}_2 \odot \cdots \odot A\bm{v}_m = A^{\odot m} \bm{v}_1 \odot \cdots \odot \bm{v}_m . $
\item If $\bm{v}$ is a column vector, then 
$\p{A\odot B} \bm{v}^{\odot \p{p+q}} =  \p{A\bm{v}^{\odot p}} \odot \p{B\bm{v}^{\odot q}}$, $p,q\in \nz_{\ge 0} . $
\end{enumerate}
\end{prop}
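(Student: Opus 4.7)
The plan is to reduce every identity to the finite $(i,j)$-matrix case given by (\ref{SymProd}) and then extend via the block formula (\ref{SymProdInf}), which is a finite sum at each fixed bidegree, so no convergence issue arises. Items (1), (2) and (4) read off directly from (\ref{SymProd}): commutativity combines the symmetry $\binom{\mb{k}}{\mb{p}}=\binom{\mb{k}}{\mb{k}-\mb{p}}$ with the commutativity of $\odot$ on $\Sym^r V$, while distributivity and scalar compatibility follow from the linearity of $A\mapsto A(\bm{e}^{\odot\mb{p}})$ in the entries of $A$. For item (3) I would appeal to Lemma \ref{original} with $r=3$: expanding each of $(A\odot B)\odot C$ and $A\odot(B\odot C)$ through (\ref{multipleproduct}) yields the same triple sum indexed by $\mb{p}_1+\mb{p}_2+\mb{p}_3=\mb{k}$ with the common multinomial coefficient $\binom{\mb{k}}{\mb{p}_1,\mb{p}_2,\mb{p}_3}$, so the two bracketings coincide.

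Item (8) will be a direct substitution: I apply (\ref{SymProd}) to $\bm{v}^{\odot(p+q)}$, expanded by the multinomial theorem into basis monomials $\bm{e}^{\odot\mb{k}}$, and then re-sum the resulting double sum so that the outcome factors as $(A\bm{v}^{\odot p})\odot(B\bm{v}^{\odot q})$. Item (7) follows from (8) by polarization in characteristic zero: both sides are multilinear and symmetric in $\bm{v}_1,\dots,\bm{v}_m$, so it suffices to verify the diagonal case $\bm{v}_1=\cdots=\bm{v}_m=\bm{v}$, namely $A^{\odot m}\bm{v}^{\odot m}=(A\bm{v})^{\odot m}$, which I prove by induction on $m$, with the step using (8) with $A^{\odot(m-1)}$ and $A$ playing the roles of the two factors. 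Item (5) is then immediate: composing the two applications $(A^{\odot k})\circ((A^{-1})^{\odot k})$ on a decomposable tensor via (7) gives the identity on $\Sym^k K^n$, and invertibility on decomposable generators suffices.

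The main obstacle is item (6). The plan is to view $\Mat^{n,m}(K)$, via (\ref{SymProd})--(\ref{SymProdInf}), as (a bidegree-completion of) the bigraded algebra $\Sym((K^m)^*)\otimes\Sym(K^n)$, with $\odot$ corresponding, up to the normalization factor in (\ref{SymProd}), to its polynomial multiplication. Since this ambient ring is a polynomial ring over a field and hence an integral domain, the finite-block case of (6) is immediate. For the general infinite case, supposing $A,B\ne 0$, I select bidegree-minimal nonzero blocks $A_{i_A,j_A}$ and $B_{i_B,j_B}$, say with respect to the lexicographic order on $(i+j,\,j)$. Every other contribution to $(A\odot B)_{i_A+i_B,\,j_A+j_B}$ in (\ref{SymProdInf}) then involves a strictly smaller block of either $A$ or $B$ and thus vanishes by minimality, leaving only $\binom{j_A+j_B}{j_A}\,A_{i_A,j_A}\odot B_{i_B,j_B}$, which is nonzero by the finite case (the binomial coefficient is a nonzero integer, so causes no issue in characteristic zero), and the proof concludes.
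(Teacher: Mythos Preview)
The paper does not give a proof of this proposition: it simply states that the properties are straightforward and already appear in \cite{bekbaevSept2009,bekbaevOct2010,SimonLVE}. Your proposal therefore supplies strictly more than the paper does, and the arguments you outline are correct. In particular, your handling of item~(6) is sound: the identification of $\bigoplus_{i,j}\Mat^{i,j}_{m,n}$ under $\odot$ with (a bidegree-rescaled version of) the polynomial algebra $\Sym((K^m)^*)\otimes\Sym(K^n)$ gives the finite-block case immediately, and the minimal-block argument in the lexicographic order on $(i+j,j)$ works exactly as you say, since that order is a well-order on $\nz_{\ge 0}^2$ and the constraints $i_1+j_1\ge i_A+j_A$, $i_2+j_2\ge i_B+j_B$, $j_1\ge j_A$, $j_2\ge j_B$ with fixed sums force equality.

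Two small remarks. First, your polarization step in item~(7) tacitly uses characteristic zero; this is harmless here since the paper works over $\nc$, but it is worth flagging. Second, your use of Lemma~\ref{original} for associativity is efficient, but note that Lemma~\ref{original} is itself stated with the recursive definition $\bigodot_{i=1}^r A_i := (\bigodot_{i=1}^{r-1}A_i)\odot A_r$, so to avoid circularity you should read it as computing both bracketings separately via~(\ref{SymProd}) and observing that each yields the same trinomial expansion~(\ref{multipleproduct}); this is clearly what you intend.
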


\begin{lem}[\cite{SimonLVE}]\label{anyprodslem}
\begin{ien}
\item Given square $A,B\in\Mat_n^{k,k}$ and matrices $X_i\in\Mat_n^{k,j_i},\,i=1,2$,  
\begin{equation}\label{anyprods}
\p{A\odot B}\p{X_1\odot X_2} = \frac{1}{2} \p{AX_1\odot BX_2 + BX_1\odot AX_2},
\end{equation} 
and in general for any square $A_1,\dots,A_m\in\Mat_n^{k,k}$ and any $X_i\in\Mat_n^{k,j_i}$, $i=1,\dots,m$,
\begin{equation}\label{anyprodsgeneral}
\p{\bigodot_{i=1}^m A_i}\p{\bigodot_{i=1}^m X_i} = \frac{1}{m!} \sum_{\sigma\in\fraks_k}\bigodot_{i=1}^mA_{\sigma\p{i}}X_i. \qquad \square 
\end{equation} 
\item Given $A\in\Mat_n^{1,j}$ and $X_1,\dots,X_m$ such that $X_i\in\Mat_n^{1,q_i}$, $1\le j \le m$, 
{\small\begin{equation}\label{prodsmoregeneral}
\binom{m}{j} \!\p{A\odot\Id^{\odot m-j}_{n}}\!\bigodot_{i=1}^m X_i = \sum_{1\le i_1<\dots<i_j\le m}\!\qu{A\p{X_{i_1}\odots X_{i_j}}}\odot \!\bigodot_{s\ne i_1,\dots,i_j} X_s. \quad \square
\end{equation} }
\item Given a square matrix $A\in\Mat_n^{1,1}$ and $X_1,\dots,X_m$ such that $X_i\in\Mat_n^{1,j_i}$,
\begin{equation}\label{prodsgeneral}
\p{A\odot\Id^{\odot m-1}_{n}}\p{\bigodot_{i=1}^m X_i} = \frac{1}{m} \sum_{i=1}^m\p{AX_i}\odot \p{X_1\odots \widehat{X_i}\odots X_m}. 
\end{equation}
\item Given square matrix $X\in \Mat_n^{k-1,k-1}$ and vector $\bm{v}\in K^n$, we have, for each $i=1,\dots,n$,
\begin{equation} \label{hello}
\p{X\odot \bm{e}_i^T} \p{\bm{v}\odot \Id_n^{\odot k-1}} = \frac{k-1}{k} X\p{\bm{v}\odot \Id_n^{\odot k-2}}\odot \bm{e}_i^T + \frac{v_i}{k} X
\end{equation}
\end{ien}
\end{lem}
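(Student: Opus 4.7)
The plan is to verify each identity by combining Definition~\ref{defSymProd} with the properties collected in Proposition~\ref{odotproperties}, particularly Property~8, which reduces $(A\odot B)\bm{v}^{\odot(p+q)}$ to $A\bm{v}^{\odot p}\odot B\bm{v}^{\odot q}$ on pure powers. Since $\Sym^{r}K^{n}$ is spanned by elements of the form $\bm{v}^{\odot r}$, it suffices to check every claimed identity on such vectors and then extend by multilinearity and polarization.

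For part~(i), I would first establish \eqref{anyprods} by evaluating both sides at $\bm{v}^{\odot(j_1+j_2)}$: iterated application of Property~8 turns the left side into $A(X_1\bm{v}^{\odot j_1})\odot B(X_2\bm{v}^{\odot j_2})$, and the symmetric companion $BX_1\odot AX_2$ on the right is forced by commutativity of $\odot$ (Property~1), which makes the full expression invariant under the swap $A\leftrightarrow B$. The general identity \eqref{anyprodsgeneral} then follows by induction on $m$: group $A_1\odots A_{m-1}$ and $X_1\odots X_{m-1}$ each into a single factor and apply \eqref{anyprods} with $A_m$ and $X_m$, then invoke the inductive hypothesis on the $(m-1)$-fold product. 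Part~(iii) is the immediate specialization of \eqref{anyprodsgeneral} by setting $A_1=A$ and $A_2=\dots=A_m=\Id_n$: out of the $m!$ permutations in $\fraks_m$, exactly $(m-1)!$ send index $1$ to any prescribed slot $i$, producing the factor $\frac{(m-1)!}{m!}=\frac{1}{m}$.

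For part~(ii), the strategy is to expand the left-hand side via Lemma~\ref{original}: the multisum over $\mb{p}_1,\dots,\mb{p}_{m-j+1}$ factorizes into a choice of which $j$ of the $m$ factors $X_i$ are absorbed by $A$ (through their $\odot$-product) and how the remaining $m-j$ pair with the identity blocks. Reorganizing these contributions by the subset $\{i_1<\dots<i_j\}$ of absorbed indices reproduces \eqref{prodsmoregeneral}, with the combinatorial factor $\binom{m}{j}$ arising as the cardinality of the set of such subsets. Part~(iv) is a direct computation from Definition~\ref{defSymProd}: evaluate $(X\odot\bm{e}_i^T)(\bm{v}\odot\Id_n^{\odot(k-1)})$ on a basis element $\bm{e}^{\odot\mb{c}}$ of $\Sym^{k-1}K^n$, splitting the resulting sum into the term where the scalar block $\bm{e}_i^T$ pairs with the single column $\bm{v}$ (which contributes the summand $\frac{v_i}{k}X$) and the complementary term where $\bm{e}_i^T$ pairs with one of the identity blocks (which contributes the first summand on the right of \eqref{hello}).

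The main obstacle throughout is the combinatorial bookkeeping: Definition~\ref{defSymProd} introduces multinomial weights $\binom{\mb{k}}{\mb{p}}$ and a normalizer $1/\binom{j_1+j_2}{j_1}$, and these compound under Lemma~\ref{original} into nested sums that must be regrouped carefully against the weights appearing on the right-hand sides. Reducing each identity first to a verification on pure powers $\bm{v}^{\odot r}$ via Property~8 circumvents the worst of this by turning the problem into a polynomial identity in the entries of $\bm{v}$ and of the matrices involved, where the remaining binomial-coefficient manipulations can be dispatched one case at a time.
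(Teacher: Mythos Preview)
Your proposal for part~(i) contains a gap. After one application of Property~8 you correctly obtain $(X_1\odot X_2)\bm{v}^{\odot(j_1+j_2)}=X_1\bm{v}^{\odot j_1}\odot X_2\bm{v}^{\odot j_2}$, but you cannot apply Property~8 a second time to $(A\odot B)$ acting on this: Property~8 in Proposition~\ref{odotproperties} requires its argument to be a pure power $\bm{v}^{\odot(p+q)}$ of a single column vector $\bm{v}\in K^n$, whereas $X_1\bm{v}^{\odot j_1}\odot X_2\bm{v}^{\odot j_2}$ is a $\odot$-product of two generally distinct elements of $\Sym^kK^n$. Your remark that ``the symmetric companion $BX_1\odot AX_2$ is forced by commutativity'' does not repair this: commutativity $A\odot B=B\odot A$ only tells you the left-hand side is invariant under $A\leftrightarrow B$, it does not show that $(A\odot B)(\bm{u}\odot\bm{w})$ equals the average $\tfrac12(A\bm{u}\odot B\bm{w}+B\bm{u}\odot A\bm{w})$ rather than, say, each term separately (which would be false). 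The induction step for \eqref{anyprodsgeneral} inherits a related dimensional problem: grouping $A_1\odots A_{m-1}\in\Mat_n^{(m-1)k,(m-1)k}$ against $A_m\in\Mat_n^{k,k}$ does not fit the hypothesis of \eqref{anyprods}, where both square factors live in the \emph{same} $\Mat_n^{k,k}$, and the cross-term $A_m X'$ with $X'=X_1\odots X_{m-1}$ is not even defined.

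The fix is to do for part~(i) what you already propose for parts~(ii) and~(iv): verify \eqref{anyprods} directly on basis elements $\bm{e}^{\odot\mb{k}}$ of $\Sym^{j_1+j_2}K^n$ using Definition~\ref{defSymProd} or, more efficiently, Lemma~\ref{original}, and carry out the multinomial bookkeeping explicitly. Once \eqref{anyprods} (or rather its evident extension to $A\in\Mat_n^{k_1,k_1}$, $B\in\Mat_n^{k_2,k_2}$) is established this way, your derivation of part~(iii) as the specialisation $A_1=A$, $A_2=\dots=A_m=\Id_n$ with the $(m-1)!$ count is correct, and your outlines for parts~(ii) and~(iv) are sound. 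The paper itself gives no proof of this lemma, citing \cite{SimonLVE} and marking the items with $\square$; so there is no in-paper argument to compare yours against beyond the indication that the result follows from the basic combinatorics of Definition~\ref{defSymProd}.
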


\begin{lem}\label{odeprods} Let $\p{K,\partial}$ be a differential field.
\begin{ien}
\item\label{leibnizlem} For any given 
$X\in\Mat_n^{k_1,j_1}\p{K}$ and $Y\in\Mat_n^{k_2,j_2}\p{K}$, 
\begin{equation}\label{leibniz}
\partial \p{X\odot Y} = \partial\p{ X} \odot Y + X\odot \partial\p{Y}.\qquad \square
\end{equation}
\item If $Y$ is a square $n\times n$ matrix having entries in $K$ and $\partial Y = AY$, then 
\begin{equation} \label{Symkeq} \partial\; \Sym^k Y= k \p{A \odot \Sym^{k-1}\p{\Id_n}}\Sym^k Y.
\end{equation}
\item If $X\in \Mat_n^{1,j_1}$ and $Y\in \Mat_n^{1,j_2}$ satisfy systems $\partial X = AX+B_1$ and $\partial Y = AY+B_2$
with $A\in\Mat_n^{1,1},B_i\in\Mat_n^{1,j_i},
$
then symmetric product $X\odot Y$ satisfies linear system
\begin{equation} \label{Symskeq}
\partial \p{X\odot Y } = 2\p{A\odot \Id_{d_{n,k}}}\p{X\odot Y} +  \p{B_1\odot Y + B_2\odot X}.
\end{equation}
\item If $
\partial X_i = AX_i+B_i, \;  i=1,\dots,m,
$ with $X_i,B_i\in \Mat_n^{1,j_i}$,  $A\in \Mat_n^{1,1}$ 
then 
\begin{equation} \label{anySymskeq}
\partial \bigodot_{i=1}^m X_i = m\p{A\odot\Id_{d_{n,k}}^{\odot m-1}}\bigodot_{i=1}^m X_i + \sum_{i=1}^mB_i\odot\bigodot_{j\ne i} X_j . \qquad \square 
\end{equation}
\end{ien}
\end{lem}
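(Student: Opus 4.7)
The plan is to tackle the four parts in order, each reducing to a routine combination of the Leibniz rule with the algebraic identities already established in Proposition \ref{odotproperties} and Lemma \ref{anyprodslem}.

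For part \ref{leibnizlem}, I would argue directly from the explicit formula \eqref{SymProd} of Definition \ref{defSymProd}: every entry of $X\odot Y$ is a $K$-linear combination of products of one entry of $X$ by one entry of $Y$. Applying $\partial$ entrywise and invoking the ordinary Leibniz rule of the differential field $(K,\partial)$ on each such product, the bilinearity of $\odot$ (Proposition \ref{odotproperties}(2)) reassembles the two resulting sums into $\partial(X)\odot Y + X\odot\partial(Y)$.

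For part (ii), the plan is to apply \ref{leibnizlem} iteratively, using the commutativity of $\odot$ (Proposition \ref{odotproperties}(1)), to obtain $\partial\,\Sym^k Y = \partial\,Y^{\odot k} = k\,(\partial Y)\odot Y^{\odot(k-1)}$. Substituting $\partial Y = AY$ yields $\partial\,\Sym^k Y = k\,(AY)\odot Y^{\odot(k-1)}$. Finally I would invoke equation \eqref{prodsgeneral} of Lemma \ref{anyprodslem}(iii) with all $X_i=Y$: each of the $k$ summands on the right-hand side collapses to $(AY)\odot Y^{\odot(k-1)}$, so that $(A\odot\Id_n^{\odot k-1})Y^{\odot k} = (AY)\odot Y^{\odot(k-1)}$, producing \eqref{Symkeq}. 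Part (iii) is the two-factor affine case: applying \ref{leibnizlem} to $\partial(X\odot Y)$, substituting $\partial X = AX+B_1$ and $\partial Y = AY+B_2$, and distributing via bilinearity gives
\[
\partial(X\odot Y) = AX\odot Y + X\odot AY + B_1\odot Y + B_2\odot X.
\]
The two $A$-terms are exactly what \eqref{prodsgeneral} packages (with $m=2$, $X_1=X$, $X_2=Y$) as $2(A\odot\Id)(X\odot Y)$, giving \eqref{Symskeq}.

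Part (iv) is the obvious inductive extension of (iii): $m$ applications of Leibniz yield $\sum_{j=1}^m (\partial X_j)\odot\bigodot_{i\ne j}X_i$; substituting $\partial X_j = AX_j + B_j$ and distributing splits the sum into an $A$-part and a $B$-part. The $A$-part $\sum_{j=1}^m (AX_j)\odot\bigodot_{i\ne j}X_i$ collapses via \eqref{prodsgeneral} into $m\,(A\odot\Id^{\odot m-1})\bigodot_{i=1}^m X_i$, while the $B_j$ terms remain as written in \eqref{anySymskeq}. The only potential pitfall is bookkeeping of the combinatorial factor $m$ on the $A$-term and the asymmetry between $A$ (common to all $m$ equations) and the $B_j$ (one per system); once one trusts the normalizations already built into \eqref{prodsgeneral}, there is no genuine obstacle.
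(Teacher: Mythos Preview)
Your proposal is correct and matches the spirit of the paper, which in fact omits the proof entirely (the $\square$ marks after \eqref{leibniz} and \eqref{anySymskeq} signal that the author regards the lemma as routine). You have supplied exactly the expected argument: entrywise Leibniz for part \ref{leibnizlem}, then repeated use of \ref{leibnizlem} together with \eqref{prodsgeneral} from Lemma \ref{anyprodslem} to collect the $A$-terms in parts (ii)--(iv).
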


\begin{defn} (See also \cite{bekbaevOct2010}) \label{defexp}
for every matrix $A\in \Mat^{n,m}$ we define the formal power series 
\[
\expodot A := 1 + A^{\odot 1} + \frac 12 A^{\odot 2} + \cdots = \sum_{i=0}^{\infty} \frac{1}{i!} A^{\odot i}. 
\]
Whenever $A=0$ save for a finite distinguished submatrix $A_{j,k}$, the abuse of notation $\expodot A_{j,k}=\expodot A$ will be customary.
\end{defn}
\begin{lem} \label{expprops0}
\begin{enumerate}
\item[\phantom{hola}]
\item For every two $A,B\in\Mat^{n,m}$, $\expodot\p{A+B} =\expodot{A}\odot\expodot{B}$. 
\item For every $Y\in \Mat^{n,m}$ and any derivation $\partial:K\to K$,  $\partial\expodot Y = \p{\partial Y}\odot \expodot Y. $
\item (\cite[Corollary 3]{bekbaevSept2009}) Given square matrices $A,B\in \Mat^{1,1}_n$, $\expodot AB = \expodot A \expodot B$. 
\item In particular, for every invertible square $A\in \Mat^{1,1}_n$, $\expodot A^{-1} = \p{\expodot A}^{-1}$. 
$\square$
\end{enumerate}
\end{lem}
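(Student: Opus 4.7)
The plan is to dispatch the four items in turn, taking item 3 as cited from \cite[Corollary 3]{bekbaevSept2009} and leveraging it for item 4. All arguments are formal manipulations of power series in $\odot$, so convergence issues do not arise.

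For item 1, I would first establish a binomial-type expansion
\[
\p{A+B}^{\odot k} \,=\, \sum_{p+q=k}\binom{k}{p}\, A^{\odot p}\odot B^{\odot q},
\]
by induction on $k$, using commutativity, distributivity and associativity of $\odot$ (Proposition \ref{odotproperties}, items 1--3). Dividing by $k!$ and reindexing the resulting double sum as a Cauchy product then yields $\expodot\p{A+B} = \sum_{p,q\ge 0}\frac{1}{p!\,q!}A^{\odot p}\odot B^{\odot q} = \expodot A\odot\expodot B$.

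Item 2 follows directly from the Leibniz rule (Lemma \ref{odeprods}\ref{leibnizlem}): a short induction produces $\partial Y^{\odot k} = k\,(\partial Y)\odot Y^{\odot k-1}$ for every $k\ge 1$, and differentiating the series termwise gives
\[
\partial\expodot Y \,=\, \sum_{k\ge 1}\frac{1}{(k-1)!}\,\p{\partial Y}\odot Y^{\odot k-1} \,=\, \p{\partial Y}\odot\expodot Y.
\]

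For item 4, the strategy is to set $B = A^{-1}$ in item 3 to obtain $\expodot A\cdot\expodot A^{-1} = \expodot\Id_n$, and then check that $\expodot\Id_n$ equals the multiplicative identity of $\Mat^{n,n}$. Viewing $\Id_n$ as embedded in $\Mat^{n,n}$ with its only non-zero block at bidegree $(1,1)$, a short induction via Definition \ref{defSymProd}(ii) shows that each $\Id_n^{\odot k}$ has a single non-zero block, of bidegree $(k,k)$, equal to $k!\,\Id_{d_{n,k}}$; the factor $k!$ emerges combinatorially because at each inductive step exactly one partition of the indices contributes and it carries the binomial weight. The $\frac{1}{k!}$ in the exponential then leaves precisely $\Id_{d_{n,k}}$ in each diagonal block of $\expodot\Id_n$, so $\expodot\Id_n$ is the block-diagonal identity, and $\expodot A^{-1}$ is literally the matrix inverse of $\expodot A$. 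The main obstacle is this factorial bookkeeping: reconciling the block-matrix $\odot$ with the finite $\odot$ (whose action on identity matrices is captured by Proposition \ref{odotproperties}, item 7) and verifying that every combinatorial weight cancels exactly against the $1/k!$ coefficients.
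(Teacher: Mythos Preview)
Your proposal is correct. The paper states Lemma~\ref{expprops0} without proof (the $\square$ at the end of the statement signals omission, and item~3 is simply cited from \cite{bekbaevSept2009}), so there is nothing to compare against: you are supplying the routine verification the author deemed unnecessary to spell out. Your arguments for items~1 and~2 are the expected ones, relying only on Proposition~\ref{odotproperties} and Lemma~\ref{odeprods}\ref{leibnizlem}; note only that the Leibniz rule in the latter is stated for finite blocks, so strictly speaking you need one extra line observing it extends to $\Mat^{n,m}$ via Definition~\ref{defSymProd}(ii). Your handling of item~4 is also correct: the key factorial bookkeeping you flag---that the infinite-matrix power $\Id_n^{\odot k}$ has sole non-zero block $k!\,\Id_{d_{n,k}}$ at bidegree $(k,k)$, so that $\expodot\Id_n$ is the block-diagonal identity---is exactly the point, and it follows from the $\binom{j}{j_1}$ weight in Definition~\ref{defSymProd}(ii) combined with Proposition~\ref{odotproperties}(g) for the finite block.
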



For examples and properties, see \cite[\S 3.1]{SimonLVE}.

\subsection{Application to power series} \label{powerseriessection}

\begin{lem}[\protect{\cite[Lemma 3.7]{SimonLVE}}] \label{expformalseries}
If $F=F_1\times \dots \times F_m$ is a \emph{vector} power series, adequate $M_F^{1,i}\in\Mat_{m,n}^{1,i}\p{K}$ render
\[
\fbox{$F\p{\bm{x}} = M_F \expodot X$}\quad \mbox{ where }
M_F := \p{\begin{tabular}{ccc|c}
$\cdots$  & $M_F^{1,2}$ &  $M_F^{1,1}$ & $M_F^{1,0}$ \\\hline
$\cdots$  & $0$ &  $0$ & $0$ 
\end{tabular}}\in\Mat^{m,n}.
\]
Following Definition \ref{defexp}, write $F\p{\bm{x}} = M_F \expodot \bm{x}$ if it poses no clarity issue. $\square$
\end{lem}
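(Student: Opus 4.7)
The plan is to identify both sides of the claimed identity as the same formal power series in the coordinates of $\bm{x}$, which forces $M_F$ to be (up to normalisation by $\mathbf{k}!$) the array of partial derivatives of $F$ at the origin. First I would view $\bm{x}\in K^n=\Mat_n^{1,0}\p{K}$ as embedded in $\Mat^{n,n}$ with its unique nonzero block in position $(1,0)$, so that Definition \ref{defexp} gives $\expodot\bm{x}=\sum_{k\ge 0}\tfrac{1}{k!}\bm{x}^{\odot k}$, a block column vector in $\Mat^{n,n}$ whose $(k,0)$ block is $\tfrac{1}{k!}\bm{x}^{\odot k}\in\Sym^k K^n$.

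The key technical step is the multinomial-style expansion
\[
\bm{x}^{\odot k}=\sum_{\av{\mathbf{k}}=k}\binom{k}{\mathbf{k}}\bm{x}^{\mathbf{k}}\,\bm{e}^{\odot\mathbf{k}}\in\Sym^kK^n,
\]
with $\bm{e}^{\odot\mathbf{k}}=\bm{e}_1^{\odot k_1}\odots\bm{e}_n^{\odot k_n}$, which I would prove by a short induction on $k$ using the commutativity and distributivity of $\odot$ in Proposition \ref{odotproperties} applied to $\bm{x}=\sum_i x_i\bm{e}_i$. Dividing by $k!$ then identifies the $\mathbf{k}$-coordinate of the $(k,0)$-block of $\expodot\bm{x}$ as simply $\bm{x}^{\mathbf{k}}/\mathbf{k}!$.

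Next I would Taylor-expand each component $F_j\p{\bm{x}}=\sum_{\mathbf{k}}\tfrac{1}{\mathbf{k}!}\p{\partial^{\mathbf{k}}F_j}\p{\bm{0}}\bm{x}^{\mathbf{k}}$ and read off $M_F^{1,k}$ by coefficient matching: it must be the $m\times d_{n,k}$ matrix whose $(j,\mathbf{k})$-entry is $\p{\partial^{\mathbf{k}}F_j}\p{\bm{0}}$, i.e.\ the lexicographic differential $F^{(k)}\p{\bm{0}}$ from \eqref{lexdef} stacked by component $j$. With every other block of $M_F$ set to zero, the block multiplication $M_F\expodot\bm{x}$ is supported only in position $(1,0)$, where it equals $\sum_k M_F^{1,k}\,\tfrac{1}{k!}\bm{x}^{\odot k}=F\p{\bm{x}}$ term by term, matching the Taylor series coefficient by coefficient.

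I do not anticipate any serious obstacle: the only step with real content is the multinomial identity in the second paragraph, while the rest is bookkeeping with the block conventions of \S \ref{symsection}. The lemma is essentially a dictionary translating formal vector power series into the $\odot$-exponential calculus, and the proof merely writes that dictionary out explicitly.
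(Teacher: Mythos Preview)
Your proof plan is correct and is precisely the natural direct verification: expand $\expodot X$ block by block, use the multinomial identity for $\bm{x}^{\odot k}$, and match coefficients with the Taylor expansion of $F$. Note, however, that the paper does not actually prove this lemma here---it is imported verbatim from \cite[Lemma 3.7]{SimonLVE} and marked with a $\square$ at the end of the statement---so there is no in-paper argument to compare against; what you have written would serve perfectly well as the omitted proof.
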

Thus every formal power series $F\in K\qu{\qu{\bm{x}}}$, $\bm{x}=\p{x_1,\dots,x_n}$ can be expressed in the form 
$M_F\expodot{\bm{x}}$, where 
\[
M_F := \p{\begin{tabular}{ccc|c}
$\cdots$  & $M_F^{1,2}$ &  $M_F^{1,1}$ & $M_F^{1,0}$ \\\hline
$\cdots$  & $0$ &  $0$ & $0$ 
\end{tabular}}\in\Mat^{1,n}\p{K}, \qquad X := \p{\begin{tabular}{c|c}
  $0$     & $\bm{x}$ \\ \hline
  $0$ & $0$ 
\end{tabular}}.
\]

\begin{equation} \label{Fjet}
M_F = J_F+M_F^{1,0} := \p{\begin{tabular}{ccc|c}
$\cdots$  & $M_F^{1,2}$ &  $M_F^{1,1}$ & $0$ \\\hline
$\cdots$  & $0$ &  $0$ & $0$ 
\end{tabular}}
+  \p{\begin{tabular}{c|c}
   $0$ & $M_F^{1,0}$ \\\hline
   $0$ & $0$ 
\end{tabular}}.
\end{equation}
See \cite[\S 3.2]{SimonLVE} for more details.

\subsection{Higher-order variational equations}\label{structure}

See \cite{SimonLVE} for proofs and further elaboration on everything summarized below:

\begin{nota}\label{notationc}
For every set of indices $1\le i_1\le \dots \le i_r$ such that $ \sum_{j=1}^ri_j = k$, $c^k_{i_1,\dots,i_r}$ is defined as the amount of totally ordered partitions of a set of $k$ elements among subsets of sizes $i_1,\dots,i_r$:
{\small\begin{equation} \label{ck}
c_{i_1,\dots,i_j} = c^k_{i_1,\dots,i_j} = \frac{\binom{k}{i_1\,i_2\,\cdots\,i_j}}{n_1!\cdots n_m!}, \quad
\left\{ \!\begin{array}{l}\p{i_1,\dots,i_j} = \p{k_1\stackrel{n_1}{\dots}k_1,\cdots,k_m\stackrel{n_m}{\dots}k_m}, \\ 1\le k_1<k_2<\dots< k_m.\end{array} \right.
\end{equation}}
\end{nota}

\begin{prop}\label{nrVE}
Let $K=\nc\p{\bm\phi}$ and $A_k$, $ Y_k$ be  $\partial_k X\p{\bm\phi}$, $\partial^k_z \psi\p{t,\bm
\phi}$ minus crossed terms; let 
$A=J_X^\phi,Y=J_\phi$ be the derivative jets for $X$ and $\psi$ at $\bm\phi$, with $A_k$, $Y_k$ as blocks. Then, if $c^k_{\mathbf{i}}=\# \brr{\mbox{ordered $i_1,\dots,i_r$-partitions of $k$ elements}}$,
\begin{equation} \label{VEkredux} \tag{${\mathrm{VE}^k_\phi}$}
\dot{\overbracket[0.5pt]{Y_k}} =  \sum_{j=1}^k A_j \sum_{\av{\mathbf{i}} = k} c^k_{\mathbf{i}}{\small Y_{i_1}\odot Y_{i_2} \odot \cdots \odot Y_{i_j}},\quad \mbox{$k\ge 1$}.
\end{equation}
\end{prop}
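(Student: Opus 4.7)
The strategy is to differentiate the flow identity $\dot\psi(t,z) = X(\psi(t,z))$ exactly $k$ times in $z$ and then evaluate at $z=\bm\phi(t)$. On the left-hand side, $t$- and $z$-derivatives commute, and the ``minus crossed terms'' convention built into the definition of $Y_k$ (as in \S\ref{powerseriessection}) converts the multi-indexed array of mixed partials into the single symmetric-power block $\dot{Y_k}$. The heart of the matter thus reduces to computing the $k$-th $z$-derivative of $X\circ\psi$ evaluated at $\bm\phi$.

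For the right-hand side, I would invoke a multivariate Faà di Bruno formula: the $k$-th total derivative of $X(\psi(t,z))$ expands as a sum indexed by set partitions $\pi = \{B_1,\ldots,B_j\}$ of $\{1,\ldots,k\}$, each partition contributing $\partial^j X(\psi)$ evaluated on the tensor of the $\partial^{|B_s|}_z\psi$, $s=1,\ldots,j$. By the commutativity and multilinearity of $\odot$ (Proposition \ref{odotproperties}), all set partitions with the same multiset $\{i_1,\ldots,i_j\}$ of block sizes collapse to equal symmetric-product terms, so one can reindex the sum by the profile $\mathbf{i}$. The number of ordered set partitions of $\{1,\ldots,k\}$ with block-size profile $(i_1,\ldots,i_j)$ having repetition multiplicities $(n_1,\ldots,n_m)$ is exactly $\binom{k}{i_1,\ldots,i_j}/(n_1!\cdots n_m!)$, which is precisely the coefficient $c^k_{\mathbf{i}}$ of Notation \ref{notationc}. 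After substituting $z=\bm\phi$, so that $\partial^j X(\bm\phi)=A_j$ and $\partial^{i_s}_z\psi(t,\bm\phi)=Y_{i_s}$, the identity \eqref{VEkredux} drops out.

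The main obstacle is reconciling two layers of combinatorics: the multinomial denominators internal to the symmetric product $\odot$ itself (visible in \eqref{SymProd} and \eqref{multipleproduct}) and those arising in the Faà di Bruno enumeration of set partitions. One must track these normalizations carefully in order to land on exactly $c^k_{\mathbf{i}}$ rather than any of the several similar-looking multinomial variants.

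A cleaner alternative route, which I would probably adopt if the direct Faà di Bruno accounting became unwieldy, is induction on $k$: differentiate the previously established equation for $Y_{k-1}$ in $t$, apply the Leibniz rule for $\odot$ (Lemma \ref{odeprods}(i)) and the chain rule to the occurrences of $A_j(\bm\phi)$, then repackage using the symmetric-product identities from Lemma \ref{anyprodslem}. The base case $k=1$ is exactly the classical variational equation \eqref{VE}, and the inductive step reduces the combinatorial verification to checking that the Pascal-type identity for $c^k_{\mathbf{i}}$ in terms of the $c^{k-1}_{\mathbf{i}'}$ matches the recursion produced by $\partial_t$.
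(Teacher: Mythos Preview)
The paper does not actually prove Proposition~\ref{nrVE}: the entire block of results in \S\ref{structure} is prefaced by ``See \cite{SimonLVE} for proofs and further elaboration on everything summarized below,'' and no argument for \eqref{VEkredux} appears in the present text. So there is no in-paper proof to compare your proposal against.

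That said, your approach is sound and is essentially the standard one. Differentiating the flow identity $\dot\psi=X\circ\psi$ $k$ times in $z$ and invoking the multivariate Fa\`a di Bruno formula is precisely how these equations are derived; your identification of $c^k_{\mathbf{i}}$ with the number of set partitions of $\{1,\dots,k\}$ having block-size profile $\mathbf{i}$ matches Notation~\ref{notationc} exactly. Your caveat about reconciling the internal multinomial normalisation of $\odot$ (equations \eqref{SymProd}, \eqref{multipleproduct}) with the Fa\`a di Bruno multiplicities is the only genuinely delicate point, and you flag it correctly. The inductive alternative you sketch is also viable and is closer in spirit to how \cite{SimonLVE} organises the argument, since that reference builds the $\odot$-calculus first and then reads off \eqref{VEkredux} from the Leibniz rule (Lemma~\ref{odeprods}) and the block structure of $\expodot Y$.
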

\begin{lem}\label{AZproperties}
Let $Y\in\Mat\p{K}$ equal to zero outside of block row $_{1,k}$, $k\ge 1$:
\begin{equation} \label{Ydisplay}
Y:= \p{\begin{tabular}{cccc|c}
$\cdots$ &  $Y_3$ & $Y_2$ &  $Y_1$ & $0$ \\ \hline
$0$ &  $0$ & $0$ &  $0$ & $0$ 
\end{tabular}}, \qquad Y_i\in\Mat^{1,i}_{n}.
\end{equation}
Let $Z_{r,s}$, $s, r \ge 1$, be the corresponding block in $\expodot Y$. Then, 
\begin{enumerate}
\item Row block $r$ in $\expodot Y$ is recursively obtained in terms of row blocks $1$ and $r-1$:
\begin{equation}\label{expZrs}
Z_{r,s} = \frac1r \sum_{j=1}^{s-r+1}\binom{s}{j}Y_j\odot Z_{r-1,s-j}.
\end{equation}
In particular, $Z_{r,r}=Y_1^{\odot r}$ and $Z_{r,s}=0_{d_{n,r},d_{n,s}}$ whenever $r>s$. 
\item Using Notation \ref{notationc} and \eqref{ck}, for every $s\ge r$
\begin{equation} \label{Zrkdef} Z_{r,s}=\sum_{i_1+\dots+i_r=s} c^s_{i_1,\dots,i_r}Y_{i_1}\odot Y_{i_2} \odot \cdots \odot Y_{i_r}.
\end{equation}

\end{enumerate}
\end{lem}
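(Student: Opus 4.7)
\textbf{Proof plan for Lemma \ref{AZproperties}.}

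The starting observation is that since $Y$ has nonzero blocks only in block row $1$, an easy induction using formula \eqref{SymProdInf} shows that $Y^{\odot m}$ is supported in block row $m$ only. Consequently, only the $m=r$ term in $\expodot Y = \sum_{m\ge 0} \tfrac{1}{m!} Y^{\odot m}$ contributes to row block $r$, giving $Z_{r,s} = \tfrac{1}{r!}(Y^{\odot r})_{r,s}$. Writing $Y^{\odot r} = Y^{\odot (r-1)} \odot Y$ and applying \eqref{SymProdInf} with indices $(i,j)=(r,s)$, the only nonvanishing summands have $i_1 = r-1$ (so the remaining factor lives in row $1$ of $Y$), which yields
\[
(Y^{\odot r})_{r,s} \;=\; \sum_{j_1} \binom{s}{j_1}\, (Y^{\odot(r-1)})_{r-1,j_1} \odot Y_{1,s-j_1}.
\]
Dividing by $r!$, rewriting via $Z_{r-1,\cdot}$, using the symmetry $\binom{s}{s-j}=\binom{s}{j}$ and commutativity of $\odot$, and reindexing $j:=s-j_1$ produces the asserted recursion \eqref{expZrs}; the ranges on $j$ reduce to $1\le j\le s-r+1$ because $Y_j$ vanishes for $j=0$ and $Z_{r-1,s-j}$ vanishes when $s-j<r-1$. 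The boundary identities $Z_{r,r}=Y_1^{\odot r}$ and $Z_{r,s}=0$ for $r>s$ then follow by an immediate induction from the recursion (or from the fact that $i_1+\cdots+i_r=s$ with all $i_k\ge 1$ forces $s\ge r$, with equality only when every $i_k=1$).

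For part (2), I would induct on $r$. The base $r=1$ gives $Z_{1,s}=Y_s$, and the right-hand side of \eqref{Zrkdef} reduces to the single term with $i_1=s$ and $c^s_s=1$. Assuming the formula at level $r-1$, substitute it into \eqref{expZrs} to obtain
\[
Z_{r,s} \;=\; \frac{1}{r}\sum_{j=1}^{s-r+1}\binom{s}{j}\,Y_j \odot \!\!\sum_{j_1+\cdots+j_{r-1}=s-j} \!\! c^{s-j}_{j_1,\ldots,j_{r-1}}\, Y_{j_1}\odot\cdots\odot Y_{j_{r-1}},
\]
and reorganize the double sum by the unordered multiset $\{i_1,\ldots,i_r\}$ with $i_1+\cdots+i_r=s$. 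The remaining task is the combinatorial identity
\[
\frac{1}{r}\sum_{k=1}^{r}\binom{s}{i_k}\,c^{s-i_k}_{i_1,\ldots,\widehat{i_k},\ldots,i_r} \;=\; c^s_{i_1,\ldots,i_r},
\]
where the summation runs over the distinct values of $i_k$ (weighted by their multiplicities, so that the outer $\tfrac{1}{r}$ correctly accounts for repeated parts). This is where the bookkeeping is delicate: the binomial $\binom{s}{i_k}$ chooses the $i_k$ elements of the removed block, and $c^{s-i_k}_{\cdots}$ counts partitions of what remains, but one must track carefully how the multiplicity factor $n_1!\cdots n_m!$ in $c^s_{i_1,\ldots,i_r}$ (see \eqref{ck}) interacts with the choice of which occurrence of a repeated part is being singled out, and confirm that the $\tfrac{1}{r}$ restores the correct normalization.

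The routine part is the recursion in (1), which is a direct unpacking of \eqref{SymProdInf}. The main obstacle is the combinatorial identity in (2): a clean way to discharge it is to read both sides as counting ordered partitions of $[s]$ into blocks of sizes $i_1,\ldots,i_r$, where on the left one marks a distinguished block (contributing the factor $r$ on the other side) and then partitions the complement. Either a bijective argument along these lines, or a direct manipulation of the multinomial expansion $\binom{s}{i_1,\ldots,i_r} = \binom{s}{i_k}\binom{s-i_k}{i_1,\ldots,\widehat{i_k},\ldots,i_r}$ together with the multiplicity-factor definition \eqref{ck}, will close the induction.
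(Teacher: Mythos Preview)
The paper does not actually prove Lemma \ref{AZproperties}: the entire subsection \S\ref{structure} is prefaced by ``See \cite{SimonLVE} for proofs and further elaboration on everything summarized below,'' and the lemma is stated without argument. So there is no in-paper proof to compare against; I can only assess whether your argument stands on its own.

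Your treatment of part (1) is correct and is exactly the natural approach: the support claim for $Y^{\odot m}$, the identification $Z_{r,s}=\tfrac{1}{r!}(Y^{\odot r})_{r,s}$, and the unwinding of \eqref{SymProdInf} all go through as you describe, and the range $1\le j\le s-r+1$ is justified for the right reasons.

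For part (2) your inductive strategy is valid, and the combinatorial identity you isolate is correct, but you are making it slightly harder than necessary. You already have $Z_{r,s}=\tfrac{1}{r!}(Y^{\odot r})_{r,s}$; iterating \eqref{SymProdInf} $r-1$ times directly (rather than passing through the recursion \eqref{expZrs}) gives
\[
(Y^{\odot r})_{r,s}=\sum_{\substack{j_1,\ldots,j_r\ge 1\\ j_1+\cdots+j_r=s}}\binom{s}{j_1,\ldots,j_r}\,Y_{j_1}\odot\cdots\odot Y_{j_r},
\]
the multinomial arising from the telescoping product $\binom{s}{j_r}\binom{s-j_r}{j_{r-1}}\cdots$. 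Grouping the ordered tuples by their underlying multiset $\{k_1^{n_1},\ldots,k_m^{n_m}\}$ contributes a factor $\tfrac{r!}{n_1!\cdots n_m!}$, and together with the outer $\tfrac{1}{r!}$ this is exactly $c^s_{i_1,\ldots,i_r}$ as in \eqref{ck}. This bypasses the ``delicate bookkeeping'' you flag and avoids having to verify the intermediate identity $\tfrac{1}{r}\sum_k\binom{s}{i_k}c^{s-i_k}_{\ldots}=c^s_{\ldots}$ separately. Either route is fine; the direct one is shorter.
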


\begin{nota}\label{notaAY}
$K:=\nc\p{\bm\phi}$, 
$A_i:=X^{\p{i}}\!\p{\bm{\phi}}$, $ Y_i := \mathrm{lex}\p{\frac{\partial^i }{\partial \bm{z}^i}\varphi{\p{t,\bm{\phi}}}}$ 
and, per Lemma \ref{AZproperties},
\begin{equation} \label{PhiLVE} 
\Upsilon_1=Y_1,\qquad \Upsilon_k =  \p{
\begin{tabular}{ccc}
$Z_{k,k}$ & \\
\cline{2-3} $Z_{k-1,k}$  & \multicolumn{2}{|c|}{}\\
\vdots  & \multicolumn{2}{|c|}{$\phantom{a}\Upsilon_{k-1}\phantom{a}$} \\
$Z_{1,k}$  & \multicolumn{2}{|c|}{} \\\cline{2-3} 
\end{tabular}
}, \quad k\ge 2,\end{equation}
formed by the first $k$ block rows and columns in $\Upsilon=\expodot Y$. Define $A,Y\in \Mat\p{K}$ as in Lemma \ref{AZproperties} with the above $A_i$, $Y_i$ as blocks.
Denote the canonical basis on $K^n$ (meaning the set of columns of $\Id_n$) by $\brr{\bm{e}_1,\dots,\bm{e}_n}$. 
\end{nota}

\begin{prop}[Explicit version of $\mathrm{LVE}_{\phi}^k$] \label{LVEprop} Still following Notation \ref{notaAY}, the infinite system
\begin{equation} \label{LVE}\tag{$\mathrm{LVE}_{\phi}$}
\fbox{$\dot X = A_{\mathrm{LVE}_{\phi}}X,$} \qquad\qquad A_{\mathrm{LVE}_{\phi}}:=A\odot \expodot \Id_n,
\end{equation}
has $\Upsilon:=\expodot Y$ as a solution matrix. Hence, for every $k\ge 1$, 
\begin{enumerate}
\item the lower-triangular recursive $D_{n,k}\times D_{n,k}$ form for $\mathrm{LVE}_{\phi}^k$ is $\dot{Y} = A_{\mathrm{LVE}_{\phi}^k}Y$, its system matrix being obtained from the first $k$ row and column blocks of $A_{\mathrm{LVE}_{\phi}}$:
\begin{equation} \label{ALVE}  A_{\mathrm{LVE}_{\phi}^k} =  \p{
\begin{tabular}{ccc}
$\binom{k}{k-1}A_1\odot \Id_n^{\odot {k-1}}$ & \\
\cline{2-3} $\binom{k}{k-2}A_2\odot \Id_n^{\odot k-2}$  & \multicolumn{2}{|c|}{}\\
\vdots  & \multicolumn{2}{|c|}{$A_{\mathrm{LVE}_{\phi}^{k-1}}$} \\
$\binom{k}{0} A_k$  & \multicolumn{2}{|c|}{} \\\cline{2-3} 
\end{tabular}
} ,\end{equation}
\item and the principal fundamental matrix for $\mathrm{LVE}_{\phi}^k$ is $\Upsilon_k$ from $\expodot Y$ in Notation \ref{notaAY}.
\end{enumerate}
\end{prop}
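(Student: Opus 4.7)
The plan is to first establish the infinite-dimensional identity $\dot{\Upsilon} = A_{\mathrm{LVE}_\phi}\Upsilon$ in $\Mat$ block-by-block, and then read off statements (1) and (2) from the resulting lower-triangular structure and the initial conditions.

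To begin, apply Lemma~\ref{expprops0}(2) to obtain $\partial \Upsilon = (\partial Y)\odot \Upsilon$. Proposition~\ref{nrVE} together with the closed form \eqref{Zrkdef} rewrites the variational equation as $\dot Y_s = \sum_{j=1}^{s} A_j Z_{j,s}$, which is exactly the $(1,s)$-block of the ordinary matrix product $A\cdot\Upsilon$ since $A$ is concentrated in row $1$ with $A_{1,j}=A_j$. Consequently $\partial Y = A\cdot\Upsilon$, and the whole claim reduces to the block identity
\[
 (A\cdot\Upsilon)\odot\Upsilon \;=\; (A\odot\expodot\Id_n)\cdot\Upsilon.
\]

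This identity is the heart of the proof and where I anticipate the main technical effort. A short bookkeeping exercise using Definition~\ref{defSymProd}(ii) together with the computation $\Id_n^{\odot k}=k!\,\Id_{d_{n,k}}$ (so that $(\expodot\Id_n)_{r,r}=\Id_{d_{n,r}}$) yields
\[
 (A\odot\expodot\Id_n)_{r,s} \;=\; \binom{s}{r-1}\,A_{s-r+1}\odot \Id_n^{\odot\, r-1}, \qquad 1\le r\le s,
\]
matching the block entries claimed in \eqref{ALVE}. I would then expand the $(r,s)$-block of both sides of the identity above over ordered $r$-partitions $(i_1,\dots,i_r)$ of $s$, substituting \eqref{Zrkdef} for the $Z$-blocks. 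The LHS produces atoms of the form $\bigl[A_t(Y_{i_1}\odots Y_{i_t})\bigr]\odot Y_{i_{t+1}}\odots Y_{i_r}$ weighted by the binomial $\binom{s}{j_1}$ from Definition~\ref{defSymProd}(ii) and by the partition coefficients $c^{j_1}_{\mathbf i}\,c^{s-j_1}_{\mathbf k}$ inherited from $(\mathrm{VE}^k_\phi)$; the RHS produces the same atoms with the action of $A_{u-r+1}$ distributed symmetrically among the $r$ slots of a symmetric product of $Y$-blocks. Matching the two is precisely the content of Lemma~\ref{anyprodslem}, most notably formula \eqref{prodsmoregeneral}, which rewrites $(A\odot\Id_n^{\odot r-1})(X_1\odot\cdots\odot X_r)$ as a sum over subsets of size $r-1$, together with \eqref{anyprodsgeneral}. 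The main obstacle will be tracking the multinomial coefficients $c^{s}_{i_1,\dots,i_r}$ against binomials without factorial slips, i.e.\ verifying that $c^s_{\mathbf l}$ decomposes as $\binom{s}{j_1}\,c^{j_1}_{\mathbf i}\,c^{s-j_1}_{\mathbf k}$ summed over the admissible interleavings.

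Once $\dot{\Upsilon} = A_{\mathrm{LVE}_\phi}\Upsilon$ is secured, assertion~(1) is immediate: the block formula vanishes for $s<r$, so $A_{\mathrm{LVE}_\phi}$ is block lower-triangular in the grading that places higher symmetric-power blocks on top, and its restriction to the first $k$ block rows and columns is an autonomous subsystem with matrix \eqref{ALVE}. Its first block column reads $\bigl(\binom{k}{k-j}A_j\odot\Id_n^{\odot\, k-j}\bigr)_{j=1}^{k}$ by direct substitution, and the remaining $(k-1)\times(k-1)$ block is $A_{\mathrm{LVE}_\phi^{k-1}}$ by nested truncation. For assertion~(2), the initial condition $\psi(t_0,z)=z$ forces $Y_1(t_0)=\Id_n$ and $Y_i(t_0)=0$ for $i\ge 2$; inserted into \eqref{Zrkdef}, every ordered partition of $s>r$ contains some $i_\alpha\ge 2$, giving $Z_{r,s}(t_0)=0$, while $Z_{r,r}(t_0)=Y_1(t_0)^{\odot r}=\Id_{d_{n,r}}$. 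Hence $\Upsilon_k(t_0)=\Id$, identifying $\Upsilon_k$ as the principal fundamental matrix of $\mathrm{LVE}_\phi^k$.
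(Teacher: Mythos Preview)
The paper does not actually supply a proof of this proposition here: the opening line of \S\ref{structure} defers all proofs in that subsection to \cite{SimonLVE}. So there is no in-paper argument to compare against, and your outline stands on its own.

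Your strategy is sound. The reduction via Lemma~\ref{expprops0}(2) to $\partial Y = A\cdot\Upsilon$ (using Proposition~\ref{nrVE} and \eqref{Zrkdef}) and then to the block identity $(A\Upsilon)\odot\Upsilon = (A\odot\expodot\Id_n)\Upsilon$ is exactly the right decomposition, and your handling of items (1) and (2) from the triangular structure and the initial condition is clean. One small slip: the finite symmetric power satisfies $\Id_n^{\odot k}=\Id_{d_{n,k}}$, not $k!\,\Id_{d_{n,k}}$ (apply Proposition~\ref{odotproperties}(g) with $A=\Id_n$); the factorial you have in mind appears only when computing the $(k,k)$-block of the \emph{infinite} power $(\Id_n)^{\odot_\infty k}$ via \eqref{SymProdInf}, where the $\binom{j}{j_1}$ factors accumulate to $k!$ and then cancel against the $1/k!$ in $\expodot$. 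Your conclusion $(\expodot\Id_n)_{r,r}=\Id_{d_{n,r}}$ and the block formula $(A\odot\expodot\Id_n)_{r,s}=\binom{s}{r-1}A_{s-r+1}\odot\Id_n^{\odot\,r-1}$ are both correct.

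The one place that remains a sketch is the combinatorial matching of coefficients in $(A\Upsilon)\odot\Upsilon$ versus $(A\odot\expodot\Id_n)\Upsilon$. Your plan to invoke \eqref{prodsmoregeneral} is the right lever: applied with $m=u$, $j=u-r+1$ and $X_i=Y_{i_\ell}$ it turns $\binom{u}{r-1}(A_{u-r+1}\odot\Id^{\odot r-1})Z_{u,s}$ into a sum over $(u{-}r{+}1)$-subsets, and what remains is the purely numerical identity $c^{s}_{\mathbf{i}} = \binom{s}{j_1}\,c^{j_1}_{\mathbf{a}}\,c^{s-j_1}_{\mathbf{b}}$ summed over splittings of the ordered partition $\mathbf{i}$ into $\mathbf{a}$ and $\mathbf{b}$; this is a routine multinomial decomposition once you unwind \eqref{ck}, but you should carry it out rather than assert it. Alternatively, an induction on $r$ using the recursion \eqref{expZrs} for $Z_{r,s}$, Leibniz \eqref{leibniz}, and the already-established row-$1$ equation avoids global bookkeeping and may be cleaner.
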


The construction of an infinite matrix $\Upsilon_{\mathrm{LVE}_\phi}=\expodot Y$ follows in such a way, that the first $d_{n,k}:=\binom{n+k-1}{n-1}$ rows and columns are a principal fundamental matrix for $\mathrm{LVE}_\phi^k$. The definition is recursive and amenable to symbolic computation. 
\begin{example}
For instance, for $k=5$ we have
\[
A_{\mathrm{LVE}_{\phi}^5}  = \p{
\begin{array}{ccccc}
5 A_1\odot \Id_n^{\odot 4} & & & & \\
10 A_2\odot \Id_n^{\odot 3}  & 4 A_1\odot \Id_n^{\odot 3}  & & & \\
10 A_3\odot \Id_n^{\odot 2}  & 6 A_2\odot \Id_n^{\odot 2} & 3 A_1\odot \Id_n^{\odot 2} & & \\
5 A_4 \odot \Id_n & 4 A_3 \odot \Id_n & 3 A_2 \odot \Id_n & 2 A_1 \odot \Id_n & \\
A_5 & A_4 & A_3 & A_2 & A_1 
 \end{array}
 },
\]
and the principal fundamental matrix $\Upsilon_5$ is 
{\small  \begin{equation} \label{PhiLVE5}
 \p{
\begin{array}{ccccc}
Y_1^{\odot 5} & & & & \\
10Y_1^{\odot 3}\odot Y_2 & Y_1^{\odot 4}  & & & \\
10Y_1^{\odot 2}\odot Y_3 + 15Y_1\odot Y_2^{\odot 2}& 6 Y_1^{\odot 2}\odot Y_2 & Y_1^{\odot 3} & & \\
10 Y_2\odot Y_3 + 5Y_1\odot Y_4 & 4 Y_1\odot Y_3 + 3 Y_2 \odot Y_2 & 3 Y_1\odot Y_2 & Y_1^{\odot 2} \\
Y_5 & Y_4 & Y_3 & Y_2 & Y_1 
  \end{array}
 },
\end{equation}}hence \eqref{VEkredux} for $k=5$ is the lowest row in $A_{\mathrm{LVE}_{\phi}^5}$ times the leftmost column in $\Upsilon_5 $.
\end{example}

\subsection{First integrals and higher-order variational equations}\label{firstintegrals}

Let $F:U\subseteq \nc^n\to \nc^n$ be a holomorphic function and $\bm{\phi}:I\subset \nc \to U$. Firstly, the flow $\varphi\p{t,\bm{z}}$ of $X$ admits, at least formally, Taylor expansion \eqref{taylorexp} along $\bm{\phi}$ which is expressible as
\begin{equation} \label{taylorphi}
\varphi\p{t,\bm{\phi}+\bm{\xi}} = \bm{\phi} + Y_1\bm{\xi}  + \frac12 Y_2 \bm{\xi}^{\odot 2} + \dots = \bm{\phi}+J_\phi \expodot\bm{\xi}, 
\end{equation}
where $J_\phi$ is the jet for flow $\varphi\p{t,\cdot}$ along $\bm\phi$, displayed as $Y$ in \eqref{Ydisplay} and defined in Notation \ref{notaAY} -- that is, the matrix
whose $\odot$-exponential $\Upsilon$ is a solution matrix for \eqref{LVE}. 
Secondly, the Taylor series of $F$ along $\bm{\phi}$
can be written, cfr. \cite[Lemma 2]{ABSW} and Notation \ref{notalex}, 
\begin{equation} \label{taylorabsw} F\p{\bm{y}+\bm{\phi}}=F\left(\bm{\phi}\right)+\sum^{\infty}_{m=1} \frac{1}{m!}\left\langle F^{(m)}\left(\bm{\phi}\right)\,,\, \Sym^m \bm{y}\right\rangle. 
\end{equation}
Lemma \ref{expformalseries} and \eqref{Fjet} trivially implies  \eqref{taylorabsw} can
be expressed as 
$F\p{\bm{y}+\bm{\phi}} = M^\phi_F \expodot \bm{y}$, where
\[
M^\phi_F = J^\phi_F+F^{\p{0}}\!\p{\bm{\phi}} := \p{\begin{tabular}{ccc|c}
$\cdots$ &   $0$ &  $0$ & $0$ \\ 
$\cdots$  & $F^{\p{2}}\!\p{\bm{\phi}}$ &  $F^{\p{1}}\!\p{\bm{\phi}}$ & $F^{\p{0}}\!\p{\bm{\phi}}$ \\\hline
$\cdots$  & $0$ &  $0$ & $0$ 
\end{tabular}}\in\Mat^{1,n}\p{K},
\]
i.e. $J_F^\phi$ is the jet or horizontal strip of lex-sifted partial derivatives of $F$ at $\bm{\phi}$. 

Let 
 \begin{equation} \label{LVEstar}\tag{$\mathrm{LVE}_{\phi}^{\star}$}
\fbox{$\dot X = A_{\mathrm{LVE}^{\star}_{\phi}}X,$} \qquad\qquad A_{\mathrm{LVE}_{\phi}^\star}:=-\p{A\odot \expodot \Id_n}^T, 
\end{equation}
be the \textbf{adjoint} or \textbf{dual} variational system of \eqref{sys} along $\phi$.

It is immediate, upon derivation of equation $\Upsilon_k\Upsilon_k^{-1}=\Id_{D_{n,k}}$, that
\begin{lem}\label{adjoint}
$\p{\Upsilon^{-1}_k}^T$ is a principal fundamental matrix of $\p{\mathrm{LVE}_{\phi}^k}^\star$, $k\ge 1$, hence $\lim_k\p{\Upsilon_k^{-1}}^T$, is a solution to \eqref{LVEstar}. 
\end{lem}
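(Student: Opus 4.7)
The proof is essentially the classical derivation that $(Y^{-1})^T$ solves the adjoint of a linear system $\dot Y = A Y$, carried out block by block in the graded setting, together with a coherence check for the infinite-dimensional limit.

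\textbf{Step 1: the finite-order statement.} By Proposition \ref{LVEprop}, $\Upsilon_k$ is the principal fundamental matrix of $\mathrm{LVE}_\phi^k$, so $\dot\Upsilon_k = A_{\mathrm{LVE}_\phi^k}\Upsilon_k$ and $\Upsilon_k(t_0) = \Id_{D_{n,k}}$ at the chosen base point. Differentiating the identity $\Upsilon_k\,\Upsilon_k^{-1} = \Id_{D_{n,k}}$ and solving for the derivative of the inverse gives
\[
\frac{d}{dt}\bigl(\Upsilon_k^{-1}\bigr) \;=\; -\,\Upsilon_k^{-1}\,\dot\Upsilon_k\,\Upsilon_k^{-1} \;=\; -\,\Upsilon_k^{-1}\,A_{\mathrm{LVE}_\phi^k}.
\]
Transposing yields
\[
\frac{d}{dt}\bigl((\Upsilon_k^{-1})^T\bigr) \;=\; -\,A_{\mathrm{LVE}_\phi^k}^{\,T}\,(\Upsilon_k^{-1})^T,
\]
which is exactly the $k$-th truncation of \eqref{LVEstar}, since the matrix of $(\mathrm{LVE}_\phi^k)^\star$ is by definition the transpose of the restriction of $A\odot\expodot\Id_n$ to its first $D_{n,k}$ rows and columns, with a minus sign. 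The initial condition $(\Upsilon_k(t_0)^{-1})^T = \Id_{D_{n,k}}$ confirms that $(\Upsilon_k^{-1})^T$ is the \emph{principal} fundamental matrix.

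\textbf{Step 2: compatibility between orders and the limit.} By \eqref{PhiLVE} and \eqref{ALVE}, both $\Upsilon_k$ and $A_{\mathrm{LVE}_\phi^k}$ are block-lower-triangular, with $\Upsilon_{k-1}$ (resp.\ $A_{\mathrm{LVE}_\phi^{k-1}}$) sitting as the bottom-right principal block of $\Upsilon_k$ (resp.\ $A_{\mathrm{LVE}_\phi^k}$). Consequently the inverse $\Upsilon_k^{-1}$ is block-lower-triangular as well, and its bottom-right $D_{n,k-1}\times D_{n,k-1}$ block coincides with $\Upsilon_{k-1}^{-1}$. After transposition this becomes a coherent family of block-upper-triangular matrices whose top-left blocks stabilize as $k$ grows: the entries of $(\Upsilon_k^{-1})^T$ indexed by multi-indices of total order $\le j$ do not change once $k\ge j$. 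Hence the pointwise limit $\Upsilon^{-T}:=\lim_k(\Upsilon_k^{-1})^T$ is a well-defined element of $\Mat^{n,n}(K)$ in the sense of the paper.

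\textbf{Step 3: passing to the limit in the ODE.} Since $A_{\mathrm{LVE}_\phi^\star} = -(A\odot\expodot\Id_n)^T$ is block-upper-triangular, each individual block-entry of the product $A_{\mathrm{LVE}_\phi^\star}\,\Upsilon^{-T}$ is a \emph{finite} sum involving only finitely many blocks — the same finite sum that already appears in the $k$-th order identity obtained in Step 1, provided $k$ is large enough. Therefore the identity $\dot{(\Upsilon_k^{-1})^T} = -A_{\mathrm{LVE}_\phi^k}^{\,T}(\Upsilon_k^{-1})^T$ passes to the limit entrywise, yielding $\dot{\Upsilon}^{-T} = A_{\mathrm{LVE}_\phi^\star}\Upsilon^{-T}$, i.e.\ $\Upsilon^{-T}$ solves \eqref{LVEstar}.

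The only non-mechanical part of this plan is the bookkeeping in Step 2: one must verify that inversion of a block-lower-triangular matrix respects the filtration, so that the entries of $(\Upsilon_k^{-1})^T$ truly stabilize with $k$. This is a direct consequence of the recursive formula for the inverse of a block-lower-triangular matrix (each block of the inverse depends only on blocks of the same or strictly lower order), and so causes no difficulty.
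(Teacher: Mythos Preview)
Your proof is correct and follows exactly the approach the paper indicates: the paper dispatches the lemma in a single line, ``It is immediate, upon derivation of equation $\Upsilon_k\Upsilon_k^{-1}=\Id_{D_{n,k}}$,'' which is precisely your Step~1. Your Steps~2 and~3 on block-triangular compatibility and entrywise passage to the limit are more careful than the paper's bare ``hence $\lim_k(\Upsilon_k^{-1})^T$ is a solution to \eqref{LVEstar},'' but they add only rigor, not a different idea.
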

The following was proven in \cite{MoRaSi07a} and recounted in \cite[Lemma 7]{ABSW}, and was may now be expressed in a simple, compact fashion:
\begin{lem} \label{lemmanewproof}
Let $F$ and $\bm{\phi}$ be a holomorphic first integral and a non-constant solution of \eqref{sys} respectively. Let $V:=J_F^T$ be  the transposed jet of $F$ along $\bm{\phi}$. Then, $V$ is a solution of \eqref{LVEstar}.
\end{lem}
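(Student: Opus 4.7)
The plan is to exhibit a $t$-independent quantity built from $J_F^\phi$ and $\Upsilon$, then differentiate it. The first integral condition $F\circ\varphi(t,\cdot)=F$, evaluated at $\bm z=\bm\phi(0)+\bm\xi$ and expanded as a formal power series in $\bm\xi$, reads
\begin{equation*}
F\bigl(\bm\phi(t)+J_\phi(t)\expodot\bm\xi\bigr) \;=\; F\bigl(\bm\phi(0)+\bm\xi\bigr),
\end{equation*}
after using the flow expansion \eqref{taylorphi} on the left. I would then Taylor-expand both sides around their respective base points via the formula $F(\bm\phi+\bm\eta)=M_F^\phi\expodot\bm\eta$ from \S\ref{firstintegrals}.

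The key step is a chain-rule identity in the $\odot$-formalism: substituting the vector power series $\bm\eta=J_\phi(t)\expodot\bm\xi$ into $M_F^{\phi(t)}\expodot\bm\eta$ produces $M_F^{\phi(t)}\,\Upsilon(t)\expodot\bm\xi$, where $\Upsilon=\expodot Y$. To justify this, I would use Proposition \ref{odotproperties}(8), namely $(A\odot B)\bm v^{\odot(p+q)}=(A\bm v^{\odot p})\odot(B\bm v^{\odot q})$, applied iteratively to each power $\bm\eta^{\odot r}$, together with the block description \eqref{Zrkdef} of $Z_{r,s}$ in terms of symmetric products of the $Y_{i_j}$; this is precisely Faà di Bruno's formula repackaged, and it matches the definition of $\Upsilon=\expodot Y$. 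Comparing coefficients of $\bm\xi^{\odot k}$ for $k\ge 1$ (the $k=0$ coefficient merely restates $F(\bm\phi(t))=F(\bm\phi(0))$) then yields
\begin{equation*}
J_F^{\phi(t)}\,\Upsilon(t)\;=\;J_F^{\phi(0)},
\end{equation*}
a constant row vector in $t$.

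Finally, differentiating this identity and using $\dot\Upsilon=A_{\mathrm{LVE}_\phi}\Upsilon$ from Proposition \ref{LVEprop}, together with invertibility of $\Upsilon$, gives $\dot J_F^{\phi(t)}=-J_F^{\phi(t)}\,A_{\mathrm{LVE}_\phi}$; transposing and recalling $V=J_F^T$ yields $\dot V=-A_{\mathrm{LVE}_\phi}^{T}V$, which is exactly \eqref{LVEstar}.

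The main obstacle is the bookkeeping for the chain-rule identity $\expodot(J_\phi\expodot\bm\xi)=\Upsilon\expodot\bm\xi$: although intuitively clear (it is the statement that the jet of a composition is the $\odot$-product of jets), a clean proof requires checking that the coefficient of $\bm\xi^{\odot k}$ in $\frac{1}{r!}(J_\phi\expodot\bm\xi)^{\odot r}$ equals $Z_{r,k}$ as given by \eqref{Zrkdef}, which follows by collecting multi-indices $(i_1,\dots,i_r)$ with $\sum i_j=k$ and applying Proposition \ref{odotproperties}(8) and the combinatorial identity behind $c^k_{i_1,\dots,i_r}$ from Notation \ref{notationc}. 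Once that identity is in place, the rest of the argument is a one-line differentiation.
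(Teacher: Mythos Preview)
The paper does not supply its own proof of this lemma; it merely states that the result was proven in \cite{MoRaSi07a} and recounted in \cite[Lemma 7]{ABSW}. Your argument is correct and, in fact, is the natural proof adapted to the paper's $\odot$-formalism.

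One remark on what you flag as the ``main obstacle'': the composition identity $\expodot(J_\phi\expodot\bm\xi)=\Upsilon\,\expodot\bm\xi$ (equivalently, that the jet of a composition is the ordinary matrix product $J_F^{\phi(t)}\cdot\expodot J_\phi$) is precisely the content of Bekbaev's work \cite{bekbaevSept2009}, whose very title is ``A matrix representation of composition of polynomial maps'' and which the present paper already cites as the source of the $\odot$ machinery. So rather than re-deriving it from \eqref{Zrkdef} and Proposition~\ref{odotproperties}(8), you may simply invoke that reference. With the composition identity granted, your chain
\[
J_F^{\phi(t)}\,\Upsilon(t)=J_F^{\phi(0)}\ \Longrightarrow\ \dot J_F^{\phi}=-J_F^{\phi}A_{\mathrm{LVE}_\phi}\ \Longrightarrow\ \dot V=-A_{\mathrm{LVE}_\phi}^{T}V
\]
is exactly right and gives a self-contained proof that fits the paper's framework better than a bare citation.
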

Hence the issue, already considered in \cite{ABSW}, is whether some converse holds and a solution of the dual system $\left(\mathrm{LVE}^{k}_{\phi}\right)^{\star}$ is or can be the set of first $k$ terms of the formal (or perhaps even convergent) series form of a first integral. This was addressed in the aforementioned reference and was put in compact, explicit linearized form in \cite{SimonLVE}, and will be recounted below.

Define
\begin{equation} \label{jetA}
\widehat{A}:= \p{\begin{tabular}{ccc|c}
$\cdots$  &   $0$ &  $0$ &  $0$  \\
$\cdots$ &   $A_2$ &  $A_1$ & $A_0$ \\ \hline
$\cdots$ &  $0$ & $0$ &  $0$ 
\end{tabular}}, \qquad A_i:=X^{\p{i}}\!\p{\bm{\phi}}\in\Mat^{1,i}_n\p{K}, \qquad A_0:=X\p{\bm{\phi}}=\dot{\bm{\phi}}, 
\end{equation}
and let 
\begin{equation}\label{matA0}
\widehat{A}_{\mathrm{LVE}_{\phi}}:=\widehat{A}\odot \expodot \Id_n = \lim_k \widehat{A}_{\mathrm{LVE}^k_{\phi}} = \lim_k \p{\begin{tabular}{|c|c}
\cline{1-1} $\binom{k}{k}X^{\p{0}}\p{\bm{\phi}} \odot  \Id_n^{\odot k}$ & \\
\cline{1-2} $ \binom{k}{k-1}X^{\p{1}}\p{\bm{\phi}} \odot \Id_n^{\odot k-1}$ & \multicolumn{1}{c|}{\multirow{4}{*}{$ \widehat{A}_{\mathrm{LVE}^{k-1}_{\phi}} $}} \\
\cline{1-1} $\binom{k}{k-2}X^{\p{2}}\p{\bm{\phi}} \odot  \Id_n^{\odot k-2}$ & \multicolumn{1}{c|}{} \\
\cline{1-1} $\vdots$ &  \multicolumn{1}{c|}{} \\
\cline{1-1} $\binom{k}{0} X^{\p{k}}\p{\bm{\phi}} \odot \Id_n^{\odot 0}$ &  \multicolumn{1}{c|}{} \\
\hline
\end{tabular} 
}, 
\end{equation}
The following gave a compact form to {\cite[Th. 12]{ABSW}} in terms of $\odot$ and infinite matrices:
\begin{prop}[\protect{\cite[Prop. 6]{SimonLVE}}] \label{SeriesProp}
Let $F$, $\bm \phi$, $V=\p{\cdots \mid V_3\mid V_2 \mid V_1}$ as in Lemma \ref{lemmanewproof}. Then $\widehat{A}_{\mathrm{LVE}_{\phi}}^T V = 0$. More specifically, $\widehat{A}_{\mathrm{LVE}_\phi^{k-1}} \p{V_k\mid\cdots \mid  V_2 \mid V_1}=0$ for every $k\ge 1$, i.e.
\begin{equation} \label{kernelcondition}
\sum_{j=0}^{k-1} \binom{k-1}{j} \p{A_j\odot \Id_n^{k-1-j}}^T V_{k-j} = 0, \qquad \mbox{for every }k\ge 1. \quad\square
\end{equation}

\end{prop}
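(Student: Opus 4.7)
My plan is to deduce \eqref{kernelcondition} by combining two ingredients: Lemma \ref{lemmanewproof}, which contributes all summands with $j\ge 1$ through the LVE$^\star$ equation, and an independent chain-rule calculation of $\dot V_k$, which supplies the missing $j=0$ term involving $A_0 = X(\bm\phi)$.

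First, I would unpack Lemma \ref{lemmanewproof} by reading off the block structure of $A_{\mathrm{LVE}_\phi^k}$ from \eqref{ALVE}: its block in row $r$, column $s$ (with $1\le r\le s\le k$) is $\binom{s}{r-1}\,A_{s-r+1}\odot \Id_n^{\odot r-1}$. Transposing and re-indexing $j:=s-r+1$, the adjoint equation $\dot V = -A_{\mathrm{LVE}_{\phi}}^T V$ becomes, for each $s\ge 1$,
\[
\dot V_s + \sum_{j=1}^{s}\binom{s}{j}\bigl(A_j\odot\Id_n^{\odot s-j}\bigr)^T V_{s-j+1}=0.
\]

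Independently, I would compute $\dot V_s$ by differentiating each entry of $V_s=(F^{(s)}(\bm\phi))^T$ along $\bm\phi$. For a multi-index $\bm i$ with $|\bm i|=s$, the chain rule yields $\frac{d}{dt}\partial^{\bm i}F(\bm\phi) = \sum_m X_m(\bm\phi)\,\partial^{\bm i+\bm e_m}F(\bm\phi)$; unwinding Definition \ref{defSymProd} for $A_0\in\Mat_n^{1,0}$ acting on $\Id_n^{\odot s}$ gives the scalar-valued Leibniz-type identity $\partial_m F^{(s)}(\bm\phi) = F^{(s+1)}(\bm\phi)\bigl(\bm e_m\odot\Id_n^{\odot s}\bigr)$, and summing against the weights $(A_0)_m = X_m(\bm\phi)$ together with bilinearity of $\odot$ produces
\[
\dot V_s = \bigl(A_0\odot\Id_n^{\odot s}\bigr)^T V_{s+1}.
\]

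Substituting this expression for $\dot V_{k-1}$ into the LVE$^\star$-identity above (with $s$ replaced by $k-1$) yields \eqref{kernelcondition} for every $k\ge 2$. The remaining case $k=1$ is immediate: it reads $A_0^T V_1 = (F'(\bm\phi)\dot{\bm\phi})^T = \bigl(\frac{d}{dt}F(\bm\phi)\bigr)^T = 0$, which holds because $F$ is a first integral. The one delicate step is the $\odot$-bookkeeping behind the identity $\dot V_s = (A_0\odot\Id_n^{\odot s})^T V_{s+1}$; but because $F$ is scalar-valued, verifying the action of $A_0\odot\Id_n^{\odot s}$ on the lex-sifted basis reduces to a short check via \eqref{SymProd}, considerably simpler than the corresponding vector-valued identities needed for the flow jets $Y_k$.
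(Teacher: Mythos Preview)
Your argument is correct. The block identity you read off from \eqref{ALVE},
\[
\dot V_s+\sum_{j=1}^{s}\binom{s}{j}\bigl(A_j\odot\Id_n^{\odot s-j}\bigr)^{T}V_{s-j+1}=0,
\]
is exactly the $s$-th block row of $\dot V=-A_{\mathrm{LVE}_\phi}^{T}V$, and your chain-rule computation $\dot V_s=(A_0\odot\Id_n^{\odot s})^{T}V_{s+1}$ is the transpose of $\tfrac{d}{dt}F^{(s)}(\bm\phi)=F^{(s+1)}(\bm\phi)\,(A_0\odot\Id_n^{\odot s})$, which in turn follows from the elementary identity $\partial_{z_m}F^{(s)}=F^{(s+1)}(\bm e_m\odot\Id_n^{\odot s})$ (the scalar analogue of the commented identity \eqref{Zaux2} for $Y_k$). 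Plugging the second into the first with $s=k-1$ gives \eqref{kernelcondition} for $k\ge 2$, and your handling of $k=1$ is the usual $\nabla F(\bm\phi)\cdot\dot{\bm\phi}=0$.

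As for comparison: the present paper does \emph{not} prove Proposition~\ref{SeriesProp}; it is stated with a closing $\square$ and attributed to \cite[Prop.~6]{SimonLVE}. So there is no in-paper proof to compare against. Your derivation is self-contained using only material already recalled here (Lemma~\ref{lemmanewproof}, the block form \eqref{ALVE}, and Definition~\ref{defSymProd}), and it makes transparent the split between the ``dynamical'' contribution ($j\ge 1$ from \eqref{LVEstar}) and the ``kinematic'' $j=0$ term coming from $A_0=X(\bm\phi)$; that decomposition is precisely the content of passing from $A_{\mathrm{LVE}_\phi}=A\odot\expodot\Id_n$ to $\widehat A_{\mathrm{LVE}_\phi}=\widehat A\odot\expodot\Id_n$ by adjoining the extra block $A_0$ in \eqref{jetA}.
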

Hence, blocks in $V_1,\p{V_2,V_1}^T, \p{V_3,V_2,V_1}^T, \dots$ having all entries in the base field $K$ and satisfying both equations in Proposition \ref{SeriesProp} and \ref{lemmanewproof} are jet blocks $F^{\p{1}},F^{\p{2}},\dots$ of a formal series that will be a first integral if convergent. In other words:
\begin{defn}[\cite{ABSW}]
In the above hypotheses,
\begin{ien}
\item The vectors $\p{V_k,\dots,V_1}^T$ belonging to the intersection of $\ker\widehat{A}_{\mathrm{LVE}^{k-1}_{\phi}}^T$ and the solution subspace 
$\mathrm{Sol}_K \p{\mathrm{LVE}^k_{\phi}}^\star$ are called \textbf{admissible} solutions of the order-$k$ adjoint system.
\item We call the sum $\sum\limits_{k\ge 1} \frac{1}{k!} V_k \bm{z}^{\odot k} $ a \textbf{formal first integral} along $\phi$ if every finite truncation thereof $\sum\limits_{k =1}^m \frac{1}{k!} V_k \bm{z}^{\odot k} $ arises from an admissible solution $\p{V_m,\dots,V_1}$.
\end{ien}
\end{defn}

For a short introduction on how changes of variables are reflected on this scheme, as well as the explicit form for the \emph{monodromy matrix} \cite{Zoladek} of ${\mathrm{LVE}_{\phi}^k}$ along any path $\gamma$, see of \cite[\S 4 \& 5]{SimonLVE}.

\section{Jet filtering methods and computation of formal first integrals}
\label{filtersec}

\subsection{Preliminaries}

Let $f$ be a first integral of \eqref{sys}. The following are immediate: 
\begin{lem}\label{filteringout1}
\begin{ien}
\item For every $i\ge 1$, 
\begin{equation}
\p{fg}^{\p{i}} = \sum_{j=0}^i \binom{i}{j} f^{\p{j}}\odot g^{\p{i-j}}, 
\end{equation}
with the understanding that symmetric product by a scalar is identical to the usual product, hence
\begin{eqnarray*}
J\brr{fg} &=& U\odot \p{U^{\odot 2}}^T \qu{J\brr{f} \odot J\brr{g}} = U\odot \p{U^{\odot 2}}^T \qu{\expodot J\brr{f} \odot \expodot J\brr{g}} \\ &=&  U\odot \qu{U^T\expodot J\brr{f}}\odot\qu{U^T\expodot J\brr{g}}
\end{eqnarray*}
where $U = \p{\begin{tabular}{c|c}
   $0$ & $1$ \\\hline
   $0$ & $0$ 
\end{tabular}}$
\item 
For every $k\ge 1$, 
\begin{equation}
\p{f^k}^{\p{i}} = k!\sum_{j=1}^{k}\frac{f^{j-1}}{\p{j-1}!}\sum_{i_1+\dots+i_{k-j+1}=i}c^i_{i_1,\dots,i_{k-j+1}} \bigodot_{u=1}^{k-j+1}f^{\p{i_u}},
\end{equation}
hence defining 
\[
E_1 = \p{\begin{tabular}{c|c}
   $\bm{e}_1^T$ & $0$ \\\hline
   $0$ & $0$ 
\end{tabular}}, \qquad U = \p{\begin{tabular}{c|c}
   $0$ & $1$ \\\hline
   $0$ & $0$ 
\end{tabular}},
\]
we have 
\begin{eqnarray}
J\brr{f^k} 
&=& k! \brr{U\odot \p{U^{\odot k}}^T \qu{\expodot J\brr{f}}}=k!\brr{U\odot \p{U^{\odot k}}^T \qu{\expodot J\brr{f}}} \\
&=&  U\odot \qu{U^T\expodot J\brr{f}}^{\odot k}=U\odot \qu{U^TJ\brr{f}}^{\odot k}
\end{eqnarray}
\item 
For every $k\ge 1$, 
\begin{equation}
f^{k}\p{1/f^k}^{\p{i}} = k\sum_{j=0}^{i}\frac{\p{k+j-1}!}{k!}f^{-j}\p{-1}^j\sum_{i_1+\dots+i_j=i}c^i_{i_1,\dots,i_j} \bigodot_{u=1}^jf^{\p{i_u}},
\end{equation}
hence
\begin{eqnarray}
J\brr{1/f}&=&\frac{1}{f}U\odot\brr{\p{\begin{tabular}{c|c}
$\vdots$ & $\vdots$ \\
$0$ & $3!\p{-1/f}^3$ \\
$0$ & $2!\p{-1/f}^2$ \\
$0$ & $1!\p{-1/f}$ \\\hline
$0$ & $1$ 
\end{tabular}}^T\expodot \p{J\brr{f}-fU}} \\
&=&  U\odot \p{\frac{1}{f}U^TJ\brr{f}}^{\odot-1}=U\odot \p{\sum_{i=0}^\infty \p{-1}^i\qu{\frac{1}{f}U^TJ\brr{f}-\mathbf{1}}^{\odot i}}
\end{eqnarray}
\end{ien}
\end{lem}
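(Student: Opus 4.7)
\medskip

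\noindent\emph{Proof proposal.} The three assertions are multivariate Leibniz, the multivariate power rule and the multivariate quotient rule, respectively; the task is to package each via Definition \ref{defSymProd} and the jet notation of \S\ref{powerseriessection}, so I would attack them one at a time, always first establishing the scalar multi-index identity and then assembling the $\odot$-matrix repackaging.

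For \textbf{(i)}, the plan is to write, for every multi-index $\mathbf{k}$ with $\av{\mathbf{k}}=i$, the classical Leibniz rule
$\partial^{\mathbf{k}}(fg) = \sum_{\mathbf{p}\le\mathbf{k}} \binom{\mathbf{k}}{\mathbf{p}}\partial^{\mathbf{p}}f\cdot\partial^{\mathbf{k}-\mathbf{p}}g$,
group the terms according to $j=\av{\mathbf{p}}$, and recognise each inner sum as the defining formula \eqref{SymProd} for $f^{(j)}\odot g^{(i-j)}$ (both factors are $(0,\cdot)$-matrices, so the rank-zero $\odot$-product coincides with lex-sifted symmetric pairing). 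The normalising binomial $\binom{i}{j}^{-1}$ in \eqref{SymProd} is cancelled by the outer $\binom{i}{j}$, yielding the first displayed identity. The jet restatement then follows mechanically: stack these identities for all $i\ge 1$ into the single row $J[fg]$, and use the definition $U = \bigl(\begin{smallmatrix}0 & 1 \\ 0 & 0\end{smallmatrix}\bigr)$ (which acts by a single row-block shift) together with Proposition \ref{odotproperties}(2)--(3) and the multiplicativity of $\expodot$ in Lemma \ref{expprops0}(1) to unfold $\expodot J[f]\odot\expodot J[g]$ back into $\expodot(J[f]+J[g])$ at the correct shifted level. The only real bookkeeping is that $U^T$ contracts along the differential grading so that $U^T\expodot J[f]$ acts as a generating object whose symmetric products retrieve precisely the convolutions appearing in Leibniz.

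For \textbf{(ii)}, I would proceed by induction on $k$, the base $k=1$ being trivial and the step being an application of (i) to $f^{k}=f\cdot f^{k-1}$, combined with Lemma \ref{original} to recombine the $r$-fold symmetric products and produce the combinatorial coefficient $c^{i}_{i_1,\dots,i_{k-j+1}}$ as in Notation \ref{notationc}. An alternative, and the way I would \emph{verify} the answer, is to apply Faà di Bruno to $h(f)=f^k$, noting that $h^{(j)}(f)=k!/(k-j)!\,f^{k-j}$, which instantly produces the displayed partition sum. The jet form $J[f^k]=U\odot(U^T J[f])^{\odot k}$ is then immediate from Lemma \ref{AZproperties}(2): the block $Z_{k,i}$ of $\expodot(U^TJ[f])$ is exactly the weighted partition sum just computed, so shifting back with $U$ delivers the stated compact formula.

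For \textbf{(iii)}, the cleanest route is to apply Faà di Bruno to $h(f)=1/f^k$, for which $h^{(j)}(f)=(-1)^j k(k+1)\cdots(k+j-1)\,f^{-k-j}$; multiplying through by $f^k$ yields the factor $(k+j-1)!/(k!)\,(-1)^j f^{-j}$ and the inner partition sum, matching the claim verbatim. For the jet statement, the plan is to recognise $U^{T}J[f]$ as a formal ``increment'' (its $\odot$-exponential is essentially $f^{-1}\expodot(J[f]-fU)$ after the $1/f$ rescaling), whence the geometric series identity
\[
\left(\frac{1}{f}U^T J[f]\right)^{\odot -1}=\sum_{i=0}^\infty (-1)^i\!\left[\frac{1}{f}U^T J[f]-\mathbf{1}\right]^{\odot i}
\]
is just $(1+x)^{-1}=\sum(-x)^i$ inside the symmetric algebra, legitimised by Lemma \ref{expprops0}(4) and Proposition \ref{odotproperties}(5). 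One then recombines with $U\odot(\cdot)$ to shift into the jet block structure and match the first displayed form featuring the column of factorials $j!(-1/f)^j$.

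The main obstacle, in all three items, is uniformly the second step: translating the perfectly standard scalar identities into equalities of elements of $\Mat^{1,n}(K)$, where one must be vigilant about (a) the normalising binomials built into \eqref{SymProd}, (b) the shifting role of $U$ versus $U^T$ in the grading, and (c) the fact that the infinite matrices $\expodot J[f]$ are used purely formally, so convergence questions must be dispatched by confining all manipulations to fixed finite differential orders before taking the limit.
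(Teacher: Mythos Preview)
Your proposal is correct. The paper offers no proof at all for this lemma --- it is introduced with the sentence ``The following are immediate:'' and left at that --- so there is no approach in the paper to compare yours against. Your outline (multivariate Leibniz for (i), Fa\`a di Bruno or induction for (ii), Fa\`a di Bruno applied to $h(f)=f^{-k}$ for (iii), followed in each case by repackaging via the block-shift operator $U$ and the formulae in Definition~\ref{defSymProd}, Lemma~\ref{original}, Lemma~\ref{AZproperties} and Lemma~\ref{expprops0}) is exactly the natural route and supplies the detail the paper omits.
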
 

\subsection{Progressive filtering and computation of formal first integrals} \label{progressive}

Let us describe a method to start with a jet of valuation one and then proceed to compute each jet term of that particular function immediately in terms of the previous ones. 

\begin{lem} \label{lemma1}
Define \begin{equation}\label{f0i1} F_1:=F_{i,1}:=\Id_n-\frac{1}{X_i^0}\p{X^0\odot \bm{e}_i^T}.
\end{equation}
The $n-1$ non-zero rows of $F_1Y_1^{-1}$ are linearly independent admissible solutions of degree one.
\end{lem}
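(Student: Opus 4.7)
The plan is to verify the two defining requirements of a degree-one admissible solution for each of the $n-1$ non-zero rows of $F_1 Y_1^{-1}$: membership in $\mathrm{Sol}_K(\mathrm{LVE}_\phi^1)^\star$ (dual equation) and the kernel condition $A_0^T V_1 = 0$ coming from Proposition \ref{SeriesProp} at $k=1$; linear independence will then follow from the rank structure.

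First I would unpack $F_1$. Applying Proposition \ref{odotproperties}(8) with $p=0$, $q=1$ and $v = X^0$ gives
\[
(X^0 \odot \bm{e}_i^T)\,X^0 \;=\; X^0 \odot (\bm{e}_i^T X^0) \;=\; X_i^0\,X^0,
\]
so $F_1 X^0 = X^0 - (X_i^0/X_i^0)X^0 = 0$. A parallel left-multiplication yields $\bm{e}_i^T F_1 = \bm{e}_i^T - (X_i^0/X_i^0)\bm{e}_i^T = 0$, so the $i$-th row of $F_1$ vanishes identically. The remaining $n-1$ rows take the explicit form $\bm{e}_j^T - (X_j^0/X_i^0)\bm{e}_i^T$ for $j\neq i$; they are $K$-linearly independent and together span the hyperplane $(X^0)^\perp \subset K^n$.

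For the dual-system requirement I would invoke Lemma \ref{adjoint}: $(Y_1^{-1})^T$ is the principal fundamental matrix of $(\mathrm{LVE}_\phi^1)^\star$, so transposed rows of $Y_1^{-1}$ and their combinations (parametrized by the initial-time rows of $F_1$ propagated through $(Y_1^{-1})^T$) exhaust the solution space of the dual. For the kernel condition I would exploit the key identity $Y_1^{-1}A_0 = X(\bm\phi(0))$, obtained from the observation that both $A_0 = X\circ \bm\phi$ and $Y_1\,X(\bm\phi(0))$ solve $\dot W = A_1 W$ with identical initial value $X(\bm\phi(0))$, and then invoke uniqueness. Transposing and combining with the first step, the admissibility check for the $j$-th row becomes
\[
A_0^T\bigl((Y_1^{-1})^T F_1^T \bm{e}_j\bigr) \;=\; \bm{e}_j^T F_1 \,(Y_1^{-1}A_0)\;=\;\bm{e}_j^T F_1\,X(\bm\phi(0)) \;=\; 0.
\]

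The principal technical obstacle is the careful handling of the $t$-dependence of $F_1$: a naive row-by-row differentiation of $F_1 Y_1^{-1}$ would not manifestly preserve the dual-equation property, so one must interpret each row as the $(Y_1^{-1})^T$-propagation of its $t=0$ initial value, the $(n-1)$-dimensional admissible subspace at the reference time being exactly the span of the non-zero rows of $F_1(0)$. Once this is in place, linear independence of the $n-1$ resulting admissible solutions is inherited from that of the non-zero rows of $F_1$ via the invertibility of $Y_1^{-1}$.
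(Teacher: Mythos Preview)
Your proof is correct and follows essentially the same route as the paper's: both establish $Y_1^{-1}A_0\equiv X^0$ (the paper by differentiating $Y_1^{-1}A_0$ directly, you equivalently by uniqueness for $\dot W=A_1W$), then verify $F_1X^0=0$, then read off linear independence from the rank of $F_1$ and the invertibility of $Y_1$. Your closing concern about the ``$t$-dependence of $F_1$'' is unfounded, however: the superscript in $X^0=X(\bm\phi(t_0))$ makes $F_1$ a \emph{constant} matrix, so membership of the rows of $F_1Y_1^{-1}$ in $\mathrm{Sol}_K(\mathrm{LVE}_\phi^1)^\star$ is immediate from Lemma~\ref{adjoint} without any reinterpretation.
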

\begin{proof}
Let $F_{i,1}:=\Id_n-\frac{1}{X_i\p{\phi\p{0}}}\p{X\p{\phi\p{0}}\odot \bm{e}_i^T}$. We have $\widehat{A}_{\mathrm{LVE}_\phi^0}=A_0^T=X\p{\phi}^T$. And $\p{\Upsilon_1^{-1}}^T=\p{Y_1^{-1}}^T$, the fundamental matrix for $\mathrm{VE}^\star_1$. Thus checking that \eqref{kernelcondition} holds equates to checking whether $A_0^T \p{Y_1^{-1}}F_{i,1}^T = \bm{0}^T_n$, i.e. that $F_{i,1}Y_1^{-1}A_0$ equals a column of zeros. This is easy to prove by noticing that the basic chain rule and \eqref{sys} entails $\dot A_0=A_1 A_0$ on one hand, and on the other Lemma \ref{adjoint} for $k=1$ implies $\frac{d}{dt}Y^{-1} = -Y^{-1}A_1$. Both of these facts put together imply 
\[
\frac{d}{dt} \p{Y_1^{-1}A_0} = \dot{\overbracket[0.5pt]{Y_1^{-1}}}X+Y^{-1}\dot{\overbracket[0.5pt]{X}}
=-Y^{-1}A_1X+Y^{-1}A_1X=0
\]
and thus $\frac{d}{dt}\p{F_{i,1}Y_1^{-1}A_0} =F_{i,1}\frac{d}{dt}\p{Y_1^{-1}A_0} =0$ as well, which means $F_{i,1}Y_1^{-1}A_0$ is a constant vector. Setting $t=t_0$ and using the fact that $Y_1$ is a principal fundamental matrix (i.e. $Y_1\p{t_0}=\Id$) means that for $t=t_0$, abridging notation $X^0 = \p{X_1^0,\dots,X_n^0}=X\p{\phi\p{t_0}}$, 
\begin{eqnarray*}
F_{i,1}Y_1\p{t_0}^{-1}A_0 &=& F_{i,1} A_0 = X\p{\phi\p{t_0}} - \frac{1}{X_i\p{\phi\p{t_0}}}\p{X\p{\phi\p{t_0}}\odot \bm{e}_i^T}X\p{\phi\p{t_0}} 
\\ &=& \p{\begin{array}{c} X_1^0 \\ X_2^0 \\ \vdots \\ X_i^0 \\ \vdots \\ X_{n-1}^0 \\ X_n^0\end{array}} - 
\frac{1}{X_i^0}\p{ 
\begin{array}{ccc} \cline{1-1} \cline{3-3}\multicolumn{1}{|c|}{\multirow{7}{*}{$0_{n\times\p{i-1}}$}} & X_1^0 & \multicolumn{1}{|c|}{\multirow{7}{*}{$0_{n\times\p{n-i}}$}} \\
\multicolumn{1}{|c|}{} &  X_2^0& \multicolumn{1}{|c|}{} \\
\multicolumn{1}{|c|}{} & \vdots  & \multicolumn{1}{|c|}{} \\
\multicolumn{1}{|c|}{} & X_i^0 & \multicolumn{1}{|c|}{} \\
\multicolumn{1}{|c|}{} & \vdots & \multicolumn{1}{|c|}{} \\ 
\multicolumn{1}{|c|}{} & X_{n-1}^0 & \multicolumn{1}{|c|}{} \\
\multicolumn{1}{|c|}{} & X_{n}^0 & \multicolumn{1}{|c|}{} \\
\cline{1-1} \cline{3-3}
\end{array}}\p{\begin{array}{c} X_1^0 \\ X_2^0 \\ \vdots \\ X_i^0 \\ \vdots \\ X_{n-1}^0 \\ X_n^0\end{array}} =\bm{0}_n.
\end{eqnarray*}
Still for $k=1$, we need to check that the $n-1$ non-zero columns of $\p{F_1Y^{-1}}^T$ are linearly independent. This is a consequence of the fact that  
\[
F_1^T = \p{\begin{array}{ccccccc}
\cline{1-3} 
\multicolumn{3}{|c|}{ \multirow{3}{*}{$\Id_{i-1}$}} & 0& & & \\
\multicolumn{3}{|c|}{ } &\vdots & & & \\
\multicolumn{3}{|c|}{ } &0 & & & \\ \cline{1-3}
-\frac{X_1}{X_i} & \cdots & -\frac{X_{i-1}}{X_{i}} & 0 & -\frac{X_{i+1}}{X_{i}}& \cdots & -\frac{X_n}{X_i} \\ 
\cline{5-7}
& & & 0 & \multicolumn{3}{|c|}{ \multirow{3}{*}{$\Id_{n-i}$}}  \\
 & & & \vdots & \multicolumn{3}{|c|}{ }  \\
& & & 0 & \multicolumn{3}{|c|}{ }  \\ \cline{5-7}
\end{array}}
\]
thus in $\p{Y^{-1}}^TF_1^T$ column $i$ equals $\bm{0}_n$ and its remaining $n-1$ columns are the output of subtracting multiples of column $i$ in $\p{Y^{-1}}^T$ from its remaining (already independent) $n-1$ columns. 
\end{proof}

Let $\phi\p{t}$ be a particular solution of \eqref{sys} whose parametric expression can be represented with one differential equation involving one single function $x\p{t}$, e.g. \eqref{parts1}, \eqref{parts}, \eqref{theeq} in \S \ref{examples}. 
\begin{thm} \label{othermethod}
In the above hypotheses, let $f_1$ be one of the columns of $\p{Y_1^{-1}}F_1^T$. Define $X_k:=\p{Y_1^{\odot k}}^T$. Then each term generated by the following recursion 
\begin{equation} \label{newrecursion}
f_k:=- X_k^{-1} \int X_k\brr{\sum_{j=2}^k\binom{k}{j}\p{A_j\odot \Id_n^{\odot k-j}}^Tf_{k-j+1}}dx,
\end{equation}
is the degree-$k$ term of the expansion of a formal first integral
\[ f\p{\bm{z}} = f_1 \bm{z} + \frac{1}{2} f_2 \bm{z}^{\odot 2} + \frac{1}{3!} f_3 \bm{z}^{\odot 3} + \dots,
\] provided that 
the limits or constants of integration at each stage \eqref{newrecursion} are such that
{\small\begin{equation} \label{newrecursion2}
f_k\p{A_0\odot \Id_n^{\odot k-1}}+ \binom{k-1}{1}f_{k-1} \p{A_1\odot \Id^{\odot k-2}_n} +\dots +\binom{k-1}{k-2}f_2 \p{A_{k-2}\odot \Id_n} + f_1 A_{k-1} =0.
\end{equation}} 
\end{thm}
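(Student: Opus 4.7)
The plan is to show that $\{f_k\}_{k\ge 1}$ built by \eqref{newrecursion}, with suitable integration constants, is admissible in the sense of Proposition \ref{SeriesProp}: at each order $k$, both (a) the tuple $(f_1,\ldots,f_k)$ solves the dual system $\p{\mathrm{LVE}_\phi^k}^\star$ and (b) the kernel condition \eqref{kernelcondition}, which is precisely \eqref{newrecursion2} after transposition, holds. Together with Lemma \ref{lemmanewproof} and Proposition \ref{SeriesProp}, this implies $\sum_{k\ge 1}(1/k!) f_k \bm{z}^{\odot k}$ is the jet of a formal first integral of \eqref{sys}. The base case $k=1$ is exactly Lemma \ref{lemma1}.

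For the inductive verification of (a), I would unpack the row of $A_{\mathrm{LVE}_\phi^k}^T$ corresponding to the degree-$k$ block of the dual variable. From the explicit structure \eqref{ALVE}, this row produces the linear non-homogeneous equation
\[
\dot f_k + k\p{A_1\odot \Id_n^{\odot k-1}}^T f_k \;=\; -\sum_{j=2}^{k}\binom{k}{j}\p{A_j\odot \Id_n^{\odot k-j}}^T f_{k-j+1}.
\]
By Lemma \ref{odeprods}(ii) applied to $Y_1^{\odot k}$, the transpose $X_k = \p{Y_1^{\odot k}}^T$ obeys $\dot X_k = k\, X_k\p{A_1\odot \Id_n^{\odot k-1}}^T$, so $X_k^{-1}$ is a principal fundamental matrix for the homogeneous part. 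Variation of parameters --- with $t$ replaced by $x=z_i(t)$ under the assumption $\dot z_i=A_{0,i}\neq 0$ so the substitution $dt = dx/A_{0,i}$ is legitimate --- yields exactly \eqref{newrecursion} up to an additive constant.

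For (b), define $G_k := \sum_{j=0}^{k-1}\binom{k-1}{j} f_{k-j}\p{A_j\odot \Id_n^{\odot k-1-j}}$, i.e.\ the left-hand side of \eqref{newrecursion2}. The crux is to show that under the inductive hypotheses $G_1=\cdots=G_{k-1}=0$ together with the dual ODEs at orders $\le k$, we have $\dot G_k\equiv 0$. Granted this, I would adjust the integration constant in \eqref{newrecursion} so that $G_k(t_0)=0$, forcing $G_k\equiv 0$ and closing the induction. Computing $\dot G_k$ combines the Leibniz rule (Lemma \ref{odeprods}(i)), the chain-rule identity $\dot A_j = A_{j+1}\p{A_0\odot \Id_n^{\odot j}}$ obtained from $\dot{\bm\phi}=A_0$ and the lex-sifted partial derivatives of $X^{(j)}$, the dual ODEs for $f_\ell$ with $\ell\le k$, and the bookkeeping identities of Proposition \ref{odotproperties} and Lemma \ref{anyprodslem}.

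\textbf{Main obstacle.} The combinatorial collapse establishing $\dot G_k\equiv 0$ is the real difficulty. Structurally it realizes at the block level the intertwining $\partial_t\widehat A_{\mathrm{LVE}^k} = A_{\mathrm{LVE}^k}\widehat A_{\mathrm{LVE}^k}$, whose content is that $\ker\widehat A_{\mathrm{LVE}}^T$ is invariant under the dual flow. I expect the manipulation to proceed by rewriting each $\dot A_j\odot \Id_n^{\odot k-1-j}$ via Lemma \ref{anyprodslem}(iii) and then reorganizing a double sum of $\binom{k-1}{j}$-weighted symmetric products through Pascal/Vandermonde identities, with the inductive admissibility relations $G_1=\cdots=G_{k-1}=0$ absorbing all residual terms.
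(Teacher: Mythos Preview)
Your part (a) is exactly the paper's proof: identify the degree-$k$ row of the dual system from \eqref{ALVE}, note that $X_k^{-1}=[(Y_1^{-1})^{\odot k}]^T$ is a fundamental matrix of the homogeneous part via Lemma \ref{odeprods}(ii), and recognize \eqref{newrecursion} as variation of constants. That, together with the observation that \eqref{newrecursion2} is literally the transpose of \eqref{kernelcondition}, is all the paper does.

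Where you diverge is in part (b). You treat the existence of integration constants making \eqref{newrecursion2} hold as something to be \emph{proven}, and you flag the combinatorial identity $\dot G_k\equiv 0$ as the ``Main obstacle.'' But the theorem is stated \emph{conditionally}: ``provided that \ldots\ the limits or constants of integration at each stage can be taken so as to ensure that \eqref{newrecursion2}.'' The paper accordingly does not attempt to establish that such constants exist; it simply says ``condition \eqref{newrecursion2} is nothing but a repetition of \eqref{kernelcondition}'' and invokes Proposition \ref{SeriesProp} and Lemma \ref{lemmanewproof}. So your obstacle is not an obstacle for the theorem as written --- you are attacking a strictly stronger, unconditional version.

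What each approach buys: the paper's proof is a two-line reduction to prior results, at the cost of leaving the proviso as a standing verification at every order (which is how the examples in \S\ref{examples} proceed in practice). Your programme, if completed, would upgrade the theorem to an unconditional recursive construction; the invariance statement you sketch --- that $\ker\widehat A_{\mathrm{LVE}}^T$ is preserved by the dual flow --- is indeed the right structural fact, and your outline of the computation (Leibniz on $A_j\odot\Id_n^{\odot k-1-j}$, the chain rule $\dot A_j=A_{j+1}(A_0\odot\Id_n^{\odot j})$, Pascal-type reshuffling, and absorption via $G_1=\cdots=G_{k-1}=0$) is plausible but not carried out. For the theorem as stated, none of that is needed.
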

\begin{proof}
Most of the work has been done already by Proposition \ref{SeriesProp}, Lemma \ref{lemma1} and the properties intrinsic to $\odot$. 
Firstly, $X^{-1}_k=\qu{\p{Y_1^{-1}}^{\odot k}}^T$. Secondly, condition \eqref{newrecursion2} is nothing but a repetition of \eqref{kernelcondition}. In order to check the consequences of defining \eqref{newrecursion}, all it takes is to realize that the first $d_{n,k}\times d_{n,k}$ equations of \eqref{LVEstar} read 
$$\dot{\overbracket[0.5pt]{\qu{\p{Y_1^{-1}}^{\odot k}}^T}} = -\p{A_1\odot \Id_n^{\odot k-1}}^T \qu{\p{Y_1^{-1}}^{\odot k}}^T,$$
which means $\p{Y_1^{\odot k}}^T$ is a principal fundamental matrix thereof;  the homogeneous part of the equations satisfied by $f_{k}$ displays the same matrix, thus \eqref{newrecursion} is just plain variation of constants applied to what is said in Lemma \ref{lemmanewproof}. 
\end{proof}

\subsection{Filtering with a single infinite matrix product} \label{filtersection}
The above result provides a recursive method to compute formal first integrals of arbitrary autonomous systems, thereby summing up the immediate aims of the current paper. The question arises, however, as to whether a \emph{single} infinite filter matrix $\Phi=\expodot F$ exists, amenable to recursive computation but expressible in compact form and \emph{computed without any further quadratures}, which multiplied times the fundamental matrix of the dual systems (whose terms are also computable recursively, as seen above) yields an already-trimmed matrix comprised of linearly independent admissible solutions in a single swipe. 

Building such an infinite quadrature-free filter matrix beyond $k=1$ has a value that is more theoretical than practical in nature at this stage, since the computational inviability of ever-growing matrices and the effort of discarding the cross-products $g^{\p{i_1}}_{j_1}\odots g^{\p{i_m}}_{j_m} $ get in the way of approximating jets properly. So the question arises: why consider one such matrix? The answer resides in the recent trends in categorification of dynamical systems (e.g. \cite{ChenGirelli}) and the potential to transport the techniques known in reductions of linear differential systems and flat meromorphic connections to arbitrary dynamical systems.

In this Section we prove such a matrix exists for meromorphic and holomorphic systems.

\subsubsection{Statement of the main result on infinite matrix filtering}

Let $\phi\p{t}$ be a non-trivial solution for \eqref{sys} and $t_0$ such that $X_i\p{\phi\p{t_0}}\ne 0$ for some $i$. Let $\Upsilon_k$ be the principal fundamental matrix for $\p{\mathrm{LVE}_\phi^k}$ such that $\Upsilon_k\p{t_0}=\Id$ and $F_1:=F_{i,1}$ as in \eqref{f0i1}.
\begin{thm}\label{filterthm}
In the above conditions, assume $X$ is holomorphic on a neighborhood $U$ of $\phi\p{t_0}$ and $Y_1$ and $Y_1^{-1}$ are holomorphic on a neighborhood $V$ of $t_0$. 
For every $k\ge 1$, define the constant matrices
\begin{eqnarray}\label{Fkstatement}
\label{Fkstatement1}\hspace{-1.5cm}F_{k}&=&-\frac{1}{X^0_i} \qu{\sum_{j=0}^{k-2}\binom{k-1}{j}F_{j+1}\p{A^0_{k-j-1}\odot \Id^{\odot j}}} U_k
, 
\end{eqnarray}
where we use superscript ${}^0$  to denote values at $t=t_0$, 
\begin{equation} \label{Fkstatement2}
U_k=\qu{\sum_{j=0}^{k-1} \binom{k}{j+1} \p{-1}^{j} \p{\Id - F_1}^{\odot j} \odot \Id^{\odot k-1-j}  }\odot \bm{e}_i^T = \qu{\bigodot_{j=1}^{k-1}\p{\Id-\zeta_k^jF_1}}\odot \bm{e}_i^T,
\end{equation}
and
$\zeta_k=\exp\p{\frac{2\pi\ri}{k}}$ is the primitive root of unity. 
Let $\Phi_k$ be the lower square right $D_{n,k}$-block of 
\[
\Phi=\expodot F, \qquad \mbox{where} \quad 
F=\p{\cdots \mid F_3\mid F_2\mid F_1}. \]
Then, the last $n-1$ non-zero columns of $\p{\Upsilon_k^{-1}}^T\p{\Phi_k}^T$ are linearly independent admissible solutions $g_1^{\p{\cdot}},g_2^{\p{\cdot}}, \dots,g_{n-1}^{\p{\cdot}} \in\mathrm{Sol}\p{\mathrm{LVE}_\phi^k}^\star\cap \ker \widehat{A}^T_{\mathrm{LVE}^{k-1}_{\phi}}$, i.e. truncated jets of functionally independent formal first integrals $g_1,\dots,g_{n-1}$ of valuation $1$, defined on a neighborhood of $\phi\p{t_0}$.
\end{thm}
\begin{rems} \label{importantremarks}
\begin{bfen}
\item[]
\item \label{rem1} We already know all the columns of $\p{\Upsilon_k^{-1}}^T\p{\Phi_k}^T$ belong to $\mathrm{Sol}\p{\mathrm{LVE}_\phi^k}^\star$, because $\frac{d}{dt} \p{\Upsilon^{-1}}^T = A_{\mathrm{LVE}^\star} \p{\Upsilon^{-1}}^T$ where $\Upsilon=\expodot Y$ , and the above finite-order products are nothing but truncations of the right-product of $\p{\Upsilon^{-1}}^T$ by a constant matrix. We also know that the non-zero columns in $\p{Y_1^{-1}}^{T}F^T_1$ are linearly independent, from whence follows the independence of the full non-zero columns in higher orders.

Thus all that is needed to establish Theorem \ref{filterthm} is to prove that these truncations are also annihilated by left-multiplication by $\widehat{A}^T_{\mathrm{LVE}^{k-1}_{\phi}}$. This will be done in \S \ref{prooffilterthm}.
\item In light of Remark \ref{rem1}, Lemma \ref{lemma1} basically entails that the entire Theorem holds true at level $k=1$ and, as seen in \S \ref{progressive}, that was all we needed to compute independent jets of formal first integrals, independently of Theorem \ref{filterthm}.
\end{bfen}
\end{rems}

\subsubsection{Proof of Theorem \ref{filterthm}}\label{prooffilterthm}

As per Remark \ref{rem1}, let us now prove that matrices defined by \eqref{Fkstatement1} and \eqref{Fkstatement2} are indeed lower blocks of a filter matrix $\Phi$ reducing $\p{\Upsilon^{-1}}^T$ to the desired form (namely one such its lower right $D_{n,k}\times D_{n,k}$ blocks belong to $\ker \widehat{A}^T_{\mathrm{LVE}^{k-1}_{\phi}}$). In other words, that
\begin{equation} \label{eqfinal}
F\cdot \Upsilon^{-1} \cdot \widehat{A}_{\mathrm{LVE}_{\phi}}=F\cdot \Upsilon^{-1} \cdot \p{\widehat{A}\odot \expodot \Id_n} = 0,
\end{equation}
Our first step is proving the following.
\begin{prop} \label{prefinal}
Identity \eqref{eqfinal} is true at $t_0$, i.e. $F \cdot \left.\widehat{A}_{\mathrm{LVE}_{\phi}}\right|_{t=t_0}=0$.
\end{prop}
\begin{proof}
Obviously we are using the fact that $\left.\Upsilon\right|_{t=t_0} = \expodot\Id$, i.e. an infinite identity matrix.
All occurrences of $A_i$ from here onward in this proof will refer to constant matrices $A_i^0=X^{\p{i}}\p{\phi\p{t_0}}$.
Case $k=1$ has already been tackled paragraphs earlier; in general, we need to prove that 
{\small \begin{equation} \label{hyp}
F_k\p{A_0\odot \Id^{\odot k-1}} + \binom{k-1}{1} F_{k-1}\p{A_1\odot \Id^{k-2}} + 
 \dots + \binom{k-1}{k-2} F_{2}\p{A_{k-2}\odot \Id} +
F_{1}\p{A_{k-1}} =0,
\end{equation}} for every $k\ge 1$. 

First of all, bilinearity from Prop. \ref{odotproperties} and a simple induction argument imply, for every set of $n\times n$ matrices $M_1,\dots,M_m$,
\[
\bigodot_{i=1}^m \p{\Id+M_i} = \Id^{\odot m}_n+\sum_{j=1}^m \Id_n^{\odot m-1} \odot M_j + \sum_{1\le j_1< j_2 \le m} \Id_n^{\odot m-2} \odot M_{j_1}\odot M_{j_2}+\dots + M_1\odots M_m
\]
and thus setting $m=k-1$ and $M_i=-\zeta^i_kF_1$ very basic properties of cyclotomic polynomials imply  
\begin{equation} \label{newUk}
U_k=\qu{\bigodot_{i=1}^{k-1 } \p{\Id-\zeta_k^iF_i}}\odot \bm{e}_i^T = \p{\sum_{j=0}^{k-1} \Id_n^{\odot j} \odot F_1^{\odot k-1-j}}\odot \bm{e}_i^T
\end{equation}
We claim that 
\begin{equation} \label{Ukk}
U_k\p{A_0\odot \Id_n^{\odot k-1}} = X_i \Id_n^{\odot k-1}, \qquad \mbox{ for every }k\ge 1.
\end{equation}
Indeed, we have, using \eqref{hello} and the rest of the properties of $\odot$ (all of which can be traced back to Lemma \ref{original} or the universal property of $\odot$, \cite[\S 2.1]{SimonLVE}),  
\begin{eqnarray*}
U_k\p{A_0\odot \Id_n^{\odot k-1}} &=& \qu{\p{\sum_{j=0}^{k-1} \Id_n^{\odot j} \odot F_1^{\odot k-1-j}}\odot \bm{e}_i^T} \p{A_0\odot \Id_n^{\odot k-1}} \\
&=& \frac{k-1}{k} \p{\sum_{j=0}^{k-1} \Id_n^{\odot j} \odot F_1^{\odot k-1-j}}\p{A_0\odot \Id_n^{k-2}} \odot \bm{e}_i^T + \frac{X_i}{k} \p{\sum_{j=0}^{k-1} \Id_n^{\odot j} \odot F_1^{\odot k-j}} \\
&=& \frac{k-1}{k}\sum_{j=0}^{k-2} \frac{k-1-j}{k-1} F_1^{\odot j} \odot A_0 \odot \Id_n^{\odot k-2-j}\odot \bm{e}_i^T+\frac{X_i}{k} \p{\sum_{j=0}^{k-1} \Id_n^{\odot j} \odot F_1^{\odot k-j}} \\
&=& \frac{1}{k}\sum_{j=0}^{k-2} \p{k-1-j} F_1^{\odot j} \odot A_0 \odot \Id_n^{\odot k-2-j}\odot \bm{e}_i^T+\frac{X_i}{k} \p{\sum_{j=0}^{k-1} \Id_n^{\odot j} \odot F_1^{\odot k-j}} \\
&=& X_i \Id_n^{\odot k-1}.
\end{eqnarray*}

The rest follows from \eqref{Ukk}:
\begin{eqnarray*}
F_{k}\p{A_0\odot \Id_n^{\odot k-1}} &=& -\frac{1}{X_i} \qu{\sum_{j=0}^{k-2}\binom{k-1}{j}F_{j+1}\p{A_{k-1-j}\odot \Id_n^{\odot j}}}U_k\p{A_0\odot \Id_n^{k-1}} \\
&=& -\frac{1}{X_i} \qu{\sum_{j=0}^{k-2}\binom{k-1}{j}F_{j+1}\p{A_{k-1-j}\odot \Id_n^{\odot j}}}X_i\Id_n^{\odot k-1}  \\
&=& -\sum_{j=0}^{k-2}\binom{k-1}{j}F_{j+1}\p{A_{k-1-j}\odot \Id_n^{\odot j}}  \\
\end{eqnarray*}
which proves \eqref{hyp} true.
\end{proof}

Double induction, along with properties of $\odot$, easily proves that
\[
A_0\odot A_u \odot \Id^{\odot v} = \p{A_0\odot \Id^{\odot v+1}}\p{A_u\odot \Id^{\odot v}}, \qquad u,v \ge 0,
\]
and just as easily establishes the following consequence thereof:
\begin{lem} \label{Alem} 
Using the redundant notation
\[
A_0 =  \p{\begin{tabular}{cc|c}
$\cdots$  &     $0$ &  $0$  \\
$\cdots$ &     $0$ & $A_0$ \\ \hline
$\cdots$ &   $0$ &  $0$ 
\end{tabular}}, \]
and in the hypotheses of \eqref{jetA} and \eqref{matA0}, 
\begin{eqnarray*}
\dot{\overbracket[0.5pt]{\widehat{A}\odot \expodot \Id}} &=& \p{A\odot \expodot \Id}\p{A_0\odot \expodot \Id} - A_0\odot A\odot \expodot \Id \\
&=& \p{A\odot \expodot \Id}\p{A_0\odot \expodot \Id}-\p{A_0\odot \expodot \Id}\p{A\odot \expodot \Id},
\end{eqnarray*}
in other words, 
{\small \[
\binom{i+j}{i}\dot{\overbracket[0.5pt]{A_j \odot \Id^i}} = \binom{i+j+1}{i} \p{A_{j+1}\odot \Id^{\odot i}}\!\p{A_0\odot \Id^{\odot\p{i+j}}} -\binom{i+j}{i-1}\p{A_{j+1}\odot A_0\odot \Id^{\odot\p{i-1}}} ,\]}
for every $i,j\ge 0$. $\square$
\end{lem}

Let us study the successive derivatives of $C=F\cdot \Upsilon^{-1} \cdot \widehat{A}_{\mathrm{LVE}_{\phi}}$.
We know $\dot{\overbracket[0.5pt]{\Upsilon^{-1}}}=-\Upsilon^{-1}A_{\mathrm{LVE}_{\phi}}$ from
Lemma \ref{adjoint}. Using Lemma \ref{Alem},
{\small\begin{eqnarray*}
\dot{\overbracket[0.5pt]{C}} &=& 
{\small F \qu{-\Upsilon^{-1}A_{\mathrm{LVE}_{\phi}}\widehat{A}_{\mathrm{LVE}_{\phi}} +\Upsilon^{-1}A_{\mathrm{LVE}_{\phi}}\p{A_0\odot \expodot \Id} - \Upsilon^{-1}A_0\odot A\odot \expodot \Id} } \\
&=& F \Upsilon^{-1}\qu{-A_{\mathrm{LVE}_{\phi}}\widehat{A}_{\mathrm{LVE}_{\phi}} +A_{\mathrm{LVE}_{\phi}}\p{A_0\odot \expodot \Id} - A_0\odot A\odot \expodot \Id} \\
&=& - F \Upsilon^{-1}\qu{A_{\mathrm{LVE}_{\phi}}^2+A_0\odot A\odot \expodot \Id} = - \p{\expodot F} \Upsilon^{-1}\qu{A_{\mathrm{LVE}_{\phi}}^2+A_0\odot A_{\mathrm{LVE}}} \\
&=&  - F \Upsilon^{-1}\qu{A_{\mathrm{LVE}_{\phi}}^2+\p{A_0\odot \expodot \Id}A_{\mathrm{LVE}_\phi}} \\
&=& - F \Upsilon^{-1}\qu{\widehat{A}_{\mathrm{LVE}_{\phi}}A_{\mathrm{LVE}_\phi}} , \\
\ddot{\overbracket[0.5pt]{C}} &=& - F \Upsilon^{-1}\qu{\dot{\overbracket[0.5pt]{\widehat{A}_{\mathrm{LVE}_{\phi}}}}A_{\mathrm{LVE}_\phi}+\widehat{A}_{\mathrm{LVE}_{\phi}}\dot{\overbracket[0.5pt]{A_{\mathrm{LVE}_\phi}}}} \\
&=& - F \Upsilon^{-1}\qu{-A_{\mathrm{LVE}_\phi}\widehat{A}_{\mathrm{LVE}_{\phi}}A_{\mathrm{LVE}_\phi}+
\dot{\overbracket[0.5pt]{\widehat{A}_{\mathrm{LVE}_{\phi}}}}A_{\mathrm{LVE}_\phi}+\widehat{A}_{\mathrm{LVE}_{\phi}}\dot{\overbracket[0.5pt]{A_{\mathrm{LVE}_\phi}}}} \\
&=& - F \Upsilon^{-1}\qu{-A_{\mathrm{LVE}_\phi}\widehat{A}_{\mathrm{LVE}_{\phi}}A_{\mathrm{LVE}_\phi}+
\p{A_{\mathrm{LVE}_\phi}\p{A_0\odot\expodot\Id}-\p{A_0\odot\expodot\Id}A_{\mathrm{LVE}_\phi}}A_{\mathrm{LVE}_\phi}+\widehat{A}_{\mathrm{LVE}_{\phi}}\dot{\overbracket[0.5pt]{A_{\mathrm{LVE}_\phi}}}} \\
&=& - F \Upsilon^{-1}\qu{-A^2_{\mathrm{LVE}_\phi}A_{\mathrm{LVE}_\phi}-\p{A_0\odot\expodot\Id}A^2_{\mathrm{LVE}_\phi}+\widehat{A}_{\mathrm{LVE}_{\phi}}\dot{\overbracket[0.5pt]{A_{\mathrm{LVE}_\phi}}}} \\
&=& F \Upsilon^{-1}\qu{\widehat{A}_{\mathrm{LVE}_{\phi}}A^2_{\mathrm{LVE}_\phi}-\widehat{A}_{\mathrm{LVE}_{\phi}}\dot{\overbracket[0.5pt]{A_{\mathrm{LVE}_\phi}}}}
\end{eqnarray*}}This leads to the following statement, which can be easily proved by induction: for every $i\ge 1$, there exists  such that 
\begin{equation} \label{final}
\frac{d^i C}{dt^i}  = \frac{d^i }{dt^i} \p{F\cdot \Upsilon^{-1} \cdot \widehat{A}_{\mathrm{LVE}_{\phi}}} = F\cdot \Upsilon^{-1} \cdot \widehat{A}_{\mathrm{LVE}_{\phi}} \cdot M_i, \qquad \mbox{for some $M_i\in \Mat\p{K}$.}
\end{equation}
Case $i=0$ is true with $M_0=\expodot \Id$ by definition, cases $i=1$ and $i=2$ are true with $M_1=-A_{\mathrm{LVE}_\phi}$ and $M_2=A^2_{\mathrm{LVE}_\phi}-\dot{\overbracket[0.5pt]{A_{\mathrm{LVE}_\phi}}}$ as seen above and if we assume \eqref{final} true for an arbitary $i$, then the proof for $i+1$ works along the lines of case $i=2$ by way of Lemma \ref{Alem}:
{\small\begin{eqnarray*}
\frac{d^{i+1} C}{dt^{i+1}}  &=& \frac{d}{dt} \p{F \Upsilon^{-1}  \widehat{A}_{\mathrm{LVE}_{\phi}}  M_i} = F \Upsilon^{-1}\qu{-A_{\mathrm{LVE}_\phi}\widehat{A}_{\mathrm{LVE}_{\phi}}  M_i+\dot{\overbracket[0.5pt]{\widehat{A}_{\mathrm{LVE}_{\phi}}}} M_i+ \widehat{A}_{\mathrm{LVE}_{\phi}}\dot{\overbracket[0.5pt]{M_i}}} \\
&=& F \Upsilon^{-1} \qu{-A_{\mathrm{LVE}_\phi}\widehat{A}_{\mathrm{LVE}_{\phi}}  M_i+\p{A_{\mathrm{LVE}_\phi}\p{A_0\odot\expodot\Id}-\p{A_0\odot\expodot\Id}A_{\mathrm{LVE}_\phi}} M_i+ \widehat{A}_{\mathrm{LVE}_{\phi}}\dot{\overbracket[0.5pt]{M_i}}} \\
&=& F \Upsilon^{-1} \qu{-\widehat{A}_{\mathrm{LVE}_{\phi}}A_{\mathrm{LVE}_\phi}  M_i+ \widehat{A}_{\mathrm{LVE}_{\phi}}\dot{\overbracket[0.5pt]{M_i}}} = F \Upsilon^{-1} \widehat{A}_{\mathrm{LVE}_{\phi}}\qu{-A_{\mathrm{LVE}_\phi}  M_i+ \dot{\overbracket[0.5pt]{M_i}}},
\end{eqnarray*}}which explicitly defines the sought-after matrices recursively as 
\[ M_{i+1}=-A_{\mathrm{LVE}_\phi}  M_i+ \dot{\overbracket[0.5pt]{M_i}}, \qquad i\ge 0  . \]
Thus \eqref{final} is true for every $i\ge 1$ and all we need to do to finish the proof of Theorem \ref{filterthm} is ascertain that all derivatives of $C$ at $t=t_0$ are identically equal to the zero infinite matrix. But this is true on account of the fact that if we take \eqref{final} at $t=t_0$, we obtain
\[
\left.\frac{d^{i+1} C}{dt^{i+1}}\right|_{t=t_0} = F \cdot \left.\widehat{A}_{\mathrm{LVE}_{\phi}}\right|_{t=t_0} \cdot M_i \p{t_0} = 0 \cdot M_i\p{t_0} = 0 ,
\]
in virtue of Proposition \ref{prefinal}. Thus the matrix $C=F \Upsilon^{-1}  \widehat{A}_{\mathrm{LVE}_{\phi}}$ of functions holomorphic on the open neighborhood $I:=\phi^{-1}\p{U}\cap V$ of $t_0$ has all of its derivatives with respect to $t$ equal to zero in $t_0$. This means $C\equiv 0$ in a neighborhood $J$ of $t_0$, thus on that neighborhood $\p{\Upsilon^{-1}}^TF^T \in \ker \widehat{A}_{\mathrm{LVE}_{\phi}}^T$ which is the last step we needed to prove the columns of $\p{\Upsilon^{-1}}^TF^T$ are admissible solutions defined on a neighborhood of $\brr{\phi\p{t}:t\in I\cap J}$. Remarks \ref{importantremarks} and Lemma \ref{lemma1} finish the proof. $\square$

\subsubsection{Further comments}

The following corresponds to single-row blocks (i.e. one first integral at a time) and is a valid precursor to future studies on how to maintain the filtered admissible structure even after performing changes of variables, but this is not immediately necessary to our current aims:
\begin{cor} \label{changeofvariables}
Let $g$ be a first integral of \eqref{sys} and $g^{\p{k}}_i$ its $k^{\mathrm{th}}$ lexicographic derivative, for every $k\ge 1$. Let $g_k:=g^{\p{k}}\p{\phi\p{t_0}}$ be its value at the original point and define the matrix (constant, just like $F_k$):
\[ M_k = 
 \p{
\begin{array}{ccccc}
 g_1^{\odot k}  & & & \\
 c_{1,\dots,1,2} g_1^{\odot k-1}\odot g_2 & \ddots & & \\
\vdots & \ddots & g_1^{\odot 2} \\
g_k & \cdots & g_2 & g_1 
  \end{array}
 }
 \in \mathrm{Mat}_{1,n}^{n,n}
\]
be the first $k\times D_{n,k}$ block of $M:=\expodot \p{\cdots g_k \mid g_{k-1}\mid \dots\mid g_2\mid g_1}$.
Then 
\[
\p{\Upsilon_k^{-1}}^T\Phi_k^TM_k^T = \p{\begin{array}{cccc} 
\frac{1}{k!} \p{g^k}^{\p{k}} & \cdots & \frac12 \p{g^2}^{\p{k}}& g^{\p{k}}\\
 & \ddots & \vdots & \vdots \\
 & & \frac{1}{2}\p{g^2}^{\p{2}} & g^{\p{2}} \\
 & & & g^{\p{1}}  \end{array}}
\]
In other words: $\p{\Upsilon^{-1}}^T\Phi^TM^T$ is an infinite matrix whose columns are the Taylor terms of the respective powers of $g$. $\square$
\end{cor}

\begin{example}
We have seen two methods of obtaining every new Taylor block of a given formal first integral: the global filter provided by Theorem \ref{filterthm}, and the progressive quadrature method in Theorem \ref{othermethod}. Let us remain focused on the former and assume, for instance, $i=1$ in Lemma \ref{lemma1}; $F_1$ in \eqref{f0i1} transforms the transpose of the fundamental matrix of the dual system $\Upsilon_1^{-1}=Y_1^{-1}$  into a matrix of the form
\newcommand\ChangeRT[1]{\noalign{\hrule height #1}}
\newcolumntype{?}{!{\vrule width 0.8pt}}
\begin{equation}\label{filtered1}
\p{
\;\begin{array}{|c|}\hline
\bm{0} \\ \hline
\rule{1cm}{0cm} G_1 \rule{1cm}{0cm} \\ \hline
\end{array}\;} := \p{
\;\begin{array}{?c?}\ChangeRT{0.8pt}
\bm{0} \\ \ChangeRT{0.8pt}
\rule{1cm}{0cm} g_1^{\p{1}} \rule{1cm}{0cm} \\ \ChangeRT{0.1pt}
 g_2^{\p{1}} \\ \ChangeRT{0.1pt}
\vdots \\ \ChangeRT{0.1pt}
g_{n-1}^{\p{1}} \\\ChangeRT{0.8pt}
\end{array}\;}
=
\p{\begin{array}{ccccc}
0 & 0 & \cdots & 0 & 0 \\
g_{1,1}^{\p{1}} & g_{1,2}^{\p{1}} & \cdots & g_{1,n-1}^{\p{1}}  & g_{1,n}^{\p{1}}  \\
g_{2,1}^{\p{1}} & g_{2,2}^{\p{1}} & \cdots & g_{2,n-1}^{\p{1}}  & g_{2,n}^{\p{1}}  \\
\vdots & \vdots &  & \vdots  & \vdots  \\
g_{n-1,1}^{\p{1}} & g_{n-1,2}^{\p{1}} & \cdots & g_{n-1,n-1}^{\p{1}}  & g_{n-1,n}^{\p{1}}  \\
\end{array}};
\end{equation}
each of these row vectors $g_1^{\p{1}} ,\dots,g_{n-1}^{\p{1}}$ is the gradient of a formal first integral and all $n-1$ of them are linearly independent. The next step would be to integrate the second variational system (from here onwards only quadratures are needed):
\[
Y_2\p{t} = Y_1\int_0^t Y_1^{-1} \p{\tau} A_2\p{\tau} Y_1^{\odot 2} \p{\tau} d \tau
\]
invert the fundamental matrix for $\p{\mathrm{LVE}^2}$, 
\[
\Upsilon_2 = \p{\begin{array}{cc} Y_1^{\odot 2} & 0 \\ Y_2 & Y_1 \end{array}} , \qquad \mbox{thus} \qquad \Psi_2:=\p{\Upsilon_2^{-1}}^T
\]
and multiply it by the second-order filter matrix
\[
\Phi_2 = \p{\begin{array}{cc} F_1^{\odot 2} & 0 \\ F_2 & F_1 \end{array}},
\]
where $F_2$ is obtained from \eqref{Fkstatement1}, \eqref{Fkstatement2},
and the result we obtain is 
\begin{equation} \label{phi2}
\Phi_2 \p{\Psi_2}^T =\Phi_2\Upsilon_2^{-1}= \p{\begin{array}{cc} G_1^{\odot 2} & 0 \\ G_2 & G_1 \end{array}} = 
\p{\begin{array}{cc}
{\tiny 
\begin{array}{|c|}\ChangeRT{0.5pt}
\bm{0}_n \\ \ChangeRT{0.1pt}
  \rowcolor{Lightgray!50!white}\rule{0.45cm}{0cm} g_1^{\p{1}} \rule{0.45cm}{0cm} \\ \ChangeRT{0.1pt}
  \rowcolor{white}\vdots \\ \ChangeRT{0.1pt}
g_{n-1}^{\p{1}} \\ \ChangeRT{0.5pt}
\end{array}\;}^{\odot 2} & \\ \ChangeRT{1.1pt}
\multicolumn{1}{|c|}{\bm{0}_{d_{n,2}}} & \multicolumn{1}{c|}{\bm{0}_n} \\ \ChangeRT{0.1pt}
  \rowcolor{Lightgray}\multicolumn{1}{|c|}{\rule{2cm}{0cm}g_1^{\p{2}}\rule{2cm}{0cm}} & \multicolumn{1}{c|}{\rule{0.8cm}{0cm} g_1^{\p{1}} \rule{0.8cm}{0cm}} \\ \ChangeRT{0.1pt}
\multicolumn{1}{|c|}{\vdots } & \multicolumn{1}{c|}{\vdots} \\ \ChangeRT{0.1pt}
\multicolumn{1}{|c|}{g_{n-1}^{\p{2}}} & \multicolumn{1}{c|}{g_{n-1}^{\p{1}}} \\ \ChangeRT{1.1pt}
\end{array}}
\end{equation}
Take for instance the first non-zero row of the lower block $\p{G_2\mid G_1}$, i.e. the one highlighted in gray. Imagine we perform the computations for $\p{\mathrm{LVE}_\phi^3}$ and multiply $\Upsilon_3^{-1}$ by the filter matrix $\Phi_3$; we obtain a matrix having the one in \eqref{phi2} as a lower right $D_{n,k}\times D_{n,k}$ block:
{\small\begin{eqnarray} \label{g1}
\Phi_3 \Upsilon_3^{-1} \!&\!=\!& \!\p{\begin{array}{ccc} F_1^{\odot 3} & 0 & 0 \\ 3 F_1 \odot F_2  & F_1^{\odot} & 0 \\ F_3 & F_2 & F_1 \end{array}}\Upsilon_3^{-1}=\p{\begin{array}{ccc} G_1^{\odot 3} & 0 & 0 \\ 3 G_1 \odot G_2  & G_1^{\odot} & 0 \\ G_3 & G_2 & G_1 \end{array}} \\
&=& \nonumber
\p{\begin{array}{ccc}
{\tiny 
\begin{array}{|c|}\ChangeRT{0.5pt}
\bm{0}_n \\ \ChangeRT{0.1pt}
  \rowcolor{Lightgray!50!white}\rule{0.45cm}{0cm} g_1^{\p{1}} \rule{0.45cm}{0cm} \\ \ChangeRT{0.1pt}
  \rowcolor{white}\vdots \\ \ChangeRT{0.1pt}
g_{n-1}^{\p{1}} \\ \ChangeRT{0.5pt}
\end{array}\;}^{\odot 3} & & \\
3\; {\tiny 
\begin{array}{|c|}\ChangeRT{0.5pt}
\bm{0}_n \\ \ChangeRT{0.1pt}
  \rowcolor{Lightgray!50!white}\rule{0.45cm}{0cm} g_1^{\p{1}} \rule{0.45cm}{0cm} \\ \ChangeRT{0.1pt}
  \rowcolor{white}\vdots \\ \ChangeRT{0.1pt}
g_{n-1}^{\p{1}} \\ \ChangeRT{0.5pt}
\end{array}\;} \odot {\tiny \;
\begin{array}{|c|}\ChangeRT{0.5pt}
\bm{0}_{d_{n,2}}  \\ \ChangeRT{0.1pt}
  \rowcolor{Lightgray!50!white}\rule{1cm}{0cm}g_1^{\p{2}}\rule{1cm}{0cm}  \\ \ChangeRT{0.1pt}
\rowcolor{white}\vdots   \\ \ChangeRT{0.1pt}
g_{n-1}^{\p{2}}  \\ \ChangeRT{0.5pt}
\end{array}} & {\tiny 
\begin{array}{|c|}\ChangeRT{0.5pt}
\bm{0}_n \\ \ChangeRT{0.1pt}
  \rowcolor{Lightgray!50!white}\rule{0.45cm}{0cm} g_1^{\p{1}} \rule{0.45cm}{0cm} \\ \ChangeRT{0.1pt}
  \rowcolor{white}\vdots \\ \ChangeRT{0.1pt}
g_{n-1}^{\p{1}} \\ \ChangeRT{0.5pt}
\end{array}\;}^{\odot 2}  & \\ \ChangeRT{1.1pt}
\multicolumn{1}{|c|}{\bm{0}_{d_{n,3}}}& \multicolumn{1}{c|}{\bm{0}_{d_{n,2}}} & \multicolumn{1}{c|}{\bm{0}_n} \\ \ChangeRT{0.1pt}
\rowcolor{Lightgray} \multicolumn{1}{|c}{\rule{2cm}{0cm}g_1^{\p{3}}\rule{2cm}{0cm}} &  \multicolumn{1}{|c|}{\rule{2cm}{0cm}g_1^{\p{2}}\rule{2cm}{0cm}} & \multicolumn{1}{c|}{\rule{0.8cm}{0cm} g_1^{\p{1}} \rule{0.8cm}{0cm}} \\ \ChangeRT{0.1pt}
\multicolumn{1}{|c}{\vdots } & \multicolumn{1}{|c|}{\vdots } & \multicolumn{1}{c|}{\vdots} \\ \ChangeRT{0.1pt}
\multicolumn{1}{|c}{g_{n-1}^{\p{n-1}}} & \multicolumn{1}{|c|}{g_{n-1}^{\p{2}}} & \multicolumn{1}{c|}{g_{n-1}^{\p{1}}} \\ \ChangeRT{1.1pt}
\end{array}}\label{phi3}
\end{eqnarray}}thus we have an increasing row, highlighted in darker gray color $\dots \mid g_1^{\p{3}} \mid g_1^{\p{2}} \mid g_1^{\p{1}}$. These vectors are precisely the terms of a formal first integral $g_1\p{\bm z}$ of \eqref{sys} along solution $\phi$:
\begin{equation} \label{g1eq}
g_1\p{\bm z+\phi } = g_1\p{\phi} + g_1^{\p{1}} \bm z+\frac{1}{2}g_1^{\p{2}}\p{\phi} \bm z^{\odot 2}+\frac{1}{6}g_1^{\p{3}}\p{\phi} \bm z^{\odot 3}+\dots
\end{equation}
where $ g_1^{\p{i}}= g_1^{\p{i}}\p{\bm \phi}$, i.e. the implementation of \eqref{lexdef} in \ref{notalex} (\ref{notalex3}) to $\bm z = \phi$. The same applies, \emph{mutatis mutandis}, to the rest of rows of infinite block $\dots G_3\mid G_2 \mid G_1$, i.e. replacing $g^{\p{i}}_1$ with $g^{\p{i}}_j$ for some other $j=2,\dots,n-1$, which will lead to another first integral $g_j$ in lieu of $g_1$ in \eqref{g1eq}. Needless to say the upper terms $3G_1\odot G_2$, etcetera in \eqref{g1} and higher-order counterparts contain all the cross-products appearing in derivatives of products of the first integrals, e.g. $g_1^{\p{1}}\odot g_3^{\p{2}}$ would be a row in $G_1\odot G_2$ (it is easy to check which one by way of lexicographic ordering and we invite the reader to do this themselves, using the examples in \S \ref{examples}).
\end{example}

\section{Examples} \label{examples}
\subsection{Dixon's system} \label{dixonsection}
The following system,
\begin{equation} \label{CKDeq}
\dot u = \frac{uv}{u^2+v^2} - \alpha u , \qquad \dot v = \frac{v^2}{u^2+v^2} - \beta v +\beta -1, 
\end{equation}
where $\alpha,\beta>0$, arose in \cite{CDK} as a decoupled two-dimensional fragment of a three-dimensional dynamical model of the magnetic field of neutron stars. Several references (e.g. \cite{ADE,DCK,Sprott}) discussed the ostensible chaotic behavior of the dynamics of \eqref{CKDeq}. \cite{SeilerSeiss}, on the other hand, purported the non-chaotic character of this system based on a version of the Poincar\'e-Bendixson Theorem that precludes the need for compact sets comprising finitely many sets of homoclinic/heteroclinic connections. 
Given the existing, seldom-discussed gap between the concepts \emph{non-chaotic} and \emph{integrable} (regardless of which ad-hoc definition of chaos is used), we can afford to eschew the polemic altogether and focus on finding a conserved quantity for \eqref{CKDeq} using higher variational equations.

We will focus on the case $\alpha=\beta>0$. The case $\alpha\ne \beta$, as well as the original three-dimensional model (\cite[eqs (3)-(5)]{DCK}), will be tackled in future work.

$\alpha=1$ is immediate to solve and needs no further consideration. Let us focus on $\alpha \ne 1$.
Previous attempts at simplifying higher-order $\mathrm{LVE}^k$ suggest change of variable 
\begin{equation} \label{changeCDK}
\p{u,v} = \p{\frac{\sin y}{x},\frac{\cos y}{x}}
\end{equation}
which transforms \eqref{CKDeq} for $\alpha=\beta$ into 
\begin{equation} \label{CKDeqtransf}
\dot x = \alpha x \p{1-x \cos y},\qquad \dot y = -\p{\alpha-1} x \sin y .
\end{equation}
We will use particular solution 
\begin{equation} \label{parts1}
\phi\p{t} = \p{x\p{t},0} \qquad \mbox{ such that }\dot x = -\alpha \p{-1+x}x.
\end{equation}
Along $\Gamma$,  the higher variational complex \label{ALVE} and its dual \eqref{LVEstar} feature the following blocks for $k\ge 0$, 
\begin{equation} \label{generalA}
A_k= \left\{ \begin{array}{ll} 
X\p{x,y}=\p{ \begin{array}{c}\alpha x \p{1-x \cos y}\\-\p{\alpha-1} x \sin y\end{array}} , & k=0, \\
\left(
\begin{array}{cc}
 -\alpha \p{2 x-1} & 0 \\
 0 & -\p{\alpha-1} x \\
\end{array}
\right) , & k=1, \\
\p{\begin{array}{cccccc}
\cline{1-3}
\multicolumn{3}{|c|}{\multirow{2}{*}{$0_{2\times \p{k-2}}$}} &  \p{-1}^{k/2} 2 \alpha & 0 & \p{-1}^{k/2+1}\alpha x^2 \\
\multicolumn{3}{|c|}{} & 0 & \p{-1}^{k/2}\p{-1+\alpha} & 0 \\
\cline{1-3}
\end{array}}, & k\mbox{ even} \\
& \\ 
\p{\begin{array}{ccccc}
\cline{1-3}
\multicolumn{3}{|c|}{\multirow{2}{*}{$0_{2\times \p{k-1}}$}} &  \p{-1}^{\frac{k+1}{2}} 2 \alpha x & 0  \\
\multicolumn{3}{|c|}{} & 0 & \p{-1}^{\frac{k+1}{2}}\p{-1+\alpha} x \\
\cline{1-3}
\end{array}}, & k \mbox{ odd},
\end{array}
\right.
\end{equation}
for instance a principal fundamental matrix of \eqref{VE} is
\[
\Upsilon_1=Y_1=\left(
\begin{array}{cc}
 \frac{(x-1) x}{(x_0-1) x_0} & 0 \\
 0 & (1-x_0)^{\frac{1}{\alpha}-1} (1-x)^{1-\frac{1}{\alpha}} \\
\end{array}
\right)
\]
and the first-order filter matrix is very simple in this case,
\[
F_1=\Id_2-\frac{1}{X_1^0}\p{X^0\odot \bm{e}_1^T} = \left(
\begin{array}{cc}
 0 & 0 \\
 -\frac{X_2\p{x_0,y_0}}{X_1\p{x_0,y_0}} & 1 \\
\end{array}
\right)=\left(
\begin{array}{cc}
 0 & 0 \\
0 & 1 \\
\end{array}
\right),
\]
where $x_0=x\p{0}$ and $y_0=y\p{0}$; so are the higher-order matrices in $\Phi = \expodot F$ obtained from \eqref{Fkstatement1}, some of which are shown below:
\begin{eqnarray*}
&F_2=\left(
\begin{array}{ccc}
 0 & 0 & 0 \\
 0 & \frac{1-\alpha}{\alpha (x_0-1)} & 0 \\
\end{array}
\right), \quad F_3 = \left(
\begin{array}{cccc}
 0 & 0 & 0 & 0 \\
 0 & \frac{(\alpha-1) (2 \alpha-1)}{\alpha^2 (x_0-1)^2} & 0 & 0 \\
\end{array}
\right),& \\
& F_4 = \left(
\begin{array}{ccccc}
 0 & 0 & 0 & 0 & 0 \\
 0 & -\frac{(\alpha-1) (2 \alpha-1) (3 \alpha-1)}{\alpha^3 (x_0-1)^3} & 0 & -\frac{(\alpha-1) (2 x_0+1)}{\alpha (x_0-1)^2} & 0 \\
\end{array}
\right),\dots&
\end{eqnarray*}
Returning to $k=1$, Lemma \ref{lemma1} ensures that the second column of the filtered matrix
\[
\p{Y_1^{-1}}^T F_1^T = \left(
\begin{array}{cc}
 0 & 0 \\
 0 & (1-x_0)^{1-\frac{1}{\alpha}} (1-x)^{\frac{1}{\alpha}-1} \\
\end{array}
\right),
\]
is an admissible solution of degree $1$. We can normalize this to eliminate appearances of $x_0$:
\[
\p{\Phi_1^{-1}}^T F_1^T P^T := \left(
\begin{array}{cc}
 0 & 0 \\
 0 & (1-x_0)^{1-\frac{1}{\alpha}} (1-x)^{\frac{1}{\alpha}-1} \\
\end{array}
\right) \left(
\begin{array}{cc}
 1 & 0 \\
 0 & (1-x_0)^{\frac{1}{\alpha}-1} \\
\end{array}
\right)^T = \left(
\begin{array}{c|c|}
\cline{2-2}
 0 & 0 \\
 0 & (1-x)^{\frac{1}{\alpha}-1} \\
 \cline{2-2}
\end{array}
\right),
\]
the second of whose columns is still the gradient $f^{\p{1}}$ of a formal first integral (this is exactly what appears as a row $g_1^{\p{1}}$ in \eqref{filtered1}). 

The higher order trimming (in which we discard redundant symmetric products $f^{\p{i_1}}\odots f^{\p{i_k}}$, i.e. derivatives of powers of $f$) is clear once we write down the dual system of equations and filter out telescopically using either Lemma \ref{filteringout1} and the general form of that each new term $f^{\p{k}}$ of our formal first integral $f$ is expressible as the following (bear in mind that $x=x\p{t}$ here): $f^{\p{k}} \p{x} = \p{ \frac{d}{d x} f^{\p{k-1}}  \mid g_k\p{x}}$, where
\[
\sum _{j=0}^{\frac{k-1}{2}}\p{\p{-1}^{j+1}\p{\alpha-1} \binom{k}{ -1 - 2 j + k} g_{k-2j}\p{x}+\p{-1}^{j+1} \alpha \binom{k}{ -2 j + k} x g_{k-2j}'\p{x}}+\alpha g_k'\p{x} =0,
\]
which means $g_k\p{x}=0$ for every even $k\ge 2$ and e.g. 
\begin{eqnarray*}
f_1&=&\left(
\begin{array}{cc}
 0 & (1-x)^{\frac{1}{\alpha}-1} \\
\end{array}
\right), \\ 
f_2&=&\left(
\begin{array}{ccc}
 0 & \frac{(\alpha-1) (1-x)^{\frac{1}{\alpha}-2}}{\alpha} & 0 \\
\end{array}
\right), \\ 
f_3 &=& \left(
\begin{array}{cccc}
 0 & \left(\frac{1}{\alpha}-2\right) \left(\frac{1}{\alpha}-1\right) (1-x)^{\frac{1}{\alpha}-3} & 0 & \frac{(1-x)^{\frac{1}{\alpha}-2} (2 \alpha+(\alpha-2) x-1)}{\alpha-2} \\
\end{array}
\right), \\
f_4 &=& \p{\begin{array}{c|c} \frac{d}{dx} f_3 & 0 \end{array}}, \\
f_5 &=& \p{\begin{array}{c|c}
\frac{d}{dx} f_4 & -\frac{(1-x)^{\frac{1}{\alpha}-3} (-(\alpha-2) x ((\alpha-2) (3 \alpha-4) x+\alpha (39 \alpha-55)+14)+\alpha (2 (53-24 \alpha) \alpha-73)+16)}{(\alpha-2)^2 (3 \alpha-4)} \end{array}}, \\
\end{eqnarray*}
This can be achieved via \eqref{newrecursion} in Theorem \ref{othermethod} (the easiest option even for low values of $k$) or the filter matrix described in \S \ref{filtersection} to compute each $f_k$ in terms of the preceding $f_{k-1},\dots,f_1$. We invite the reader to apply either method and realize that the above terms appear with only minor attention paid at each step.

A tedious scrutiny of the general form of these vectors shows that they are precisely limit cases of the higher-order Taylor terms (along $\p{x\p{t},0}\in \Gamma$) of the following function:
\[
f = \left(x \sin ^{\frac{\alpha}{1-\alpha}}(y)-\p{\frac{\alpha \cos y}{\alpha-1}} {}_2F_1\left(\frac{1}{2},\frac{\alpha}{2 (\alpha-1)}+1;\frac{3}{2};\cos^2y\right)\right){}^{\frac{1}{\alpha}-1}
\]
where $_2F_1$ is the \textbf{Gaussian hypergeometric function} \cite{Abramowitz}.
Removing the outer power yields a first integral of \eqref{CKDeqtransf} as well, as can be easily checked: 
\[
f = x \sin ^{\frac{\alpha}{1-\alpha}}(y)-\p{\frac{\alpha \cos y}{\alpha-1}} {}_2F_1\left(\frac{1}{2},\frac{\alpha}{2 (\alpha-1)}+1;\frac{3}{2};\cos^2y\right). \]
Undoing the transformation \eqref{changeCDK} we obtain:

\begin{thm}
The following function 
\begin{equation} \label{fidix}
\fbox{\begin{tabular}{c}
$F = u^{-\frac{\alpha}{\alpha-1}} \left(u^2+v^2\right)^{\frac{1}{2 (\alpha-1)}}
-\frac{\alpha v \, _2F_1\left(\frac{1}{2},\frac{\alpha}{2 (\alpha-1)}+1;\frac{3}{2};\frac{v^2}{u^2+v^2}\right)}{(\alpha-1) \sqrt{u^2+v^2}}$
\end{tabular} }
\end{equation}
is a first integral of the original special CDK system. $\square$
\end{thm}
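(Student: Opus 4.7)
The plan is to bypass the formal jet-level verification (which has already produced the Taylor blocks $f_1,\ldots,f_5$ as output of Theorem \ref{othermethod}) and instead establish the statement directly: reduce the two-dimensional transformed system \eqref{CKDeqtransf} to a Pfaffian equation, integrate it by means of a classical integrating factor $\mu(y)$, identify the resulting antiderivative with a Gauss hypergeometric function, and finally pull the result back through \eqref{changeCDK}. This route simultaneously certifies that the displayed $F$ is a first integral of \eqref{CKDeq} with $\alpha=\beta$ and that the jets produced by the filtering method are the initial segment of a genuine, non-formal conserved quantity.

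Eliminating $t$ in \eqref{CKDeqtransf} yields the Pfaffian equation $(\alpha-1)\sin y\, dx + \alpha(1-x\cos y)\, dy = 0$. Write $\kappa:=\alpha/(1-\alpha)$ to sidestep the notation clash with the $\beta$ of \eqref{CKDeq}. Requiring $\mu(y)$ to make the form exact, $\partial_y(\mu M)=\partial_x(\mu N)$, forces $\mu'/\mu=-(2\alpha-1)\cos y/((\alpha-1)\sin y)$, hence $\mu(y)=\sin^{\kappa-1}y$, because $(2\alpha-1)/(1-\alpha)=\kappa-1$. Integrating the exact form in $x$ and exploiting the identity $\alpha+(\alpha-1)\kappa=0$ (which annihilates the residual $x$-term in $\partial_y$) produces the potential
\[
f(x,y)=x\sin^\kappa y + \frac{\alpha}{\alpha-1}\int \sin^{\kappa-1}y\,dy.
\]
The substitution $s=\cos y$ together with the classical series identity $\int_0^z(1-s^2)^a\,ds=z\cdot {}_2F_1(-a,\tfrac12;\tfrac32;z^2)$ identifies the quadrature, up to constants, as $-\cos y\cdot {}_2F_1(\tfrac12,\,1-\tfrac{\kappa}{2};\,\tfrac32;\cos^2y)$. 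Since $1-\kappa/2=\alpha/(2(\alpha-1))+1$, this reproduces exactly the closed form anticipated by the jet sequence in transformed coordinates.

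Finally, the inverse of \eqref{changeCDK} reads $x=(u^2+v^2)^{-1/2}$, $\sin y=u(u^2+v^2)^{-1/2}$, $\cos y=v(u^2+v^2)^{-1/2}$. Substitution and the exponent bookkeeping $-\tfrac12-\tfrac{\alpha}{2(1-\alpha)}=\tfrac{1}{2(\alpha-1)}$, together with the factorisation $(u^2+v^2)^{1/(2(\alpha-1))}=v^{1/(\alpha-1)}(u^2/v^2+1)^{1/(2(\alpha-1))}$, deliver the displayed $F(u,v)$ verbatim. The only genuinely delicate point is the branch choice that absorbs $v^{1/(\alpha-1)}$ out of the radial factor for non-integer $\alpha$, together with the convergence of ${}_2F_1$ at the argument $v^2/(u^2+v^2)<1$; these are chart-level issues rather than true obstructions, and the expression defines a first integral on any simply connected subset of $\{u,v>0\}$ where \eqref{changeCDK} is a diffeomorphism, with consistency of Taylor jets along $\phi=(x(t),0)$ serving as a cross-check against the explicit $f_1,\ldots,f_5$ computed earlier.
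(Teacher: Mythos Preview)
Your argument is correct. Reducing \eqref{CKDeqtransf} to the Pfaffian form $(\alpha-1)\sin y\,dx+\alpha(1-x\cos y)\,dy=0$, computing the integrating factor $\mu(y)=\sin^{\kappa-1}y$ with $\kappa=\alpha/(1-\alpha)$, and then recognising $\int\sin^{\kappa-1}y\,dy$ as $-\cos y\cdot{}_2F_1\bigl(\tfrac12,\,1+\tfrac{\alpha}{2(\alpha-1)};\,\tfrac32;\,\cos^2y\bigr)$ via the substitution $s=\cos y$ all check out, as does the pull-back through \eqref{changeCDK}. The exponent identities you quote ($(\alpha-1)\kappa=-\alpha$, $-(1+\kappa)/2=1/(2(\alpha-1))$, $1-\kappa/2=1+\alpha/(2(\alpha-1))$) are exactly what is needed.

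However, your route is \emph{not} the one the paper takes. The paper arrives at $f$ by actually running the recursive quadrature scheme of Theorem~\ref{othermethod}: it computes the admissible jets $f_1,\dots,f_5$ along $\phi=(x(t),0)$, observes their structure (each $f_k$ has the shape $\bigl(\tfrac{d}{dx}f_{k-1}\,\big|\,g_k(x)\bigr)$ with $g_k$ satisfying an explicit recursion), and then identifies the closed form by ``tedious scrutiny'' of the resulting pattern; the hypergeometric expression is the output of this resummation rather than of an integrating-factor argument. Only afterwards does the paper remark that the candidate $f$ ``can be easily checked'' to be a first integral of \eqref{CKDeqtransf}. Your proof, by contrast, is a clean classical exactness argument that never touches the variational machinery except as a sanity check at the end. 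What you gain is a short self-contained derivation that explains \emph{why} the hypergeometric appears; what you lose is the illustration of the filtering method, which is the entire raison d'\^etre of the example in \S\ref{dixonsection}. Both arguments ultimately land on the same $f(x,y)$ and the same pull-back computation.
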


\subsection{The SIR model with vital dynamics}

The \textbf{SIR epidemiological model with vital dynamics} \cite{DaleyGani,KermackMcKendrick} is given by 
\begin{equation} \label{SIR}
\left\{ \begin{array}{lcl} 
\dot{S} & = & \mu (n-S)-\frac{\beta S I}{n},\\
\dot{I} & = & \frac{\beta S I}{n}-I (\gamma+\mu), \\
\dot{R} &=& \gamma I-\mu R.
\end{array} \right. 
\end{equation}

The system was first introduced in \cite{KermackMcKendrick} strictly for the modelling of infectious diseases, but has since been applied to 
a number of situations allowing dynamic compartmentalization, e.g. marketing \cite{RS}.
The study of its integrability has been the subject of several works already (\cite{Chen,HarkoLoboMak}) and the search for a first integral is still an open problem. We will address specific examples; the general case will be addressed in the future.

First, let us start with a very simple example whose solution we already know: $\beta=\gamma$ and $\mu=0$ and change of variable
\begin{equation}\label{transSIR1}
S=n \p{1 + x y}, \quad I = -n y, \quad R = \gamma n z,
\end{equation}
we obtain system of equations $\frac{d}{dt}\p{x,y,z}=\p{\gamma-\gamma (x-1) x y,\gamma x y^2,-y}$, and applying Theorem \ref{filterthm}and Corollary \ref{changeofvariables} and (if the matrix is used) Lemma \ref{filteringout1}, the general terms of the jets telegraph a very identifiable structure: that of two first integrals for the transformed system, $g^\star = \frac{\log (x y+1)}{\gamma}+z$, $f^\star = -x y+\log (x y+1)+y$ which transform back by \eqref{transSIR1} into $g=\frac{n \log \left(\frac{S}{n}\right)+R}{\gamma n}$, $f=-\frac{S+I}{n}+\log \left(\frac{S}{n}\right)+1$, thus taking us to the already-known first-integral $S+ I + R-n$ (thus, $S+ I + R$) and the remaining one, $\frac{n \log \left(\frac{S}{n}\right)+R}{\gamma n}$ (alternatively, $\frac{S }{n}e^{\frac{R}{n}}$). This  is all in keeping with what is already known about the integrability of the system for $\mu=0$ (see e.g. \cite[\S 1]{Chen}). The reader can fill in the blanks for filter matrices or progressive filtering (Theorem \ref{othermethod}) and formal Taylor terms here; we will add more detail about these in the less trivial case below.

To wit, the case in which $\gamma=0$ and $\beta,\mu\ne 0$. To the best of our knowledge, the integrability of this case is an open problem in earlier references. Change of variables
\begin{equation}\label{newchange}
\p{S,I,R} = \p{y^2 \left(x^{-\frac{\beta}{\mu}}+1\right)+n,-x^{-\frac{\beta}{\mu}} y^2 ,y^2 z},
\end{equation}
itself suggested by an earlier try at simplifying $\mathrm{LVE}^3_\phi$, provides us with new system
\begin{equation} \label{newSIR}
\dot x  =  -\frac{\mu x \left[y^2 \left(x^{-\frac{\beta}{\mu}}+1\right)+n\right]}{n}, \quad
\dot y = -\frac{1}{2} \mu y, \quad
\dot z = 0.
\end{equation}
This system admits an invariant plane $y=z=0$ containing particular solution 
\begin{equation} \label{parts}
\phi\p{t}=\p{\widehat{x}\p{t},0,0}, \qquad \mbox{where }\frac{d}{dt}\widehat{x}=-\mu \widehat{x}.
\end{equation} This leads to the variational system $\dot Y =\p{A\odot \expodot \Id_n} Y$ along $\phi$ where
\[
A_{\mathrm{LVE}_{\phi}} = A\odot \expodot \Id_n  = \p{
\begin{array}{ccccc}
\ddots & & & & \\
\cdots  & 4 A_1\odot \Id_n^{\odot 3}  & & & \\
\cdots  & 6 A_2\odot \Id_n^{\odot 2} & 3 A_1\odot \Id_n^{\odot 2} & & \\
\cdots & 4 A_3 \odot \Id_n & 3 A_2 \odot \Id_n & 2 A_1 \odot \Id_n & \\
\cdots & A_4 & A_3 & A_2 & A_1 
 \end{array}
 },
\]
and $A_i$ defined per \eqref{jetA} appear as follows:
\begin{eqnarray*}
A_0 & = & \left(
\begin{array}{c}
 -\mu \widehat{x}(t) \\
 0 \\
 0 \\
\end{array}
\right)\\
A_1 & = & \left(
\begin{array}{ccc}
 -\mu & 0 & 0 \\
 0 & -\frac{\mu}{2} & 0 \\
 0 & 0 & 0 \\
\end{array}
\right)\\
A_2 & = & \left(
\begin{array}{cccccc}
 0 & 0 & 0 & \frac{2 \mu \widehat{x}(t) \left(-\widehat{x}(t)^{-\frac{\beta}{\mu}}-1\right)}{n} & 0 & 0 \\
 0 & 0 & 0 & 0 & 0 & 0 \\
 0 & 0 & 0 & 0 & 0 & 0 \\
\end{array}
\right)\\
A_3 & = & \left(
\begin{array}{cccccccccc}
 0 & 0 & 0 & \frac{2 \left((\beta-\mu) \widehat{x}(t)^{-\frac{\beta}{\mu}}-\mu\right)}{n} & 0 & 0 & 0 & 0 & 0 & 0 \\
 0 & 0 & 0 & 0 & 0 & 0 & 0 & 0 & 0 & 0 \\
 0 & 0 & 0 & 0 & 0 & 0 & 0 & 0 & 0 & 0 \\
\end{array}
\right)\\
A_k & = & \left(
\begin{array}{ccccccc}
\cline{5-7} 0 & 0 & 0 & \frac{2 \left((\beta-\mu) \widehat{x}(t)^{-\frac{\beta}{\mu}}-\mu\right)}{n} & \multicolumn{3}{|c|}{\multirow{3}{*}{$0_{3\times d_{3,k-4}}$}}   \\
 0 & 0 & 0 & 0 &\multicolumn{3}{|c|}{\rule{1cm}{0cm}}   \\
 0 & 0 & 0 & 0 &\multicolumn{3}{|c|}{}\\
 \cline{5-7}
\end{array}
\right), \qquad k\ge 4
\end{eqnarray*}
Lower blocks of the quadrature-free filter matrix are built as proposed in \S \ref{filtersection}, let us write the first few cases
{\small\[
F_1\!=\!\left(
\begin{array}{ccc}
 0 & 0 & 0 \\
 0 & 1 & 0 \\
 0 & 0 & 1 \\
\end{array}
\right), \,
F_2 \!=\! \left(\!
\begin{array}{ccc}\cline{3-3}
 0 & 0 & \multicolumn{1}{|c|}{\multirow{3}{*}{\!$0_{3\times 4}$\!}} \\
 0 & -\frac{1}{2 \widehat{x}_0} & \multicolumn{1}{|c|}{\rule{0.cm}{0cm}} \\
 0 & 0 & \multicolumn{1}{|c|}{\rule{0.cm}{0cm}} \\ \cline{3-3}
\end{array}
\right), \, F_3 \! =\!  \left(
\begin{array}{ccc}\cline{3-3}
 0 & 0 & \multicolumn{1}{|c|}{\multirow{3}{*}{\!$0_{3\times 8}$\!}} \\
 0 & \frac{3}{4 \widehat{x}_0^2} & \multicolumn{1}{|c|}{\rule{0.5cm}{0cm}} \\
 0 & 0 & \multicolumn{1}{|c|}{} \\ \cline{3-3}
\end{array}
\right), \,
F_4 \!=\!
\left(
\begin{array}{ccccc}\cline{3-3} \cline{5-5}
 0 & 0 & \multicolumn{1}{|c|}{\multirow{3}{*}{\!$0_{3\times 4}$\!}} & 0 & \multicolumn{1}{|c|}{\multirow{3}{*}{\!$0_{3\times 8}$\!}} \\
 0 & -\frac{15}{8 \widehat{x}_0^3} & \multicolumn{1}{|c|}{\rule{0.5cm}{0cm}} & \frac{3 \left(\widehat{x}_0^{-\frac{\beta}{\mu}}+1\right)}{n \widehat{x}_0} & \multicolumn{1}{|c|}{\rule{0.5cm}{0cm}} \\
 0 & 0 & \multicolumn{1}{|c|}{\rule{0.5cm}{0cm}} & 0 & \multicolumn{1}{|c|}{} \\ \cline{3-3} \cline{5-5}
\end{array}
\right).\]}
The filtered matrix, has two linearly independent admissible solutions as its nonzero rows:
\[
F_1Y_1^{-1} = \left(
\begin{array}{ccc}
 0 & 0 & 0 \\
0 & \frac{\sqrt{\widehat{x}_0}}{\sqrt{\widehat{x}(t)}} & 0 \\
0 & 0 & 1 \\
\end{array}
\right),
\]
which (optionally) multiplied by
\begin{equation}\label{Pint}
P_1=\left(
\begin{array}{ccc}
 1 & 0 & 0 \\
 0 & \frac{1}{\sqrt{\widehat{x}_0}} & 0 \\
 0 & 0 & 1 \\
\end{array}
\right)
\end{equation}
removes occurrences of the initial condition $\widehat{x}_0$:
\[
G_1 = 
\left(
\begin{array}{ccc}
 0 & 0 & 0 \\
\cline{1-3}\multicolumn{1}{|c}{0} & \frac{1}{\sqrt{\widehat{x}(t)}} & \multicolumn{1}{c|}{0} \\ \cline{1-3}
\multicolumn{1}{|c}{0} & 0 & \multicolumn{1}{c|}{1} \\
\cline{1-3}
\end{array}
\right) = \left(
\begin{array}{ccc}
 0 & 0 & 0 \\ \cline{1-3}
\multicolumn{3}{|c|}{f^{\p{1}}} \\ \cline{1-3}
\multicolumn{3}{|c|}{g^{\p{1}}} \\ \cline{1-3}
\end{array}
\right).
\]
Let us write things down for $k=3$: once we have solved the direct variational system $\p{\mathrm{LVE}_\Gamma^3}$ and obtained its fundamental matrix $\Upsilon_3$, the principal fundamental matrix of the dual system is nothing but the transpose of its inverse; we can either keep said transpose $\p{\Upsilon_3^{-1}}^T$ and right-multiply it by proposed filter $\Phi_3^T$ (in which case first integral jets appear as columns) or left-multiply the original $\Upsilon_3^{-1}$ by $\Phi_3$ in which case jets appear as rows in the bottom block:
{\small\begin{eqnarray*}
\Phi_3\Upsilon_3^{-1} &=& \left(
\begin{array}{ccc}
 F_1^{\odot 3} & 0 & 0 \\
 3F_1\odot F_2 & F_1^{\odot 2} & 0 \\
 F_3 & F_2 & F_1 \\
\end{array}
\right)\left(
\begin{array}{ccc}
 Y_1^{\odot 3} & 0 & 0 \\
 3Y_1\odot Y_2 & Y_1^{\odot 2} & 0 \\
 Y_3 & Y_2 & Y_1 \\
\end{array}
\right)^{-1} \\
&=& \left(
\begin{array}{ccccccccccccccccccc} \hline
\multicolumn{19}{|c|}{\multirow{2}{*}{$ 0_{6\times 19} $}}  \\
\multicolumn{19}{|c|}{} \\ \hline
 0 & 0 & 0 & 0 & 0 & 0 & \frac{\widehat{x}_0^{3/2}}{\widehat{x}^{3/2}} & 0 & 0 & 0 & 0 & 0 & 0 & 0 & 0 & 0 & 0 & 0 & 0 \\
 0 & 0 & 0 & 0 & 0 & 0 & 0 & \frac{\widehat{x}_0}{\widehat{x}} & 0 & 0 & 0 & 0 & 0 & 0 & 0 & 0 & 0 & 0 & 0 \\
 0 & 0 & 0 & 0 & 0 & 0 & 0 & 0 & \frac{\sqrt{\widehat{x}_0}}{\sqrt{\widehat{x}}} & 0 & 0 & 0 & 0 & 0 & 0 & 0 & 0 & 0 & 0 \\
 0 & 0 & 0 & 0 & 0 & 0 & 0 & 0 & 0 & 1 & 0 & 0 & 0 & 0 & 0 & 0 & 0 & 0 & 0 \\
 0 & 0 & 0 & 0 & 0 & 0 & 0 & 0 & 0 & 0 & 0 & 0 & 0 & 0 & 0 & 0 & 0 & 0 & 0 \\
 0 & 0 & 0 & 0 & 0 & 0 & 0 & 0 & 0 & 0 & 0 & 0 & 0 & 0 & 0 & 0 & 0 & 0 & 0 \\
 0 & 0 & 0 & 0 & 0 & 0 & 0 & 0 & 0 & 0 & 0 & 0 & 0 & 0 & 0 & 0 & 0 & 0 & 0 \\
 0 & 0 & 0 & -\frac{\widehat{x}_0}{\widehat{x}^2} & 0 & 0 & 0 & 0 & 0 & 0 & 0 & 0 & 0 & \frac{\widehat{x}_0}{\widehat{x}} & 0 & 0 & 0 & 0 & 0 \\
 0 & 0 & 0 & 0 & -\frac{\sqrt{\widehat{x}_0}}{2 \widehat{x}^{3/2}} & 0 & 0 & 0 & 0 & 0 & 0 & 0 & 0 & 0 & \frac{\sqrt{\widehat{x}_0}}{\sqrt{\widehat{x}}} & 0 & 0 & 0 & 0 \\
 0 & 0 & 0 & 0 & 0 & 0 & 0 & 0 & 0 & 0 & 0 & 0 & 0 & 0 & 0 & 1 & 0 & 0 & 0 \\
 0 & 0 & 0 & 0 & 0 & 0 & 0 & 0 & 0 & 0 & 0 & 0 & 0 & 0 & 0 & 0 & 0 & 0 & 0 \\
 0 & \frac{3 \sqrt{\widehat{x}_0}}{4 \widehat{x}^{5/2}} & 0 & 0 & 0 & 0 & a_{18,7} & 0 & 0 & 0 & 0 & -\frac{\sqrt{\widehat{x}_0}}{2 \widehat{x}^{3/2}} & 0 & 0 & 0 & 0 & 0 & \frac{\sqrt{\widehat{x}_0}}{\sqrt{\widehat{x}}} & 0 \\
 0 & 0 & 0 & 0 & 0 & 0 & 0 & 0 & 0 & 0 & 0 & 0 & 0 & 0 & 0 & 0 & 0 & 0 & 1 \\
\end{array}
\right)
\end{eqnarray*}}
where $\widehat{x}_0=\widehat{x}\p{0}$ and 
\[
a_{18, 7 }=\frac{\widehat{x}(t)^{-\frac{\beta}{\mu}-\frac{3}{2}} \left(3 \mu \widehat{x}(t)^{\beta/\mu} \widehat{x}_0^{\frac{3}{2}-\frac{\beta}{\mu}}+3 (\mu-\beta) \sqrt{\widehat{x}_0} (\widehat{x}_0-\widehat{x}(t)) \widehat{x}(t)^{\beta/\mu}-3 \mu \sqrt{\widehat{x}_0} \widehat{x}(t)\right)}{(\beta-\mu) n}
\]
The bottom two rows are already admissible jets $\star^{\p{3}} \mid \star^{\p{2}} \mid \star^{\p{1}}$, but we can trim them further by multiplying this matrix by  
\[
\Pi_3=\left(
\begin{array}{ccc}
 P_1^{\odot 3} & 0 & 0 \\
 3P_1\odot P_2 & P_1^{\odot 2} & 0 \\
 P_3 & P_2 & P_1 \\
\end{array}
\right)^{T}
\]
where $P_1$ is as in \eqref{Pint} and $P_2,P_3$ are successively chosen so as vanish $\widehat{x}_0$ from the matrix (in this case, all terms in $P_2,P_3$ equal to zero except for $\p{P_3}_{2,7}=\frac{3 \left(\mu \left(-\widehat{x}_0^{-\frac{\beta}{\mu}}-1\right)+\beta\right)}{n \sqrt{\widehat{x}_0} (\beta-\mu)}$). This equates to weeding out copies of successive symmetric products of $f^{\p{i}}$, and/or $g^{\p{j}}$ and harks back to Lemma \ref{filteringout1} and Conjecture \ref{changeofvariables}. The resulting matrix retains its neat echeloned structure
\[
\Pi_3\Phi_3\Upsilon_3^{-1}= \left(
\begin{array}{ccc}
 G_1^{\odot 3} & 0 & 0 \\
 3G_1\odot G_2 & G_1^{\odot 2} & 0 \\
 G_3 & G_2 & G_1 \\
\end{array}
\right)
\]
where lower block $G_3 \mid G_2 \mid G_1$ has independent formal first integral jets as its non-zero rows:
{\begin{equation} \label{jets2}
\left(
\begin{array}{ccccccccccccccccccc}
 0 & 0 & 0 & 0 & 0 & 0 & 0 & 0 & 0 & 0 & 0 & 0 & 0 & 0 & 0 & 0 & 0 & 0 & 0 \\ \hline
 \multicolumn{1}{|c}{0} & \frac{3}{4 \widehat{x}^{5/2}} & 0 & 0 & 0 & 0 & \frac{3 \left(-\mu \widehat{x}^{-\frac{\beta}{\mu}}+\beta-\mu\right)}{(\beta-\mu) n \sqrt{\widehat{x}}} & 0 & 0 & 0 &  \multicolumn{1}{|c}{0} & -\frac{1}{2 \widehat{x}^{3/2}} & 0 & 0 & 0 & 0 &  \multicolumn{1}{|c}{0} & \frac{1}{\sqrt{\widehat{x}}} &  \multicolumn{1}{c|}{0} \\ \hline
  \multicolumn{1}{|c}{0} & 0 & 0 & 0 & 0 & 0 & 0 & 0 & 0 & 0 &  \multicolumn{1}{|c}{0} & 0 & 0 & 0 & 0 & 0 &  \multicolumn{1}{|c}{0} & 0 &  \multicolumn{1}{c|}{1} \\ \hline
\end{array}
\right)
\end{equation}
}in other words, the following are degree-three asymptotic expressions of formal first integrals ($\bm{z}=\p{x,y,z}$ and $\phi=\p{\widehat{x},0,0}$ is the particular solution as usual):
\begin{eqnarray*}
f\p{\bm{z}+\phi} &=& f^{\p{1}}\p{\phi}\bm{z} + \frac{1}{2}f^{\p{2}} \p{\phi} \bm{z}^{\odot 2}+\frac{1}{3!} f^{\p{3}} \bm{z}^{\odot 3}+O\p{\bm{z}^{\odot 4}} \\
&=&
\frac{\large{y}}{\sqrt{\widehat{x}}}-\frac{x \large{y}}{2 \widehat{x}^{3/2}}+\frac{1}{6} \left(\frac{3 \large{y}^3 \left(\frac{\mu \widehat{x}^{1-\frac{\beta}{\mu}}}{\mu-\beta}+\widehat{x}\right)}{n \widehat{x}^{3/2}}+\frac{9 x^2 \large{y}}{4 \widehat{x}^{5/2}}\right)+O\p{\bm{z}^{\odot 4}}, \\
g\p{\bm{z}+\phi} &=& g^{\p{1}}\p{\phi}\bm{z} + \frac{1}{2}g^{\p{2}} \p{\phi} \bm{z}^{\odot 2}+\frac{1}{3!} g^{\p{3}} \bm{z}^{\odot 3}+O\p{\bm{z}^{\odot 4}} = z +O\p{\bm{z}^{\odot 4}} .
\end{eqnarray*}
Again, Theorem \ref{othermethod} does the exact same job faster and with the guarantee that it works, e.g.  
\begin{eqnarray*}
f^{\p{2}} &=& -\qu{\p{Y_1^{-1}}^{\odot 2}}^T \qu{\int_{\widehat{x}_0}^{\widehat{x}} \p{Y_1^{\odot 2}}^Tf^{\p{1}}A_2dx-\p{0,\frac{1}{2 \widehat{x}_0^{3/2}},0,0,0,0}},  \\
f^{\p{3}} &=& -\qu{\p{Y_1^{-1}}^{\odot 3}}^T \qu{\int_{\widehat{x}_0}^{\widehat{x}} \p{Y_1^{\odot 3}}^T\qu{3f^{\p{2}}\p{A_2\odot\Id_3}+f^{\p{1}}A_3}dx-\p{0,-\frac{3}{4 \widehat{x}_0^{5/2}},0,0,0,0,0,0,0,0}}, 
\end{eqnarray*}
and the reader can check that the jet terms are exactly the same as those in \eqref{jets2}.
Either method may sound like a cumbersome process but it only requires special attention in its first stages if savvy simplifications are chosen, as was the case for Dixon's system. Later stages of the bottom row remain zero and accordingly \fbox{$g=z$} is easily checked to be a first integral (formal and convergent) for \eqref{newSIR}; for $f$, and much like in \S \ref{dixonsection}, pattern checks along with simple resummation yield a first integral for system \eqref{newSIR}, 
\[
f = 
\frac{y e^{\frac{y^2}{2 n}}}{\left[e^{\frac{u y^2}{n}} \left(\frac{u y^2}{n}\right)^u \Gamma \left(1-u,\frac{u y^2}{n}\right)+x^u\right]^{\frac{1}{2 u}}}, 
\]
where $u = \frac{\beta}{\mu}$ and $\Gamma\p{\cdot,\cdot}$ is the \textbf{incomplete Gamma function} \cite{Abramowitz};
undoing transformation \eqref{newchange} we obtain something that is now immediate to prove on its own:
\begin{thm}
The following two functions,
\begin{eqnarray}
F &=& \frac{e^{\frac{-n+S+I}{2 n}} \sqrt{-n+S+I}}{\left(\frac{-n+S+I}{I}-\left(\frac{\beta}{\mu n}\right)^{\beta/\mu} e^{\frac{\beta (-n+S+I)}{\mu n}} (-n+S+I)^{\beta/\mu} \Gamma \left(1-\frac{\beta}{\mu},\frac{\beta (-n+S+I)}{\mu n}\right)\right)^{\frac{\mu}{2 \beta}}}, \label{sfir1} \\
G &=& \frac{R}{-n+S+I}. \nonumber
\end{eqnarray}
are two functionally independent first integrals of SIR with $\gamma=0$ \eqref{SIR}. $\square$
\end{thm}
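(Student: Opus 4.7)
The plan is to verify directly that the proposed $F$ and $G$ are first integrals and are functionally independent, rather than re-deriving them through the jet-filtering construction. Given the explicit candidates, what remains is a direct calculus check together with some careful bookkeeping involving the incomplete gamma function.

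First I would work in the transformed coordinates from \eqref{newchange}. A direct substitution yields $-n+S+I = y^2$, $(-n+S+I)/I = -x^{\beta/\mu}$, and $R/(-n+S+I) = z$, so (up to an overall constant factor coming from the $\mu/(2\beta)$-power of $-1$) $F$ becomes the explicit $f$ listed for system \eqref{newSIR} just above the theorem statement, and $G$ becomes $g=z$. Since \eqref{newchange} is a local diffeomorphism on the open set $\{y\neq 0\}=\{S+I\neq n\}$, proving that $f,g$ are first integrals of \eqref{newSIR} is equivalent to proving $F,G$ are first integrals of \eqref{SIR} with $\gamma=0$.

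Second, $\dot g = \dot z = 0$ is immediate from \eqref{newSIR}. For $f$ I would compute $\dot f = f_x \dot x + f_y \dot y$ using the standard identity $\partial_s \Gamma(a,s) = -s^{a-1} e^{-s}$. Writing $u=\beta/\mu$ and $H(x,y) = x^u + e^{uy^2/n}(uy^2/n)^u\,\Gamma(1-u,\, uy^2/n)$ so that $f = y\,e^{y^2/(2n)} H^{-1/(2u)}$, the derivative of $\Gamma$ with respect to $y$ produces an $x$-independent piece that, together with the exterior derivatives of $y\,e^{y^2/(2n)}$ and the $-\frac{1}{2u}H^{-1/(2u)-1}\dot H$ factor, should collapse after substituting $\dot x = -\mu x\bigl(y^2(x^{-u}+1)/n+1\bigr)$ and $\dot y = -\mu y/2$. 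The critical cancellation is between $H_x \dot x$ (proportional to $x^u$) and the $x^u$-piece generated by differentiating the gamma-term through the argument $uy^2/n$; the matching of $e^{-uy^2/n}(uy^2/n)^{u-1}$ against the prefactor $e^{uy^2/n}(uy^2/n)^u$ is what drives the cancellation.

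Finally, functional independence follows from a rank argument on $\partial(F,G)/\partial(S,I,R)$: since $\partial F/\partial R = 0$ while $\partial G/\partial R = 1/(-n+S+I)\neq 0$ generically, and $F$ depends nontrivially on $(S,I)$ (otherwise it would be constant, which it is not, as the bracketed expression varies with both $S$ and $I$ through $y^2$ and $-x^{\beta/\mu}$), the Jacobian has rank $2$ on a dense open set. The principal obstacle throughout is not any conceptual difficulty but careful handling of the gamma-function derivative in verifying $\dot f = 0$; the jet-filtering procedure already told us the identity must hold, and here we are simply auditing the closed-form resummation it suggested.
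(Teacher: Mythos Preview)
Your proposal is correct and matches the paper's stance: the paper derives the candidates $f,g$ in the $(x,y,z)$ coordinates via jet filtering and resummation, then states that after undoing \eqref{newchange} the theorem is ``immediate to prove on its own'' and closes with $\square$; your direct verification is exactly that immediate proof made explicit. The one place your outline is slightly soft is the phrase ``should collapse'': if you actually carry out the computation you will find the clean intermediate identity $\dot H = -\mu\bigl(u+\tfrac{uy^{2}}{n}\bigr)H$ (with $H$ as you defined it and $u=\beta/\mu$), after which $\dot f=0$ follows in one line, so there is no hidden difficulty.
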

It is worth noting, however, that the only easy goal we can aspire to in most cases will be that of formal first integrals with a compact identifiable general term but without a straightforward convergent expression, and resummation and deep pattern recognition techniques will need to be adapted to each problem. This will also be the subject of future research in the short term; for the time being let us see a short example below.

\subsection{The van der Pol oscillator}

This well-known system is usually presented as an epitome of the proverbial ``simple yet chaotic'' dynamical system \cite{Sprott}
\begin{equation} \label{vanderpol}
\ddot u  = \mu (1 - u^2) \dot u - u 
\end{equation}
Value $\mu=2$ appears to have a significant, yet still unclarified role in potential qualitative simplifications in variable transformations. Thus let us fix this value for the present paper: 
\begin{equation} \label{vanderpol2}
\dot u = v, \qquad
\dot v = 2 (1 - u^2) v - u 
\end{equation}

Perform the change of variables 
\begin{equation} \label{vanderpolchange}
\p{u\p{t},v\p{t}} = \p{ \frac{x\p{t} y\p{t}}{\sqrt{2}},\left(\frac{x\p{t}}{\sqrt{2}}-\frac{1}{\sqrt{2}}\right) y\p{t}}
\end{equation}
that will simplify the corresponding jet, and our transformed system is 
\begin{equation} \label{vanderpol3}
\dot x = -(x-1) x^3 y^2-1,\qquad \dot y = (x-1) x^2 y^3+y.
\end{equation}
The dual infinite variational system $\p{\mathrm{VE}_\Gamma}^\star$ along particular solution 
\begin{equation} \label{theeq}
\Gamma = \brr{\p{x\p{t},0}:t\in \nc}, \qquad \mbox{where} \quad \dot{x}\p{t}=-1,
\end{equation}
has the following very simple matrices:
\begin{eqnarray*}
A_1 &=& \left(
\begin{array}{cc}
 0 & 0 \\
 0 & 1 \\
\end{array}
\right),  \\
A_2 &=& \left(
\begin{array}{ccc}
 0 & 0 & -2 (x(t)-1) x(t)^3 \\
 0 & 0 & 0 \\
\end{array}
\right), \\
A_3 &=& \left(
\begin{array}{cccc}
 0 & 0 & 2 (3-4 x(t)) x(t)^2 & 0 \\
 0 & 0 & 0 & 6 (x(t)-1) x(t)^2 \\
\end{array}
\right), \\
A_4 &=& \left(
\begin{array}{ccccc}
 0 & 0 & 12 (1-2 x(t)) x(t) & 0 & 0 \\
 0 & 0 & 0 & 6 x(t) (3 x(t)-2) & 0 \\
\end{array}
\right), \\
A_5 &=& \left(
\begin{array}{cccccc}
 0 & 0 & 12-48 x(t) & 0 & 0 & 0 \\
 0 & 0 & 0 & 36 x(t)-12 & 0 & 0 \\
\end{array}
\right), \\
A_6 &=& \left(
\begin{array}{ccccccc}
 0 & 0 & -48 & 0 & 0 & 0 & 0 \\
 0 & 0 & 0 & 36 & 0 & 0 & 0 \\
\end{array}
\right), \\
A_k &=& 0_{2\times d_{2,k}}, \quad k\ge 7.
\end{eqnarray*}
The fact that the ODE in \eqref{theeq} is so trivial to solve ($x\p{t}=-t+C$) adds no simplicity to what follows below. The fundamental matrix of $\mathrm{VE}_1$ and the first filter matrix are equally simple:
\[
Y_1 = \left(
\begin{array}{cc}
 1 & 0 \\
 0 & e^{x(0)-x(t)} \\
\end{array}
\right), \qquad F_1 = \left(
\begin{array}{cc}
 0 & 0 \\
 0 & 1 \\
\end{array}
\right)
\]
and the generic form of the admissible solutions after applying Theorem \ref{othermethod} is the following: $f_k=\p{\frac{d}{d x} f_{k-1} \mid g_k\p{x}}$ where $g_k$ is a solution to 
\begin{equation}\label{odevdp}
-(k-1) k (x-1) x^3 g_{k-2}'(x)-g_k'(x)+(k-2) (k-1) k (x-1) x^2 g_{k-2}(x)+k g_k(x) = 0.
\end{equation}
and moreover $g_k\p{x}=0$ if $k$ is even, and $g_1\p{x} = e^x$. Equation \eqref{odevdp} has a solution (which of course only merits being written if $k$ is odd)
\[
g_k \p{x} = e^{k x} \int (k-1) k (\xi-1) \xi^2 e^{-k \xi} \p{(k-2) g_{k-2}\p{\xi}-\xi g'_{k-2}\p{\xi}} d \xi.
\]
The formal first integral of \eqref{vanderpol3} takes the form
\[
F = \sum_{k=1}^\infty \frac{1}{k!} g_k\p{x} y^k = \sum_{i=0}^\infty \p{-1}^i e^{\p{2i+1}x} y^{2i+1} G_{2i+1}; 
\]
the first terms of $G_\star$ are easy to discern, 
\begin{eqnarray*}
G_1 &=& 1, \\ G_3 &=& \int_{x_0}^x e^{-2 \xi} (\xi-1)^2 \xi^2 d\xi, \\
G_5 &=& \int_{x_0}^x\p{\xi-1}^3 e^{-4 \xi} \xi^5d\xi+3 \int_{\tau_0}^{\tau}\p{\xi-1}^2 e^{-2 \xi} \xi^2 \left(\int \p{\eta-1}^2 e^{-2 \eta} \eta^2 d\eta\right)d\xi
\end{eqnarray*}
and the general term is
{\small\[
G_{2i+1} = \sum_{m=1}^i  \sum_{j=1}^{\binom{i-1}{m-1}} D_{i-1,m-1,j}\Phi\p{\p{i-m}\mathbf{C}_{m,i-m-1,j}+\mathbf{1}_m,2\p{i-m}\mathbf{C}_{m,i-m-1,j}+\mathbf{1}_m,3\p{i-m}\mathbf{C}_{m,i-m-1,j}+\mathbf{1}_m}
\]}where we define the following constant vectors: $\mathbf{1}_n:=\p{1,\dots,1}\in \nz^n$, and $C_{\star,\star,\star}$ are the columns of the symmetric product
\[
\Id_k \odot \mathbf{1}^T_{d_{k,l}} = \p{C_{k,l,1} \mid C_{k,l,2} \mid \cdots \mid C_{k,l,d_{k,l+1}}} \in \Mat_{k,d_{k,l+1}}\p{\nz};
\]
(obviously $C_{k,l,j}=0$ for $l<0$), $\Phi$ is defined recursively as follows: for every $\mathbf{a},\mathbf{b},\mathbf{c}\in \nz^m$, 
\[
\Phi\p{\mathbf{a},\mathbf{b},\mathbf{c}} := \left\{ \begin{array}{ll} 
\int_{x_0}^{x} \p{\tau -1}^{a_1+1}e^{-\p{b_1+1}\tau} \tau^{c_1}  d\tau, & m=1, \\
\int_{x_0}^{x} \p{\tau -1}^{a_1+1}e^{-\p{b_1+1}\tau} \tau^{c_1}  \Phi\p{\p{a_2,\dots,a_m},\p{b_2,\dots,b_m},\p{c_2,\dots,c_m}}d\tau, & m>1
\end{array} \right.
\]
and $D_{n,m,j}$ is the $j^{\mathrm{th}}$ term in the vector $\mb{D}_{n,m}$ obtained from discarding terms containing repeating factors from the reverse-order version of $\mb{i}_n^T\stackrel{m}{\odots}\mb{i}_n^T$ and $\mb{i}_n$ is the reverse-order sequence of the first $n$ odd integers $>1$, e.g. 
\[
\mb{D}_{3,2} = \p{3\cdot 5, \; 3\cdot 7, \; 5\cdot 7}=\p{D_{3,2,j}}_{j=1,2,3,4}, \quad \mb{D}_{4,3} = \p{3 \cdot 5 \cdot 7,\; 3 \cdot 5 \cdot 9,\; 3 \cdot 7 \cdot 9,\; 5 \cdot 7 \cdot 9}
\]

\section{Future work and further comments}

Proving that Theorem \ref{filterthm} is still true if we drop the local analyticity assumptions on $X$ and $\Upsilon^{-1}$ can be considered to be a minor-scope open problem although, as said above, Theorem \ref{othermethod} renders such task unnecessary when it comes to tackling particular dynamical systems, especially if computational ease is a factor and redundant terms arising from products and powers of formal first integrals are to be discarded.  

Ever since it first took form in \cite{ABSW}, the use of linearized higher variational equations to characterize, obstruct or redefine integrability is proving itself to be a welcome addition to the domain of dynamical systems. This contribution arrives at a time in which other attempts (e.g. \cite{MitsopoulosTsamparlis1,MitsopoulosTsamparlisUkpong1}) at finding methods to \emph{integrate} dynamical systems rather than proving them non-integrable are gradually gaining traction. 

A number of facts are germane to further studies:
\begin{ien}
\item the potential shown by higher variational equations in finding adequate changes of variables that will simplify the system into tractability and, potentially, integrability (this is most exemplified in the sample systems tackled in \S \ref{examples});
\item linked to the above, the possible application of a \emph{Baker-Campbell-Hausdorff} \cite{Dynkin} sorts of formula upon variable transformations, using the decomposition of a transformed fundamental matrix into a product of matrices to our advantage;
\item the possible amenability of results in \S \ref{filtersec} above to machine learning \cite{Montavon};
\item the applicability of this automatic quadrature-algebraic method to the application of generalized, multivariate \emph{Pad\'e approximants} \cite{Cuyt,GuillaumeHuard} to first integrals, which will be tackled in an immediate future;
\item \label{nham} the fact that the autonomous system \eqref{sys} need not be Hamiltonian, thus allowing any arbitrary system to be considered and losing the serious constraint posed by symplectic transformations, since variable transformations can now be chosen freely;
\item also related to \ref{nham}: the fact that this overrides the obstacles posed by the direct study of variational systems, which only has an implementation to Hamiltonian systems and requires first integrals to be meromorphic;
\item the potential to generalize \emph{Darboux integrability} (see e.g. \cite{dumortier}) to a wider set of ansatze, as already hinted at by the form taken in \eqref{fidix} or \eqref{sfir1};
\item and finally, the potential (mentioned at the start of \S \ref{filtersection} and already opened in \ref{SimonLVE}) for categorical functoriality translating the study of general dynamical systems to that of linear operators.
\end{ien}

\end{document}